\theoremstyle{plain}
\newtheorem{thm}{Theorem}[section]
\newtheorem{theorem}[thm]{Theorem}
\newtheorem*{theorem*}{Theorem}
\newtheorem{lemma}[thm]{Lemma}
\newtheorem*{lemma*}{Lemma}
\newtheorem{corollary}[thm]{Corollary}
\newtheorem{proposition}[thm]{Proposition}
\newtheorem*{proposition*}{Proposition}
\newtheorem*{conjecture*}{Conjecture}
\newtheorem*{question*}{Question}
\newtheorem*{fact*}{Fact}
\theoremstyle{definition}
\newtheorem{definition}[thm]{Definition}
\newtheorem*{conventions*}{Conventions}
\newtheorem*{acknowledgements*}{Acknowledgements}
\newtheorem{example}[thm]{Example}
\newtheorem{assumption}[thm]{Assumption}
\newtheorem{construction}[thm]{Construction}
\newtheorem{remark}[thm]{Remark}
\DeclareMathOperator{\co}{co}
\DeclareMathOperator{\Frac}{Frac}
\DeclareMathOperator{\trdeg}{trdeg}
\DeclareMathOperator{\can}{can}
\DeclareMathOperator{\id}{id}
\DeclareMathOperator{\Spec}{Spec}
\DeclareMathOperator{\spl}{sp}
\newcommand{\bphi}{\boldsymbol{\Phi}}
\DeclareMathOperator{\Gal}{Gal}
\newcommand{\Dgal}{\Gal^{\delta}}
\DeclareMathOperator{\Ind}{Ind}
\DeclareMathOperator{\M}{M}
\DeclareMathOperator{\PGL}{PGL}
\DeclareMathOperator{\GL}{GL}
\DeclareMathOperator{\Mor}{Mor}
\DeclareMathOperator{\Hom}{Hom}
\DeclareMathOperator{\Alg}{Algs}
\DeclareMathOperator{\Aut}{Aut}
\newcommand{\Daut}{\Aut^\delta}
\DeclareMathOperator{\UDaut}{\underline{Aut}^\delta}
\DeclareMathOperator{\PV}{PV}
\DeclareMathOperator{\Object}{-Object}
\newcommand{\difftors}[1]{\delta\operatorname{\mathbf{-}}#1\operatorname{\mathbf{-torsor}}}
\DeclareMathOperator{\diffmod}{\delta\mathbf{-mod}}
\DeclareMathOperator{\DCSA}{\delta\mathbf{-CSA}}
\DeclareMathOperator{\TF}{TF}
\DeclareMathOperator{\tors}{\mathbf{-torsor}}
\title{Cohomology for Picard-Vessiot Theory}
\author{Man Cheung Tsui and Yidi Wang}
\subjclass[2020]{12H05, 16T05, 20G10, 16H05}
\keywords{Picard-Vessiot theory, differential algebra, Hopf algebras, descent, algebraic groups and torsors, differential central simple algebras}
\begin{document}
\maketitle

\begin{abstract}
We introduce a cohomology theory that classifies differential objects that arise from Picard-Vessiot theory, using differential Hopf-Galois descent. To do this, we provide an explicit description of Picard-Vessiot theory in terms of differential torsors. We then use this cohomology to give a bijective correspondence between differential objects and differential torsors. As an application, we prove a universal bound for the differential splitting degree of differential central simple algebras.
\end{abstract}

\section*{Introduction}
\label{section:introduction}
\addcontentsline{toc}{section}{Introduction}

Picard-Vessiot theory studies algebraic relations between solutions of homogeneous linear differential equations via Picard-Vessiot rings, which are analogues to Galois extensions from algebra. 
While various aspects of algebra have been effectively studied under the framework of torsors and Galois cohomology, Picard-Vessiot theory has primarily been studied by treating the algebraic and differential aspects rather separately.
The next paragraphs discuss the two things this paper does to bring the differential theory closer in line with the algebraic theory. 

For the rest of this paper, ``$\delta$'' will serve both as our shorthand for the word ``differential'' and our notation for derivations. We will also fix a $\delta$-field $(F,\delta)$, i.e., a field $F$ equipped with a derivation $\delta:F\to F$. We will assume that $F$ is of characteristic zero, and that its field of constants $C$ is algebraically closed. For instance, the reader can take $(F,\delta)$ to be the complex function field $\mathbb C(x)$ with the derivation $d/dx$.

The first objective of this paper is to better describe Picard-Vessiot theory using $\delta$-torsors. In general, a Picard-Vessiot ring of an $n\times n$ matrix differential equation $y'=Ay$ over $F$ is constructed as follows.
\begin{enumerate}
    \item\label{PV-construction-1} Define the $\delta$-ring $R\coloneqq F[X_{ij}, \det(X_{ij})^{-1}\mid 1\le i,j\le n]$ with derivation $\delta(X_{ij}) = A(X_{ij})$.
    \item\label{PV-construction-2} Let $\mathfrak{m}$ be a maximal $\delta$-ideal of $R$. Then $R/\mathfrak{m}$ is a Picard-Vessiot ring for $y'=Ay$.
\end{enumerate}
In the classical theory, Kolchin had shown that the spectrum of a Picard-Vessiot ring is a torsor for its $\delta$-Galois group (see e.g., \cite{kolchin1973differential} or \cite[Theorem 1.29]{vdPS03}). In \cite{BHHW18}, it was further shown that such a torsor is a $\delta$-torsor.
In \Cref{section:differential_torsors}, we give refinements of statements in \cite{BHHW18}. Namely, we strengthen Step \ref{PV-construction-1} into a bijective correspondence between $\delta$-isomorphism class of $\Spec(R)$ and the original equation $y' = Ay$ in \Cref{prop:cohomology_gln}. 
We then generalize Step \ref{PV-construction-2} to arbitrary $\delta$-torsors in \Cref{thm:comprehensive_induced_torsor}.
As a consequence, we show that any $\delta$-torsor arises from a unique Picard-Vessiot ring up to isomorphism.

The second objective of this paper is to construct a cohomology theory for Picard-Vessiot theory that plays a role similar to Galois cohomology in Galois theory.
We will use this cohomology to classify various differential objects that arise in Picard-Vessiot theory.
Namely, we give a bijective correspondence between certain objects ($\bphi$-objects) and $\delta$-torsors of linear algebraic groups. 
To establish this correspondence, we introduce a cohomology theory using $\delta$-Hopf-Galois descent. The $1$-cocycles in this cohomology $H^{1}_{\delta}(\Gamma, G)$ are morphisms $\Gamma\to G$ of varieties over $C$, where $\Gamma$ and $G$ are linear algebraic groups over $C$.

Fix a Picard-Vessiot extension $K/F$ and a $\bphi$-object $M$ over $F$ (e.g., $\delta$-module, $\delta$-torsor, $\delta$-central simple algebra). Here a $\bphi$-object refers to an object of type $\bphi$; see \Cref{subsection:phi-structure} for the exact definition. We will classify the set $\TF(K/F,M)$ of twisted forms of a $\bphi$-object $M$, i.e., $\bphi$-objects $N$ over $F$ that become isomorphic to $M$ over $K$ by the above cohomology via the following theorem. 

\begin{theorem*}[\Cref{thm:tf-over-fields}]
     Let $K/F$ be a Picard-Vessiot extension with $\delta$-Galois group $\Gamma$. Let $M$ be a $\bphi$-object over $F$. Suppose that the group functor
     \[
        \underline{\operatorname{Aut}}^{\delta, \bphi}(M)\colon (C\operatorname{-Algebras}) \to (\operatorname{Groups})\colon
        D \mapsto \operatorname{Aut}^{\delta, \bphi}_{F \otimes_C D}(M\otimes_C D)
     \]
     is represented by a linear algebraic group $G$ over $C$. Here the superscript $\bphi$ indicates that the automorphisms preserve the type $\bphi$. Then the two sets $\TF(K/F,M)$ and $H^{1}_{\delta}(\Gamma,G)$ are in bijection. 
\end{theorem*}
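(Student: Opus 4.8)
The plan is to run differential Hopf--Galois descent along $K/F$, with $G$ playing the role of the automorphism sheaf of $M$. The structural inputs I would use are, first, the results of \Cref{section:differential_torsors} identifying $\Spec(K\otimes_F K)$ with the trivial $\Gamma$-torsor $\Spec K\times_C\Gamma$ over $\Spec K$ --- equivalently, a $\delta$-$K$-algebra isomorphism $\kappa\colon K\otimes_F K\xrightarrow{\ \sim\ }K\otimes_C C[\Gamma]$ carrying the two factor inclusions $K\to K\otimes_F K$ to $x\mapsto x\otimes 1$ and to the comodule coaction, respectively --- together with the analogous identification of the triple product $K\otimes_F K\otimes_F K$ with $K\otimes_C C[\Gamma]\otimes_C C[\Gamma]$ intertwining the three double-product inclusions with the comultiplication of $C[\Gamma]$; and second, that $K/F$, being a field extension, is faithfully flat, so faithfully flat descent of modules is effective. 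I would also use freely that the constant field of $K$ is $C$, so that the $\delta$-constants of $K\otimes_F K$ are exactly $C[\Gamma]$.

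To build the map $\TF(K/F,M)\to H^1_\delta(\Gamma,G)$: given a twisted form $N$, I would choose a $\delta$-$\bphi$-isomorphism $f\colon M\otimes_F K\xrightarrow{\ \sim\ }N\otimes_F K$ over $K$, base change it along the two inclusions $K\rightrightarrows K\otimes_F K$ to get isomorphisms $f_1,f_2\colon M\otimes_F(K\otimes_F K)\to N\otimes_F(K\otimes_F K)$, and set $\theta_N\coloneqq f_2^{-1}\circ f_1$, a $\delta$-$\bphi$-automorphism of $M\otimes_F(K\otimes_F K)$. Transporting $\theta_N$ through $\kappa$ and using that it is $\delta$-linear while $C[\Gamma]$ lies in the constants, a constants argument in the spirit of the differential Galois correspondence should show that $\theta_N$ descends to an element of $\operatorname{Aut}^{\delta,\bphi}_{F\otimes_C C[\Gamma]}(M\otimes_C C[\Gamma])=G(C[\Gamma])$, i.e., to a morphism $c_N\colon\Gamma\to G$ of $C$-varieties. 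I would then check that the standard compatibility of $\theta_N$ on the triple product, rewritten via $\kappa$ and the comultiplication, is exactly the cocycle condition defining $H^1_\delta(\Gamma,G)$, and that replacing $f$ by $g\circ f$ with $g\in\operatorname{Aut}^{\delta,\bphi}_K(M\otimes_F K)$ alters $c_N$ by a coboundary; this makes $N\mapsto[c_N]$ well defined, and it visibly sends $M$ to the trivial class.

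For the inverse, I would take a cocycle $c\colon\Gamma\to G$, base change it through $\kappa^{-1}$ to a $\delta$-$\bphi$-automorphism $\theta_c$ of $M\otimes_F(K\otimes_F K)$ satisfying the cocycle identity on triple products --- that is, a descent datum on the $\delta$-$\bphi$-object $M\otimes_F K$ relative to $K/F$ --- and then descend: faithfully flat descent gives an $F$-module $N_c$ (realized as the equalizer of $\iota_1$ and $\theta_c\circ\iota_2$, where $\iota_1,\iota_2\colon M\otimes_F K\rightrightarrows M\otimes_F(K\otimes_F K)$ are the canonical inclusions) with $N_c\otimes_F K\cong M\otimes_F K$, and since $\theta_c$ is a $\bphi$-morphism every structure map packaged in $\bphi$ (the multiplication, the derivation, and whatever further data the type $\bphi$ records) is compatible with $\theta_c$ and hence descends, so $N_c$ is a $\bphi$-object over $F$ with $N_c\otimes_F K\cong M\otimes_F K$ as $\bphi$-objects; thus $N_c\in\TF(K/F,M)$. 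Finally I would verify that the two assignments are mutually inverse: the descent datum attached to $N$ is by construction $\theta_N$, so twisting by $c_N$ recovers $N$, and the cocycle read off from $N_c$ is $c$ by construction.

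The hardest part, I expect, will be pinning down exactly which features of the $\bphi$-formalism of \Cref{subsection:phi-structure} make this descent work: one needs that a $\bphi$-object over $F$ is the same as a $\bphi$-object over $K$ together with a descent datum through $\bphi$-morphisms --- so that once the automorphism data is in place, the $\bphi$-descent collapses to ordinary faithfully flat module descent --- and that the representability of $\underline{\operatorname{Aut}}^{\delta,\bphi}(M)$ by the algebraic group $G$ over $C$ is precisely what lets descent data be packaged as honest $C$-morphisms $\Gamma\to G$ rather than as unstructured gluing data over $K\otimes_F K$. The other delicate point is the descent of $\theta_N$ from $K\otimes_C C[\Gamma]$ down to $F\otimes_C C[\Gamma]$; this is where the differential structure is indispensable, and it should come from $C$ being the algebraically closed field of constants so that the ambient torsor trivializes over $C$-points. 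The remaining ingredients --- effectivity of module descent, translating triple-product compatibility into the cocycle identity, and the coboundary computation --- I expect to be routine.
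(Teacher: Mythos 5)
Your overall descent skeleton (\v{C}ech-style cocycle $\theta_N=f_2^{-1}\circ f_1$, cocycle condition from the triple product, inverse via an equalizer) is the right philosophy and matches the spirit of the paper's Constructions \ref{construction:F} and \ref{construction:G}. But the foundational input you rely on is false: there is in general \emph{no} isomorphism $\kappa\colon K\otimes_F K\xrightarrow{\sim}K\otimes_C C[\Gamma]$ when $\Gamma$ is positive-dimensional. The Torsor Theorem gives $K\otimes_F R\cong K\otimes_C C[\Gamma]$ (equivalently $R\otimes_F R\cong R\otimes_C C[\Gamma]$), where $R$ is the Picard--Vessiot \emph{ring} with $K=\Frac(R)$; the ring $K\otimes_F K$ is the further localization of $R\otimes_F R$ at $1\otimes(R\setminus\{0\})$ and is strictly larger. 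Concretely, for $K=F(t)$ with $t'=t$ and $\Gamma=\Gm$, under $K\otimes_F R\cong K[z,z^{-1}]$ with $z=t_2/t_1$, the element $1/(t_2-1)=1/(t_1z-1)$ lies in $K\otimes_F K$ but $t_1z-1$ is not a unit in $K[z,z^{-1}]$. This is fatal to the key step of your argument: the representability hypothesis only identifies $\Aut^{\delta,\bphi}$ over algebras of the form $(\text{base})\otimes_C D$ with $G(D)$, so it converts descent data on $R\otimes_F R\cong R\otimes_C C[\Gamma]$ into morphisms $\Gamma\to G$, but it says nothing about a $\delta$-automorphism of $M\otimes_F(K\otimes_F K)$, and the "constants argument'' you invoke to descend $\theta_N$ to $G(C[\Gamma])$ has no justification over this larger ring (note also that $M\otimes_F K$ is not assumed trivial, so you cannot reduce to constants there either).

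The paper avoids exactly this issue by a two-step route that your proposal omits. First it proves the bijection $\TF(R/F,M)\cong H^1_\delta(\Gamma,G)$ for the Picard--Vessiot ring $R$ itself (\Cref{thm:F-G-inverse}), using $\delta$-Hopf--Galois descent along $R/F$, where the canonical map $\can_R\colon R\otimes_F R\to R\otimes_C C[\Gamma]$ genuinely is an isomorphism and the representability assumption applies directly with $D=C[\Gamma]^{\otimes n}$. Second, it transfers from $R$ to $K$: since $R\hookrightarrow K$ is $\delta$-faithfully flat (\Cref{prop:pv-ff}, using simplicity of $R$), any isomorphism of twisted forms over $K$ restricts to one over $R$, so the natural map $\TF(R/F,M)\to\TF(K/F,M)$ is a bijection. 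To repair your proposal you would either have to reproduce this transfer step, or run your descent entirely over $R$ (which is faithfully flat over the field $F$, so ordinary descent is available) --- at which point you have essentially rewritten the paper's proof of \Cref{thm:F-G-inverse} in \v{C}ech form.
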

A similar correspondence was also considered by \cite{masuoka2021twisted} where our notion of twisted forms was classified in Amitsur cohomology, using faithfully flat descent.  One advantage of our cohomology theory is that it allows us to work directly with Picard-Vessiot rings, which are generally not faithfully flat. Furthermore, our cohomology does not require information about the Picard-Vessiot extension like Amitsur cohomology does and thereby enables us to directly work with the $\delta$-Galois groups. Recently, \cite{MP} also proved \Cref{thm:tf-over-fields} when the twisted forms are specifically $\delta$-torsors in a model-theoretic approach.

When $G$ is $\PGL_{n}$, \Cref{thm:tf-over-fields} establishes the bijective correspondence between $\delta$-central simple algebras of degree $n$ introduced in \cite{Juan_Magid08}, and $\delta$-$\PGL_{n,F}$-torsors; see \Cref{differential_pgln}. Consequently, we can address questions raised for $\delta$-central simple algebras, leveraging properties of $\delta$-$\PGL_{n,F}$-torsors. Previously, \cite{gupta2022splitting} proved that the $\delta$-splitting degree for $\delta$-quaternion algebras is at most three. Their approach involved explicitly solving the Riccati equations associated to $\delta$-quaternion algebras. However, the algebraic structure of general $\delta$-central simple algebras is much less explicit and therefore one cannot expect a nice associated differential equations for computation. Instead, we will use our new bijective correspondence with $\delta$-$\PGL_{n,F}$-torsors to prove the following universal bound on the $\delta$-splitting degree for general $\delta$-central simple algebras:

\begin{theorem*}[\Cref{cor:bound_DCSA}]
    Let $(A, \delta)$ be a $\delta$-central simple $F$-algebra of degree $n$. Then
    \begin{equation*}
        \deg_{\spl}^{\delta} (A, \delta) \leq \dim(\PGL_n) = n^2 - 1.
    \end{equation*} 
    This bound is optimal if $\PGL_n$ appears as a $\delta$-Galois group over the $\delta$-field $F$.
\end{theorem*}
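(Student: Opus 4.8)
The plan is to transport the statement from $\delta$-central simple algebras to $\delta$-$\PGL_{n,F}$-torsors via \Cref{differential_pgln}, and then to split the torsor by passing to the fraction field of a Picard-Vessiot ring that sits inside it. So let $(A,\delta)$ be a $\delta$-central simple $F$-algebra of degree $n$; it is a twisted form of $\Mat_n(F)$ with its canonical $\delta$-structure (the split $\delta$-central simple algebra of degree $n$), whose $\delta$-automorphism functor is represented by $\PGL_{n,C}$. By \Cref{differential_pgln} (the case $G=\PGL_n$ of \Cref{thm:tf-over-fields}) there is an associated $\delta$-$\PGL_{n,F}$-torsor $X=\Spec R$, and by construction $X\cong\underline{\operatorname{Isom}}^{\delta}_{F}(\Mat_n(F),A)$. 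Consequently, for any $\delta$-field extension $K/F$, a $K$-point of $X$ is the same datum as a $\delta$-algebra isomorphism $\Mat_n(K)\cong A\otimes_F K$; hence $A$ is split by $K$ if and only if $X(K)\neq\emptyset$, i.e.\ if and only if $X\times_F K$ is the trivial torsor.

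Next I would pick a maximal $\delta$-ideal $\mathfrak m\subseteq R$ and set $P\coloneqq R/\mathfrak m$ and $K\coloneqq\Frac(P)$. By \Cref{thm:comprehensive_induced_torsor} (the generalization to arbitrary $\delta$-torsors of the second step of the Picard-Vessiot construction), $P$ is a Picard-Vessiot ring over $F$ whose $\delta$-Galois group $H\coloneqq\Dgal(K/F)$ is a closed $C$-subgroup of $\PGL_n$, and $\Spec P$ is a sub-$\delta$-torsor of $X$. By the torsor property of Picard-Vessiot rings (Kolchin), $\trdeg_F K=\dim H\le\dim\PGL_n=n^2-1$. Moreover the composite $\Spec K\to\Spec P\hookrightarrow X$ is a $K$-point of $X$, so by the previous paragraph the Picard-Vessiot extension $K/F$ splits $(A,\delta)$. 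Therefore $\deg_{\spl}^{\delta}(A,\delta)\le\trdeg_F K\le n^2-1$.

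For optimality, suppose $\PGL_n\cong\Dgal(K_0/F)$ for some Picard-Vessiot extension $K_0/F$, with Picard-Vessiot ring $P_0\subseteq K_0$. Then $X_0\coloneqq\Spec P_0$ is a $\delta$-$\PGL_{n,F}$-torsor, and via \Cref{differential_pgln} it corresponds to a $\delta$-central simple $F$-algebra $A_0$ of degree $n$. By the bound just established, $\deg_{\spl}^{\delta}(A_0,\delta)\le n^2-1$; in fact $K_0$ itself splits $A_0$, since $X_0$ has the tautological $K_0$-point coming from $P_0\hookrightarrow K_0$. Conversely, let $K/F$ be any $\delta$-field extension with $A_0\otimes_F K\cong\Mat_n(K)$. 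Then $X_0(K)\neq\emptyset$, so there is an $F$-$\delta$-algebra homomorphism $P_0\to K$; since $P_0$ is $\delta$-simple and $K$ is a field, this homomorphism is injective, whence $K_0=\Frac(P_0)$ embeds into $K$ over $F$ and $\trdeg_F K\ge\trdeg_F K_0=\dim\PGL_n=n^2-1$. Taking the infimum over all splitting extensions yields $\deg_{\spl}^{\delta}(A_0,\delta)\ge n^2-1$, so equality holds.

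The substantive input is \Cref{differential_pgln} and \Cref{thm:comprehensive_induced_torsor}, which I am taking as given; the remaining point that needs care is the equivalence between ``$A$ is split by $K$'' and ``$X$ is trivial over $K$'' for a $\delta$-field extension $K/F$ whose constants need not be algebraically closed, so that \Cref{thm:tf-over-fields} does not apply over $K$ directly. This is best obtained not from the cohomological classification but from the construction itself: the identification $X\cong\underline{\operatorname{Isom}}^{\delta}_{F}(\Mat_n(F),A)$ base-changes to $X\times_F K\cong\underline{\operatorname{Isom}}^{\delta}_{K}(\Mat_n(K),A\otimes_F K)$, after which the equivalence is immediate because a $\delta$-$\PGL_n$-torsor over a field is trivial as soon as it acquires a rational point. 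The rest of the argument is routine.
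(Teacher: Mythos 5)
Your proposal is correct and follows essentially the same route as the paper: transport the algebra to a $\delta$-$\PGL_{n,F}$-torsor via \Cref{differential_pgln}, then bound the splitting degree by the dimension of the $\delta$-Galois group of a simple $\delta$-quotient of the torsor's coordinate ring, which is exactly the content of \Cref{prop:split_torsor} that you re-derive inline (including its minimality statement for the optimality claim). The one step you take on faith --- that the torsor attached to $(A,\delta)$ by the cohomological bijection is the differential Isom-scheme, so that a $\delta$-field extension splits the algebra precisely when the torsor acquires a $\delta$-rational point --- is likewise left implicit in the paper's own two-line proof, so it is not a gap relative to the paper's standard.
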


\bigskip

Here is how this manuscript is organized. \Cref{section:pv} reviews Picard-Vessiot theory. 
\Cref{section:differential_torsors} studies $\delta$-torsors as introduced in \cite{BHHW18}. 
\Cref{section:twisted-forms-descent} defines the twisted forms that we will use. 
\Cref{section:cohom} proves \Cref{thm:tf-over-fields} as part of the more general correspondence (\Cref{thm:F-G-inverse})
about $\delta$-Hopf-Galois extensions.
Then \Cref{prop:absolute-cohom} restates \Cref{section:cohom} in terms of absolute cohomology.
In \Cref{section:DCSA} we recall $\delta$-central simple algebras as introduced in \cite{Juan_Magid08} and answer a question of \cite{gupta2022splitting}.
\bigskip

\begin{acknowledgements*}
    Sections \ref{section:twisted-forms-descent} and \ref{section:cohom} contain results from the first author's thesis. The second author was partially supported by NSF grant DMS-2102987. Both authors thank David Harbater and Mark van Hoeij for helpful conversations, and Julia Hartmann for her patient advising and ample support.
\end{acknowledgements*}

\section{Picard-Vessiot theory}
\label{section:pv}

We first review Picard-Vessiot theory mainly following \cite[Chapter 1]{vdPS03}.

A \emph{$\delta$-ring} is a commutative ring $R$ equipped with a derivation $\delta\colon R\to R$, i.e., an additive map $\delta:R\to R$ satisfying $\delta(rs)=r\delta(r)s+r\delta(s)$ for all $r,s\in R$. 
Let $(R, \delta)$ be a $\delta$-ring.
The \emph{ring of constants} of $R$ is the subring $R^{\delta} \coloneqq \{r\in R\mid \delta(r) = 0\}$
of $R$. 
If $R^{\delta}$ is a field, we call $R^{\delta}$ the \emph{field of constants} of $R$. 
An ideal $I$ of $R$ is a \emph{$\delta$-ideal} if $\delta(I) \subseteq I$. If the only $\delta$-ideals of $R$ are $0$ and $R$, then $R$ is \emph{simple}.
A \emph{homomorphism} of $\delta$-rings $(R,\delta_R)$ and $(S,\delta_S)$ is a ring homomorphism $\phi:R\to S$ such that $\phi\circ \delta_R = \delta_S\circ \phi$. A $\delta$-ring $S$ is a \emph{simple $\delta$-quotient} of $R$ if $S$ is simple and $S$ is the image of a $\delta$-ring homomorphism $R\twoheadrightarrow S$.

A \emph{$\delta$-$R$-module} is an $R$-module $M$ equipped with a derivation $\delta:M\to M$, i.e., $\delta(rm) = r\delta(m)+\delta(r)\delta(m)$ for all $r\in R$, $m\in M$. 
Let $M$ be a $\delta$-$F$-module with $\dim_F M = n<\infty$. 
After a choice of $F$-basis, $M$ corresponds to an $n\times n$ matrix differential equation $y' = Ay$ with $A\in \M_n(F)$.
Two $\delta$-$F$-modules are isomorphic if and only if their corresponding equations $y'=Ay$ and $z'=Bz$ are \emph{gauge equivalent}, i.e., $B=P'P^{-1}+PAP^{-1}$ for some $P\in\GL_n(F)$. We call $M$ \emph{trivial} if $M\cong F\otimes_C M^{\delta}$ as $\delta$-$F$-modules, where $M^{\delta}\coloneqq \{m\in M\mid \delta(m) = 0\}$. In a suitable $F$-basis, a trivial $\delta$-module  corresponds to the equation $y'=0\cdot y$.

Let $(R,\delta_R)$ be a $\delta$-ring.  A $\delta$-$R$-algebra is a $\delta$-ring $(S,\delta_S)$ such that $S$ is an $R$-algebra and $\delta_S$ extends $\delta_R$. Concepts like \emph{$\delta$-$R$-coalgebras}, \emph{$\delta$-Hopf algebras}, and \emph{$\delta$-field extensions} are similarly defined as $R$-coalgebras, Hopf algebras, and field extensions with compatible derivations that commute with the structure ring homomorphisms.

A \emph{Picard-Vessiot ring} for a differential equation $y' = Ay$ over $F$ is a simple $\delta$-$F$-algebra satisfying the following conditions:    
\begin{enumerate}
    \item There exists a fundamental matrix $Y \in \GL_n(R)$ that satisfies $Y' = AY$;
    \item $R = F\left[Y_{ij}, \det(Y_{ij})^{-1}\mid 1\leq i, j\leq n\right]$;
    \item $R^{\delta} = C$.
\end{enumerate}
A \emph{Picard-Vessiot extension} $L$ for $y' = Ay$ is the fraction field of a Picard-Vessiot ring.  The field of constants of a Picard-Vessiot field is known to be $C$. Since we assume $C$ to be algebraically closed, a Picard-Vessiot ring for $y'=Ay$ over $F$ exists and is unique up to an isomorphism of $\delta$-$F$-algebras (\cite[Proposition 1.20]{vdPS03}).

Given a Picard-Vessiot ring $R/F$, the group functor
\[
\UDaut(R/F)\colon (C\operatorname{-Algebras}) \to (\operatorname{Groups})\colon 
D \mapsto \Daut_{F \otimes_C D}(R\otimes_C D)
\]
is represented by a linear algebraic group over $C$. Replacing $R$ by $\Frac(R)$ gives a group functor $\UDaut(\Frac(R)/F)$ represented by the same linear algebraic group. See \cite[Corollary 2.12]{dyckerhoff2008inverse}. The \emph{$\delta$-Galois group} for $R/F$, denoted by $\Dgal(R/F)$, is the linear algebraic group over $C$ representing $\UDaut(R/F)$.

Finally the \emph{Picard-Vessiot closure} $F^{\PV}$ of $F$ is the direct limit of Picard-Vessiot fields $K$ over $F$, filtered by inclusion. It exists and is unique up to an isomorphism of $\delta$-$F$-algebras; see \cite[Section 3]{magid2002picard}.

\section{Differential torsors}
\label{section:differential_torsors}

\subsection{Differential torsors}
\label{subsection:differential_torsors}
Following \cite{BHHW18}, we reformulate Picard-Vessiot extensions as $\delta$-torsors.

\begin{definition}
    Let $G$ be a linear algebraic group over $C$. Let $R$ be a $\delta$-ring containing $C$. A \emph{$G_R$-torsor} is an affine $R$-variety $X$ equipped with a simply transitive group action $\alpha \colon X \times G_R \to X$. A \emph{$\delta$-$G_R$-torsor} is a $G_F$-torsor $X$ equipped with a derivation $\delta\colon R[X]\to R[X]$ extending the derivation $\delta$ on $R$, such that $\rho \colon R[X] \otimes_R R[X] \to R[X] \otimes_R R[G_F]$ corresponding to the morphism $(\alpha, \operatorname{pr}_1) \colon X \times G_R \to X \times X$ is a $\delta$-$R$-algebra homomorphism. A $\delta$-torsor is \emph{simple} if $R[X]$ is a simple $\delta$-ring.
\end{definition}

Let $R/F$ be a Picard-Vessiot ring with $\delta$-Galois group $G$. 
Then $\Spec(R)$ is a $G_F$-torsor by the Torsor Theorem (\cite[Theorem 1.28]{vdPS03}) and a simple $\delta$-$G_F$-torsor by \cite[Proposition 1.12]{BHHW18}.

\begin{definition}
    Let $X$ be a $\delta$-$G_F$-torsor with derivation $\delta$ on $F[X]$. We call $X$ a \emph{trivial} $\delta$-torsor if $X$ is a trivial $G_F$-torsor and $F[X] = F \otimes_C F[X]^{\delta}$, where $F[X]^{\delta}$ denotes the constants of $F[X]$ under the derivation $\delta$.
\end{definition}

\begin{example}
\label{ex:diff_gln}
    By Hilbert's Theorem 90, any $\GL_{n,F}$-torsor $X$ is trivial, so $X$ can be identified with $\GL_{n,F}$ and its coordinate ring with
    \[
        F[X] = F[\GL_{n,F}] = F\left[X_{ij}, \det(X_{ij})^{-1}\mid 1\leq i, j\leq n\right].
    \]
    \cite[Examples 1.9 and 1.10]{BHHW18} further show that all the derivations that turn $X$ into a $\delta$-$\GL_{n,F}$-torsor are given by $\delta_A(X_{ij}) = A(X_{ij})$ for any $A \in \M_n(F)$.
    In particular, a trivial torsor can be a nontrivial $\delta$-torsor.
\end{example}

We now show that the above description of $\delta$-$\GL_n$-torsors respects isomorphisms. Let $\diffmod_{n}(F)$ be the set of isomorphism classes of $\delta$-$F$-modules of dimension $n$. We also let $\difftors{G_F}(F)$ to be the set of isomorphism classes of $\delta$-$G_{F}$-torsors.

\begin{proposition}\label{prop:cohomology_gln}
    The map 
    \[
    \mathcal{F}\colon \diffmod_n(F) \to \difftors{\GL_{n,F}}(F)
    \]
    taking the $\delta$-$F$-module for $y' = Ay$ to the $\delta$-$\GL_{n, F}$-torsor $Y$ with coordinate ring 
    \[
    F[Y] = F[Y_{ij}, \det(Y_{ij})^{-1}]\quad\text{and}\quad (Y_{ij})' = A(Y_{ij})
    \]
    is both functorial in $F$ and a bijection. In particular, when $A=0$, the map $\mathcal{F}$ takes the trivial $\delta$-$F$-module to the trivial $\delta$-$\GL_{n,F}$-torsor.
\end{proposition}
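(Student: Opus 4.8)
The plan is to check in turn that $\mathcal F$ is well defined on isomorphism classes, functorial in $F$, surjective, and injective, working throughout with the matrix presentations of $\delta$-modules and of coordinate rings of $\GL_n$-torsors. The only nontrivial external input is the classification recorded in \Cref{ex:diff_gln}: the derivations making the trivial $\GL_{n,F}$-torsor a $\delta$-torsor are exactly the $\delta_A$.

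\textbf{Well-definedness and functoriality.} I would first show that if $y'=Ay$ and $z'=Bz$ are gauge equivalent, say $B=P'P^{-1}+PAP^{-1}$ with $P\in\GL_n(F)$, then the $F$-algebra homomorphism $F[Z_{ij},\det^{-1}]\to F[Y_{ij},\det^{-1}]$ sending the universal matrix $(Z_{ij})$ to $P\cdot(Y_{ij})$ is a $\delta$-$\GL_{n,F}$-torsor isomorphism: it is $\GL_{n,F}$-equivariant because left multiplication by $P$ commutes with the right-translation torsor action, and it intertwines the derivations because $\big(P\cdot(Y_{ij})\big)' = P'\cdot(Y_{ij}) + P\cdot A(Y_{ij}) = B\cdot P\cdot(Y_{ij})$. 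Hence $\mathcal F$ descends to isomorphism classes. Functoriality in $F$ is then immediate: a $\delta$-field extension $F\hookrightarrow F'$ carries the class of $y'=Ay$ to the class of the same equation over $F'$ and the torsor $Y$ to $F'[Y_{ij},\det^{-1}]$ with the same derivation formula, so the relevant square commutes.

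\textbf{Bijectivity.} For surjectivity, let $X$ be a $\delta$-$\GL_{n,F}$-torsor. By Hilbert's Theorem 90, as in \Cref{ex:diff_gln}, $X$ is a trivial $\GL_{n,F}$-torsor, so a trivialization identifies $F[X]$ with $F[\GL_{n,F}] = F[X_{ij},\det(X_{ij})^{-1}]$ as $F$-algebras with $\GL_n$-coaction; by \Cref{ex:diff_gln} the transported derivation is $\delta_A$ for some $A\in\M_n(F)$, so $X$ is the image of the class of $y'=Ay$. For injectivity, suppose $\varphi\colon \mathcal F(y'=Ay)\xrightarrow{\sim}\mathcal F(z'=Bz)$ is an isomorphism of $\delta$-$\GL_{n,F}$-torsors. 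As a $\GL_{n,F}$-torsor automorphism of the trivial torsor it is left translation by some $P\in\GL_n(F)$, so on coordinate rings it sends $(Z_{ij})\mapsto P\cdot(Y_{ij})$; imposing $\delta$-compatibility and using $(Y_{ij})' = A(Y_{ij})$ forces $B\cdot P = P' + P\cdot A$, i.e.\ $B = P'P^{-1}+PAP^{-1}$, so $y'=Ay$ and $z'=Bz$ are gauge equivalent and define the same class in $\diffmod_n(F)$. Finally, for $A=0$ one has $\delta_0(X_{ij})=0$, hence $F[X] = F\otimes_C C[X_{ij},\det^{-1}] = F\otimes_C F[X]^\delta$; together with triviality of the underlying torsor this gives the ``in particular'' claim.

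\textbf{Main obstacle.} The difficulties are organizational rather than conceptual: fixing the torsor-action convention (right translation, so that $\GL_n$-equivariant self-maps of $\GL_{n,F}$ are left translations by $F$-points) and verifying that the $\delta$-homomorphism condition on coordinate rings reproduces the gauge-equivalence relation exactly. The one genuinely nontrivial fact — that every $\delta$-structure on the trivial $\GL_{n,F}$-torsor is some $\delta_A$ — is supplied by \cite{BHHW18} via \Cref{ex:diff_gln}, after which all four parts are short.
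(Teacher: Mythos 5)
Your proposal is correct and follows essentially the same route as the paper: both rely on \Cref{ex:diff_gln} for surjectivity, identify the torsors with the trivial $\GL_{n,F}$-torsor so that isomorphisms become left translations by $P\in\GL_n(F)$, and reduce well-definedness and injectivity to the same matrix computation showing $\delta$-compatibility is equivalent to $B = P'P^{-1}+PAP^{-1}$. The only difference is presentational: you separate the two implications, while the paper states them together as a single if-and-only-if claim.
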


\begin{proof}
    Functoriality is clear; surjectivity follows from \Cref{ex:diff_gln}. The well-definedness and injectivity of $\mathcal{F}$ follow from the following claim:\bigskip

    \emph{Let $A,B\in \M_n(F)$. Two differential equations $y'=Ay$ and $z'=Bz$ are gauge equivalent if and only if their corresponding $\delta$-$\GL_{n,F}$-torsors are isomorphic.}\bigskip

    To prove the claim, let $Y$ and $Z$ be $\delta$-$\GL_{n,F}$-torsors corresponding to $y'=Ay$ and $z'=Bz$. We identify $Y$ and $Z$ with $X=\GL_{n,F}$ as $\GL_{n,F}$-torsors. We also have derivations
    \[
    \delta_{A}(X_{ij}) = A(X_{ij})\text{ on }F[Y]=F[X]\qquad\text{and}\qquad \delta_{B}(X_{ij}) = B(X_{ij})\text{ on }F[Z]=F[X].
    \]
    Since an automorphism of $\GL_{n,F}$ as a (right) $\GL_{n,F}$-torsor is given by $\varphi:x\mapsto Px$ for some $P\in \GL_n(F)$, this $\varphi$ defines a $\delta$-$\GL_{n,F}$-torsor isomorphism between $Y$ and $Z$ if and only if 
    $\varphi^{\ast}\circ \delta_{B} = \delta_{A}\circ \varphi^{\ast}$. 
    But
    \[
    \delta_{A}\circ \varphi^{\ast} (X_{ij}) = \delta_A(P(X_{ij})) = P'(X_{ij}) + PA(X_{ij})
    \]
    and
    \[
    \varphi^{\ast}\circ \delta_{B} (X_{ij}) = \varphi^{\ast}(B(X_{ij})) = B \varphi^\ast (X_{ij}) = BP (X_{ij})
    \]
    are equal precisely when $B = P' P^{-1}+PAP^{-1}$, so the claim follows.
\end{proof}

\subsection{Induced differential torsors}
\label{subsection:induced_differential_torsors}

\begin{definition}
    Let $\psi \colon H \hookrightarrow G$ be an embedding of linear algebraic groups over $C$. Let $Y$ be a $\delta$-$H_F$-torsor. The \emph{induced $\delta$-$G_F$-torsor via $\psi$}, denoted $\Ind^{\psi}(Y)$, is defined as the spectrum of the ring $(F[Y] \otimes_C C[G])^H$ equipped with the derivation restricted from $F[Y] \otimes_C C[G]$ by viewing elements in $C[G]$ as constants. 
\end{definition}

The construct $\Ind^\psi(Y)$ is known to be a $\delta$-$G_{F}$-torsor. 
Moreover, the choice of $\psi$ uniquely determines the derivation on the coordinate ring of the induced $\delta$-torsor (see \cite[Proposition 1.8]{BHHW18}).

\begin{remark}\label{base_change_induced}
    Induction of $\delta$-torsors commutes with base change. To see this, let $L/F$ be a $\delta$-field extension for which $L^{\delta} = F^{\delta}$. Then \cite[Remark A.9(d)]{BHHW18} gives a canonical isomorphism $\Ind^{\psi}(X \times_F L) \cong \Ind^{\psi}(X) \times_F L$ as $G_L$-torsors. Finally the isomorphism $L[X] \otimes_C C[G] \cong F[X] \otimes_C C[G] \otimes_F L$ of $\delta$-rings restricts to $(L[X] \otimes_C C[G])^{H_L} \cong (F[X] \otimes_C C[G])^H \otimes_F L$, making $\Ind^{\psi}(X \times_F L) \cong \Ind^{\psi}(X) \times_F L$ an isomorphism of $\delta$-torsors.
\end{remark}

Induction of $\delta$-torsors also satisfies the following universal property.

\begin{proposition}\label{prop: universal}
    Let $\psi \colon H \hookrightarrow G$ be an embedding of linear algebraic groups over $C$. Let $Y$ be a $\delta$-$H_F$-torsor and let $Z$ be a $\delta$-$G_F$-torsor. Then there exists a unique derivation on $\Ind^{\psi}(Y)$ such that for every $\psi$-equivariant morphism of $\delta$-torsors $Y \to Z$, there exists a unique isomorphism $\varphi \colon \Ind^\psi(Y) \to Z$ of $\delta$-$G_F$-torsors making the following diagram commute.
    \[
        \begin{tikzcd}[sep=1.4em]
            Y \arrow{rr}{} \arrow{dr}{\alpha} &  & \Ind^{\psi}(Y) \arrow[dl, dashed, "\varphi"]\\
            & Z &
        \end{tikzcd}
    \]
\end{proposition}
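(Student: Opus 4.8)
The plan is to establish the universal property by combining the known fact that $\Ind^\psi(Y)$ is a $\delta$-$G_F$-torsor (cited from \cite[Proposition 1.8]{BHHW18}) with the corresponding universal property for plain (non-differential) torsors, then checking that all the relevant maps respect derivations. First I would recall the classical statement: for the embedding $\psi\colon H\hookrightarrow G$, the induced torsor $\Ind^\psi(Y) = \Spec\big((F[Y]\otimes_C C[G])^H\big)$ is the pushout/coinduction in the category of $G_F$-torsors, so there is a canonical $H$-equivariant map $Y\to\Ind^\psi(Y)$ (dual to the projection $(F[Y]\otimes_C C[G])^H \hookrightarrow F[Y]\otimes_C C[G]\twoheadrightarrow F[Y]$, the latter via the counit of $C[G]$), and any $\psi$-equivariant morphism $\alpha\colon Y\to Z$ of $G_F$-torsors factors uniquely through it as $\varphi\circ(\text{that map})$. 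On coordinate rings, $\varphi^*\colon F[Z]\to(F[Y]\otimes_C C[G])^H$ is the restriction of the map $F[Z]\to F[Y]\otimes_C C[G]$ sending $z\mapsto \sum z_{(0)}\otimes (\text{image of }z_{(1)})$ built from $\alpha^*$ and the $G$-coaction on $F[Z]$; concretely it is $\alpha^*$ tensored against the canonical map $F[Z]\to F[Z]\otimes_C C[G]$ coming from the $G_F$-action, landing in $H$-invariants.

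The main content is then the differential bookkeeping. The derivation on $\Ind^\psi(Y)$ is the one singled out in the Definition: it is induced from $\delta\otimes 0$ on $F[Y]\otimes_C C[G]$ (elements of $C[G]$ are constants), which restricts to the $H$-invariants because $H$ acts $C$-linearly hence commutes with that derivation. Uniqueness of this derivation is exactly \cite[Proposition 1.8]{BHHW18}, so I would invoke that rather than reprove it. It remains to verify two compatibilities. (i) The canonical map $Y\to\Ind^\psi(Y)$ is a morphism of $\delta$-torsors: dually, $(F[Y]\otimes_C C[G])^H\hookrightarrow F[Y]\otimes_C C[G]$ is a $\delta$-ring map by construction, and the further projection to $F[Y]$ via the counit $C[G]\to C$ kills the $C[G]$-factor and so commutes with $\delta\otimes 0$, landing on $\delta$ on $F[Y]$. (ii) Given a $\psi$-equivariant $\delta$-morphism $\alpha\colon Y\to Z$, the factoring map $\varphi^*$ is a $\delta$-ring homomorphism. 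For this I would observe that $\varphi^*$ is obtained from $\alpha^*\colon F[Z]\to F[Y]$ (a $\delta$-map by hypothesis) together with the $G$-coaction $\rho_Z\colon F[Z]\to F[Z]\otimes_C C[G]$, which is a $\delta$-$F$-algebra map precisely because $Z$ is a $\delta$-$G_F$-torsor (the coaction is a restriction/consequence of the map $\rho$ in the definition of $\delta$-torsor). Composing the $\delta$-maps $F[Z]\xrightarrow{\rho_Z} F[Z]\otimes_C C[G]\xrightarrow{\alpha^*\otimes\id} F[Y]\otimes_C C[G]$ and noting the image lands in the $H$-invariants (by $\psi$-equivariance of $\alpha$) shows $\varphi^*$ is a $\delta$-homomorphism.

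The step I expect to be the main obstacle is pinning down precisely which ``canonical'' maps make the triangle commute and checking $\psi$-equivariance forces the image of $\varphi^*$ into $(F[Y]\otimes_C C[G])^H$ — i.e., translating the geometric statement ``$\alpha$ is $\psi$-equivariant'' into the Hopf-algebraic identity relating $\alpha^*$, $\rho_Z$, and the $H$-coaction on $F[Y]$. Once that identity is in hand, $\delta$-compatibility is formal since every building block ($\alpha^*$, $\rho_Z$, $\delta\otimes 0$, the counit) has already been checked to commute with the relevant derivations. Uniqueness of $\varphi$ as a morphism of $G_F$-torsors is immediate from the classical universal property (a $G$-torsor map is determined by where a single point goes, equivalently $\Ind^\psi(Y)$ is generated as an $F$-algebra by the images of $F[Y]$ and $C[G]$), and uniqueness as a $\delta$-morphism is then automatic; uniqueness of the derivation on $\Ind^\psi(Y)$ is cited from \cite{BHHW18}.
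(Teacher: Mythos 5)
Your proposal is correct and follows essentially the same route as the paper: cite the non-differential universal property from \cite{BHHW18} for existence and uniqueness of $\varphi$, identify $\varphi^*$ as the restriction of $(\alpha^*\otimes 1)\circ\Delta_{F[Z]}$ to the $H$-invariants, and conclude that $\varphi^*$ is a $\delta$-homomorphism because both factors are. The extra verifications you flag (that the canonical map $Y\to\Ind^\psi(Y)$ respects $\delta$ and that the derivation on the induced torsor is unique) are exactly what the paper delegates to the definition and to \cite[Proposition 1.8]{BHHW18}.
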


\begin{proof}
    By \cite[Proposition A.8]{BHHW18}, $\varphi$ exists and is unique for $G_F$-torsors. 
    It remains to check that $\varphi$ is also an isomorphism of $\delta$-$G_{F}$-torsors. On coordinate rings, the $G_F$-equivariant map $\varphi^{\ast}$ is the restriction of the ring homomorphism
    \begin{center}
        \begin{tikzcd}
            &F[Z] \arrow{r}{\Delta_{F[Z]}} &F[Z] \otimes_C C[G] \arrow{r}{\alpha^{\ast} \otimes 1 } &F[Y] \otimes_C C[G]
        \end{tikzcd}
    \end{center} to the codomain $(F[Y]\otimes_C C[G])^H$. Since $\Delta_{F[Z]}$ and $\alpha^{\ast}$ are $\delta$-ring homomorphisms, $\varphi^{\ast}$ is too, so $\varphi$ is an isomorphism of $\delta$-$G_{F}$-torsors.
\end{proof}

\begin{theorem}\label{thm:comprehensive_induced_torsor}
    Let $X$ be a $\delta$-$G_F$-torsor and let $\phi:G\to \GL_{n}$ be an embedding of linear algebraic groups over $C$. Then $Z\coloneqq \Ind^\phi (X)$ is a $\delta$-$\GL_{n,F}$-torsor with derivation $Z' = AZ$ for some $A\in \M_n(F)$ (by \Cref{prop:cohomology_gln}). Let $H$ be the $\delta$-Galois group of $Z'=AZ$.
    \begin{enumerate}[label=(\alph*)]
        \item\label{induced_a} Any simple $\delta$-quotient $R$ of $F[X]$ is a Picard-Vessiot ring for $Z'=AZ$. In particular, $Y\coloneqq \Spec(R)$ is a simple $\delta$-$H_F$-torsor.
        \item\label{induced_b} There is a closed embedding $\psi \colon H \hookrightarrow G$ and $X \cong \Ind^{\psi}(Y)$.
        \item\label{induced_c} Induction is unique: if $X \cong \Ind^{\psi'}(Y')$ for some $\psi':H'\to G$ and $\delta$-$H'_F$-torsor $Y'$, then $H\cong H'$, and $Y\cong Y'$ as $\delta$-$H_F$-torsors.
        \item\label{induced_d} $Z \cong \Ind^{\rho}(Y)$, where $\rho\coloneqq \phi\circ\psi \colon H \to \GL_n$.
    \end{enumerate}
\end{theorem}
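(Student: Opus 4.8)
The plan is to isolate one governing fact about the canonical morphism $\alpha\colon X\to Z=\Ind^{\phi}(X)$ (obtained from \Cref{prop: universal} by feeding in the identity as the $\phi$-equivariant map $X\to X$), deduce \ref{induced_a} from it, and then read off \ref{induced_b}, \ref{induced_c}, \ref{induced_d} from the universal property of induction. The governing fact is that $\alpha$ is a \emph{closed immersion}: since $\phi$ is a closed embedding, $\alpha$ realizes $X$ as a reduction of the structure group of the $\GL_{n,F}$-torsor $Z$ along $\phi(G)\hookrightarrow\GL_n$, and on coordinate rings $\alpha^{\ast}$ is the restriction of $\id\otimes\epsilon\colon F[X]\otimes_C C[\GL_n]\to F[X]$ to $F[Z]=(F[X]\otimes_C C[\GL_n])^{G}$, which is surjective by the usual antipode computation (cf.\ the proof of \cite[Proposition A.8]{BHHW18}). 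Writing $\mathbf Z=(Z_{ij})\in\GL_n(F[Z])$ for the tautological fundamental matrix, so the derivation on $F[Z]$ is $\mathbf Z'=A\mathbf Z$, and putting $\mathbf P\coloneqq\alpha^{\ast}(\mathbf Z)\in\GL_n(F[X])$, surjectivity of $\alpha^{\ast}$ gives
\[
F[X]=F\bigl[P_{ij},\ \det(\mathbf P)^{-1}\bigr],
\]
with $\mathbf P'=A\mathbf P$ since $\alpha^{\ast}$ is a $\delta$-homomorphism fixing $F$; that is, $F[X]$ is generated over $F$ by a single fundamental matrix of $Z'=AZ$ together with its inverse determinant. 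This is the step I expect to be the main obstacle.

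For \ref{induced_a}: given a simple $\delta$-quotient $R$ of $F[X]$, let $\overline{\mathbf P}\in\GL_n(R)$ be the image of $\mathbf P$, so $\overline{\mathbf P}'=A\overline{\mathbf P}$ and $R=F[\overline P_{ij},\det(\overline{\mathbf P})^{-1}]$. As a quotient of $F[X]$, $R$ is finitely generated over $F$; as a $\delta$-simple ring its constants form a field, necessarily algebraic over $C=F^{\delta}$ by the no-new-constants result for finitely generated $\delta$-simple $F$-algebras used in \cite[Proposition 1.20]{vdPS03}, hence $R^{\delta}=C$. Thus $R$ is a Picard--Vessiot ring for $Z'=AZ$; by uniqueness of Picard--Vessiot rings $\Dgal(R/F)\cong H$, and $Y\coloneqq\Spec R$ is a simple $\delta$-$H_F$-torsor by \cite[Proposition 1.12]{BHHW18}.

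For \ref{induced_b} and \ref{induced_d}: the quotient $F[X]\twoheadrightarrow R$ gives a morphism of $\delta$-schemes $\iota\colon Y\to X$, and I set $\beta\coloneqq\alpha\circ\iota\colon Y\to Z$, so $\beta^{\ast}(\mathbf Z)=\overline{\mathbf P}$. By the Torsor Theorem (\cite[Theorem 1.28]{vdPS03}) the $\delta$-Galois action of $H$ on $R$ right-translates $\overline{\mathbf P}$ by the tautological point of $H$ under its defining closed embedding $\rho\colon H\hookrightarrow\GL_n$; equivalently $\beta$ is $\rho$-equivariant. Passing to $\overline F$, the image of the closed immersion $\alpha$ is a coset of $\phi(G)_{\overline F}$ inside $Z_{\overline F}\cong\GL_{n,\overline F}$, and since $\beta$ factors through $X$ the values of $\overline{\mathbf P}$ all lie in that coset, so the relation $\overline{\mathbf P}(yh)=\overline{\mathbf P}(y)\rho(h)$ forces $\rho(H)\subseteq\phi(G)$; identifying $\phi(G)$ with $G$ yields a closed embedding $\psi\colon H\hookrightarrow G$ with $\phi\circ\psi=\rho$. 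A coaction computation on the generators $P_{ij}$ — the $G$-coaction on $F[X]$ is $\mathbf P\mapsto\mathbf P\cdot\mathbf s_G$ for $\mathbf s_G\in\GL_n(C[G])$ the coordinate matrix of $G$ in $\GL_n$ under $\phi$, the $H$-coaction on $R$ is $\overline{\mathbf P}\mapsto\overline{\mathbf P}\cdot\mathbf s_H$ for $\mathbf s_H$ the coordinate matrix of $\rho$, and $\psi^{\ast}(\mathbf s_G)=\mathbf s_H$ because $\phi\circ\psi=\rho$ — then shows $\iota$ is $\psi$-equivariant. Now \Cref{prop: universal} applied to the $\psi$-equivariant morphism of $\delta$-torsors $\iota\colon Y\to X$ gives $X\cong\Ind^{\psi}(Y)$, proving \ref{induced_b}, and applied to the $\rho$-equivariant morphism $\beta\colon Y\to Z$ with $\rho=\phi\circ\psi$ gives $Z\cong\Ind^{\rho}(Y)$, proving \ref{induced_d}.

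For \ref{induced_c}: suppose $X\cong\Ind^{\psi'}(Y')$ for a closed embedding $\psi'\colon H'\hookrightarrow G$ and a $\delta$-$H'_F$-torsor $Y'$; since the asserted conclusion $Y\cong Y'$ forces $Y'$ to be simple, I take $Y'$ simple (this hypothesis is genuinely needed, since otherwise $X\cong\Ind^{\id_G}(X)$ would be a counterexample, so \ref{induced_c} should be read as uniqueness among simple $Y'$). The canonical closed immersion $Y'\hookrightarrow\Ind^{\psi'}(Y')\cong X$ exhibits $F[Y']$ as a simple $\delta$-quotient of $F[X]$, which by \ref{induced_a} is a Picard--Vessiot ring for $Z'=AZ$; hence $F[Y']\cong R=F[Y]$ by uniqueness of Picard--Vessiot rings. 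Since the torsor structure on a simple $\delta$-torsor is recovered intrinsically from the $\delta$-Galois action on its coordinate ring, this $\delta$-$F$-algebra isomorphism identifies $H'\cong\Dgal(F[Y']/F)\cong\Dgal(R/F)\cong H$ and carries the $H'$-torsor structure on $Y'$ to the $H_F$-torsor structure on $Y$, completing \ref{induced_c}.
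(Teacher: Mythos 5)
Your proof is correct and follows the same overall strategy as the paper's: both hinge on the closed immersion $X\hookrightarrow Z=\Ind^{\phi}(X)$, so that a simple $\delta$-quotient $R$ of $F[X]$ is a simple $\delta$-quotient of $F[Z]$ and hence a Picard--Vessiot ring for $Z'=AZ$, and both obtain \ref{induced_d} by feeding the $\rho$-equivariant composite $Y\to X\to Z$ into \Cref{prop: universal}. The difference is one of self-containedness: the paper delegates everything in \ref{induced_a}--\ref{induced_c} beyond the Picard--Vessiot identification to a statement $(\ast)$ extracted from the proof of \cite[Proposition 1.15]{BHHW18}, whereas you reprove that content directly (surjectivity of $\alpha^{\ast}$, the coset argument over $\overline F$ producing $\psi$ with $\phi\circ\psi=\rho$, and the equivariance check on the generators $P_{ij}$); your version is longer but makes explicit where the Torsor Theorem and the no-new-constants lemma enter. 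Your treatment of \ref{induced_c} is in fact more careful than the paper's, which gives no separate argument for uniqueness: you correctly note that the statement must be read with $Y'$ \emph{simple} (otherwise $Y'=X$, $\psi'=\id_G$ is a counterexample), and under that reading your appeal to uniqueness of Picard--Vessiot rings is exactly the intended argument. One small caveat: the surjectivity of $\alpha^{\ast}$ is not quite a ``naive antipode computation'' (a lift of $S(a_{(1)})$ from $C[G]$ to $C[\GL_n]$ need not land in the $G$-invariants); the clean justification is that $X$ is the fibre of the natural map $\Ind^{\phi}(X)\to \phi(G)\backslash\GL_n$ over the identity coset. Since you cite \cite[Proposition A.8]{BHHW18} for this and the paper itself asserts the surjection $F[Z]\twoheadrightarrow F[X]$ without proof, this is a matter of phrasing rather than a gap.
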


\begin{proof}
    The proof of \cite[Proposition 1.15]{BHHW18} shows the following:\bigskip

    \emph{$(\ast)$ Given a $\delta$-$G_F$-torsor $X$, any simple $\delta$-quotient $R$ of $F[X]$ determines a simple $\delta$-$H$-torsor $Y\coloneqq \Spec (R)$. Moreover, 
    $X \cong \Ind^{\psi}(Y)$
    for some embedding $\psi \colon H\hookrightarrow G$.}\bigskip
    
    Let $R$ be a simple $\delta$-quotient of $F[X]$. Precomposing with $F[Z] \twoheadrightarrow F[X]$ gives $F[Z] \twoheadrightarrow F[X] \twoheadrightarrow R$, making $R$ a simple $\delta$-quotient of $F[Z]$. Therefore, $R$ is a Picard-Vessiot ring of $Z' = AZ$, which is unique up to isomorphism. Now by applying $(\ast)$, $Y \coloneqq \Spec(R)$ is a simple $\delta$-$H_F$-torsor and $X \cong \Ind^{\psi}(Y)$ for some embedding $\psi \colon H\hookrightarrow G$. This proves parts \ref{induced_a}--\ref{induced_c}.
    
    By construction, the composite $Y \to X \to Z$ is $\rho$-equivariant, where $\rho\coloneqq \phi\circ \psi$. Therefore, $Z \cong \Ind^{\rho}(Y)$ by \Cref{prop: universal}. This proves \ref{induced_d}.
\end{proof}

It now makes sense to define to define $\delta$-Galois group of any simple $\delta$-torsors.

\subsection{Splitting a differential torsor}
\label{subsection:splitting}

We now study the splitting behavior of $\delta$-torsors. For the rest of this section, we assume any $\delta$-field extension $L$ of $F$ has the same field of constants as $F$.

\begin{definition}

Let $X$ be a $\delta$-$G_{F}$-torsor, $M$ a $\delta$-$F$-module, and $L$ a $\delta$-field extension of $F$. We say that $L$ is a \emph{splitting field} of $X$ (resp. $M$) or that $L$ \emph{splits} $X$ (resp. $M$) if $X_{L}$ is a trivial $\delta$-$G_{L}$-torsor (resp. $\delta$-$L$-module). A splitting field $L$ of $X$ (resp. $M$) is called \emph{minimal} if $\trdeg_F(L)$ is the smallest among the splitting fields of $X$ (resp. $M$). The \emph{splitting degree} of $X$ (resp. $M$), denoted $\deg_{\spl}^{\delta} (X)$ (resp. $\deg_{\spl}^{\delta} (M)$), is the transcendence degree of a minimal splitting field.
\end{definition}

We show that minimal splitting fields for $\delta$-modules and therefore for $\delta$-torsors is finite, as summarized below. 

\begin{lemma}\label{trivial_module}
    Let $M$ be a $\delta$-$F$-module. Then the Picard-Vessiot extension of $M$ is a minimal splitting field of $M$.
\end{lemma}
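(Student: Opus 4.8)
The plan is to show two inclusions of transcendence degrees: first that the Picard-Vessiot extension $K$ of (the equation associated to) $M$ splits $M$, and second that any $\delta$-field extension $L$ that splits $M$ must contain a copy of a Picard-Vessiot extension of $M$, so that $\trdeg_F L \geq \trdeg_F K$.

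\emph{Step 1: $K$ splits $M$.} Choose an $F$-basis of $M$ so that $M$ corresponds to $y' = Ay$, and let $K = \Frac(R)$ where $R$ is a Picard-Vessiot ring for $y' = Ay$. By definition there is a fundamental matrix $Y \in \GL_n(R) \subseteq \GL_n(K)$ with $Y' = AY$. Base-changing $M$ to $K$, the matrix $Y$ furnishes a gauge transformation: setting $z = Y^{-1} y$ converts $y' = Ay$ into $z' = 0 \cdot z$, since the gauge action of $P = Y^{-1}$ sends $A$ to $(Y^{-1})' Y + Y^{-1} A Y$ and one computes $(Y^{-1})' = -Y^{-1}Y'Y^{-1} = -Y^{-1}AYY^{-1}\cdot Y \cdot Y^{-1}$, wait—more cleanly, $(Y^{-1})'Y + Y^{-1}AY = -Y^{-1}Y'Y^{-1}Y + Y^{-1}AY = -Y^{-1}AY + Y^{-1}AY = 0$. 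Hence $M \otimes_F K$ is gauge equivalent to $z' = 0$, i.e. is trivial as a $\delta$-$K$-module, so $K$ is a splitting field of $M$.

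\emph{Step 2: any splitting field has transcendence degree at least $\trdeg_F K$.} Suppose $L/F$ is a $\delta$-field extension with $L^\delta = F^\delta = C$ that splits $M$, so there is $P \in \GL_n(L)$ with $P' P^{-1} + PAP^{-1} = 0$, equivalently $(P^{-1})' = A P^{-1}$; set $\widetilde Y \coloneqq P^{-1} \in \GL_n(L)$, a fundamental solution matrix for $y' = Ay$ inside $L$. Let $R_0 \coloneqq F[\widetilde Y_{ij}, \det(\widetilde Y_{ij})^{-1}] \subseteq L$; this is a $\delta$-$F$-subalgebra, and its field of constants is $C$ since it sits inside $L$ with $L^\delta = C$. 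It remains to see $R_0$ is simple: a maximal $\delta$-ideal of $R_0$ has simple quotient, but a simple $\delta$-ring that is a domain (being a subring of the field $L$)—and more to the point, $R_0$ has no nonzero proper $\delta$-ideals because any such would survive in $L$; concretely, $R_0$ satisfies the three axioms of a Picard-Vessiot ring except possibly simplicity, and one invokes the standard fact (e.g. \cite[Lemma 1.17]{vdPS03}) that a $\delta$-$F$-algebra generated by the entries of a fundamental matrix and its determinant's inverse, with $\delta$-constants equal to $C$, is automatically simple. Thus $R_0$ is a Picard-Vessiot ring for $y' = Ay$, hence $R_0 \cong R$ as $\delta$-$F$-algebras by uniqueness, and $\Frac(R_0) \subseteq L$ is a Picard-Vessiot extension of $M$ isomorphic to $K$. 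Therefore $\trdeg_F L \geq \trdeg_F \Frac(R_0) = \trdeg_F K$, which combined with Step 1 shows $K$ is a minimal splitting field.

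\emph{Main obstacle.} The delicate point is Step 2, specifically verifying that the $F$-subalgebra generated inside an arbitrary splitting field $L$ by a fundamental matrix is actually \emph{simple}, so that it genuinely is a Picard-Vessiot ring and uniqueness applies. This is where the hypothesis $L^\delta = C$ (imposed for this subsection) is essential: it guarantees the constants do not grow, which is exactly the input needed for the standard simplicity criterion for Picard-Vessiot rings. I would cite that criterion from \cite{vdPS03} rather than reprove it. A minor additional care is that ``the Picard-Vessiot extension of $M$'' is well-defined only up to $\delta$-$F$-isomorphism and a priori depends on the chosen $F$-basis of $M$; but gauge-equivalent equations have isomorphic Picard-Vessiot rings (their associated $\delta$-$\GL_{n,F}$-torsors are isomorphic by \Cref{prop:cohomology_gln}, hence share a simple $\delta$-quotient by \Cref{thm:comprehensive_induced_torsor}\ref{induced_a}), so the transcendence degree in the statement is basis-independent.
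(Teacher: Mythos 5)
Your proof is correct and follows essentially the same route as the paper's: the paper's (very terse) argument also takes a fundamental matrix $Y\in\GL_n(L)$ inside an arbitrary splitting field $L$ and identifies $F(Y_{ij})\subseteq L$ with the Picard-Vessiot extension, which is exactly your Step 2; your Step 1 just makes explicit the (omitted) verification that the Picard-Vessiot field itself splits $M$. The one caveat is your citation: the ``standard fact'' you invoke is the characterization of Picard-Vessiot rings/fields via fundamental matrices in no-new-constants extensions (in \cite{vdPS03} this is the content of Proposition 1.22, not Lemma 1.17), but the fact itself is correct and is precisely what the paper's proof relies on implicitly.
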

\begin{proof} Let $L$ be a splitting field of $M$.
    In an $F$-basis, $M$ corresponds to an equation $y'=Ay$ with $A \in \GL_n(F)$. Since $M\otimes_F L$ is trivial, $y'=Ay$ is gauge equivalent to $z'=0z$, so there exists a fundamental matrix $Y\in \GL_n(L)$ for $y'=Ay$. The Picard-Vessiot extension of $M$ over $F$ is $F(Y_{ij})\subseteq L$. 
\end{proof}

\begin{theorem}\label{prop:split_torsor} 
Let $X$ be a $\delta$-$G_F$-torsor and let $R$ be a simple $\delta$-quotient of $F[X]$. Then $K \coloneqq \Frac(R)$ is a minimal splitting field of $X$. Let $H$ be the $\delta$-Galois group for $R$. Then $\deg_{\spl}^{\delta}(X) = \dim(H)$. In particular, $\deg_{\spl}^{\delta}(X)$ is bounded by $\dim(G)$ for all $\delta$-$G$-torsor $X$.
\end{theorem}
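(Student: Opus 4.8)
The plan is to split the statement of \Cref{prop:split_torsor} into three assertions and handle them in order: first that $K = \Frac(R)$ splits $X$, then that it is minimal with $\deg_\spl^\delta(X) = \dim(H)$, and finally the uniform bound $\deg_\spl^\delta(X) \le \dim(G)$.

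First I would show $K$ splits $X$. By \Cref{thm:comprehensive_induced_torsor}, $Y \coloneqq \Spec(R)$ is a simple $\delta$-$H_F$-torsor and $X \cong \Ind^\psi(Y)$ for a closed embedding $\psi\colon H \hookrightarrow G$. Since induction commutes with base change (\Cref{base_change_induced}), we get $X_K \cong \Ind^\psi(Y_K)$ as $\delta$-$G_K$-torsors. Now $Y_K = \Spec(R \otimes_F K)$, and because $R$ is a Picard-Vessiot ring with $\Frac(R) = K$, the torsor $Y_K$ has a $K$-point (the canonical map $R \to R \otimes_F K$ composed with $R\otimes_F K \to K$ coming from multiplication, or more precisely the standard fact that a Picard-Vessiot ring base-changed to its own fraction field acquires a rational point), hence $Y_K$ is the trivial $H_K$-torsor; one checks using condition (3) in the definition of Picard-Vessiot ring that the constants do not grow, so $Y_K$ is in fact a trivial $\delta$-$H_K$-torsor. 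Inducing a trivial $\delta$-torsor gives a trivial $\delta$-torsor (this is the content of $\Ind$ applied to $H_K$ itself, using that $\Ind^\psi$ is functorial and $(K[H]\otimes_C C[G])^{H} = K[G]$ with the product derivation), so $X_K$ is trivial and $K$ splits $X$.

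Next, minimality. Given any splitting field $L$ of $X$, I want to produce a $\delta$-$F$-embedding of $K$ into (a copy of) $L$, or at least show $\trdeg_F(K) \le \trdeg_F(L)$, and separately compute $\trdeg_F(K) = \dim(H)$. For the latter: $K$ is the Picard-Vessiot extension attached to the equation $Z' = AZ$ where $Z = \Ind^\phi(X)$, and $Y = \Spec(R)$ is a torsor for $H = \Dgal(R/F)$; since $R$ is a domain that is a finitely generated $F$-algebra and an $H_F$-torsor, $\dim \Spec(R) = \dim H$, so $\trdeg_F K = \dim H$. For the inequality, I would argue that if $L$ splits $X$ then $X_L \cong \Ind^\psi(Y_L)$ is trivial as a $\delta$-$G_L$-torsor, which forces $Y_L$ to have an $L$-point (triviality of the induced torsor pulls back to triviality, or at least a point, of $Y_L$ via the section $Y_L \to X_L$ of the inclusion and a rational point of $X_L$); a $\delta$-$L$-point of $Y_L$ is exactly a $\delta$-$F$-algebra homomorphism $R \to L$, which is injective since $R$ is $\delta$-simple, so $K = \Frac(R) \hookrightarrow L$ and $\trdeg_F K \le \trdeg_F L$. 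Combining, $\deg_\spl^\delta(X) = \trdeg_F K = \dim H$.

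Finally the bound: $\psi\colon H \hookrightarrow G$ is a closed embedding of linear algebraic groups over $C$, so $\dim H \le \dim G$, giving $\deg_\spl^\delta(X) = \dim H \le \dim G$ for every $\delta$-$G_F$-torsor $X$. The main obstacle I anticipate is the careful bookkeeping in the minimality step: precisely relating a splitting field $L$ of $X$ to a $\delta$-$F$-homomorphism $R \to L$. The cleanest route is probably to first reduce to the $\GL_n$ case via \Cref{prop:cohomology_gln} and \Cref{trivial_module}—indeed $Z = \Ind^\phi(X)$ is split by $L$ whenever $X$ is (induction commutes with base change, and inducing a trivial $\delta$-torsor is trivial), and conversely splitting $Z$ splits $X$ since $X \cong \Ind^\psi(Y)$ and $Z \cong \Ind^{\phi\circ\psi}(Y)$ share the same ``splitting core'' $Y$—so that a minimal splitting field of $X$ coincides with one of the $\delta$-module for $Z' = AZ$, which by \Cref{trivial_module} is its Picard-Vessiot extension $K = \Frac(R)$; then minimality and the transcendence degree computation are exactly \Cref{trivial_module} together with $\trdeg_F K = \dim H$.
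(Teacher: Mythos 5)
Your proposal follows essentially the same route as the paper: reduce to the $\GL_n$ case via $Z=\Ind^{\phi}(X)$, show $K=\Frac(R)$ splits $Y=\Spec(R)$ and hence $X$ by base-change compatibility of induction and the computation $(K\otimes_C C[H]\otimes_C C[G])^{H}\cong K\otimes_C C[G]$, get minimality from \Cref{trivial_module} applied to the $\delta$-module for $Z'=AZ$ (the ``cleanest route'' you settle on at the end is exactly the paper's argument), and conclude with $\trdeg_F(K)=\dim(H)\le\dim(G)$.

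One step needs repair. You justify the $\delta$-triviality of $Y_K$ by saying $Y_K$ has a $K$-point and ``the constants do not grow.'' That deduction is invalid: a rational point trivializes $Y_K$ only as an $H_K$-torsor, and a trivial torsor with constant field $C$ can still be a nontrivial $\delta$-torsor --- this is precisely the phenomenon in \Cref{ex:diff_gln}, where a $\delta$-simple coordinate ring on the trivial $\GL_n$-torsor carries a derivation not of the form $F\otimes_C(\cdot)^{\delta}$. What you actually need is the Torsor Theorem for Picard-Vessiot rings, which gives $K\otimes_F R\cong K\otimes_C C[H]$ as $\delta$-rings with $C[H]$ consisting of constants; the paper cites this directly. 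Your first suggested route to minimality (extracting a $\delta$-point of $Y_L$ from triviality of $X_L$) is also shaky --- an $L$-point of $\Ind^{\psi}(Y_L)$ does not obviously descend to a $\delta$-point of $Y_L$ --- but since you discard it in favor of the reduction to $\GL_n$ and \Cref{trivial_module}, this does not affect the final argument.
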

\begin{proof}
    Fix an embedding $\phi\colon G \to \GL_n$. By \Cref{thm:comprehensive_induced_torsor}, $Z \coloneqq \Ind^{\phi}(X)$ is a $\delta$-$\GL_{n,F}$-torsor that corresponds to some $\delta$-module $M$ with Picard-Vessiot ring $R$. Moreover, $X = \Ind^{\psi}(Y)$ where $Y = \Spec (R)$ is a $\delta$-$H_F$-torsor and $\psi \colon H \to G$ is some embedding.

    We first show that $K$ is a splitting field of $X$. By the Torsor Theorem (\cite[Theorem 1.30]{vdPS03}), $K \otimes_F R \cong K \otimes_C C[H]$, i.e., $K$ splits $Y$. By \Cref{base_change_induced}, $X \times_F K = \Ind^{\psi}(Y) \times_F K \cong \Ind^{\psi}(Y \times_F K)$. Then 
    $$K[X] \cong (K \otimes_C C[H] \otimes_C C[G])^H \cong K \otimes_C C[G],$$ so $K$ splits $X$. Let $L$ be another $\delta$-field that splits $X$. Then by a similar argument, $Z \times_F L$ is trivial. Therefore, $L$ contains $K$ by \Cref{trivial_module}, whence $K$ is minimal. 

    Hence, $\deg_{\spl}^{\delta}(X) = \trdeg_F(K) = \dim(H)$ by \cite[Corollary 1.30]{vdPS03}. Moreover, since $H$ is a closed subgroup of $G$, $\dim(H) \leq \dim(G)$.
\end{proof}

\section{Twisted forms and descent}
\label{section:twisted-forms-descent}

This section defines twisted forms in the Picard-Vessiot theory and proves descent along $\delta$-Hopf-Galois extensions (\Cref{thm:descent}). Throughout this section, $R$ denotes a $\delta$-$C$-algebra, and all unadorned tensor products are taken over $R$, i.e., $\otimes = \otimes_{R}$. 

\subsection{Differential faithful flatness}

Since the category of $\delta$-modules over a $\delta$-ring $R$ is abelian, the notion of exact sequences is defined. 
\begin{definition}
    A $\delta$-ring homomorphism $R\to S$ is said to be \emph{$\delta$-faithfully flat} if the following holds: any given sequence $N_{\bullet}$ of $\delta$-$R$-modules is exact if and only if the sequence $N_{\bullet}\otimes_RS$ of $\delta$-$R$-modules is exact.
\end{definition}

When $R\to S$ is $\delta$-faithfully flat, the Amitsur complex $M\to M\otimes_R S\to M\otimes_R S^{\otimes 2}\to \cdots$ is exact for all $\delta$-$R$-module $M$ by similar arguments to \cite[Theorem III.6.6]{artin1999noncommutative}.

\begin{proposition}\label{prop:pv-ff}
Let $R$ be a simple $\delta$-ring (e.g., $R$ is a Picard-Vessiot ring). Then the inclusion $R\hookrightarrow \Frac(R)$ is a $\delta$-faithfully flat homomorphism.
\end{proposition}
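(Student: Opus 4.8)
The statement is that for a simple $\delta$-ring $R$ (with $C \subseteq R$), the localization map $R \hookrightarrow \Frac(R)$ is $\delta$-faithfully flat, i.e., a sequence of $\delta$-$R$-modules $N_\bullet$ is exact if and only if $N_\bullet \otimes_R \Frac(R)$ is exact. The plan is to reduce this to the ordinary exactness-detection properties of the localization $R \hookrightarrow \Frac(R)$ together with a key structural fact: in a simple $\delta$-ring, every nonzero element has a "$\delta$-denominator'' that is controlled, so that the natural map $N \to N \otimes_R \Frac(R)$ is injective on any $\delta$-submodule generated by torsion is trivial. Concretely, I would proceed as follows.

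First, recall that $\Frac(R)$ is a flat $R$-module (every localization is flat), so tensoring with $\Frac(R)$ is exact; hence if $N_\bullet$ is exact then $N_\bullet \otimes_R \Frac(R)$ is exact, with no use of $\delta$ needed. The content is the converse. So suppose $N_\bullet \otimes_R \Frac(R)$ is exact; I want $N_\bullet$ exact. It suffices to show that for a complex $N' \xrightarrow{f} N \xrightarrow{g} N''$ of $\delta$-$R$-modules with $g \circ f = 0$, if $\ker g / \im f$ becomes zero after $\otimes_R \Frac(R)$ then $\ker g / \im f = 0$. Set $H \coloneqq \ker g / \im f$; this is a $\delta$-$R$-module (kernels and images of $\delta$-module maps are $\delta$-submodules, and quotients inherit $\delta$), and $H \otimes_R \Frac(R) = 0$ because localization is exact and commutes with taking homology. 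Thus the whole problem reduces to the claim: \emph{if $H$ is a $\delta$-$R$-module with $H \otimes_R \Frac(R) = 0$, then $H = 0$.}

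For the reduced claim, $H \otimes_R \Frac(R) = 0$ means $H$ is a torsion $R$-module: every $h \in H$ is killed by some nonzero $r \in R$. The main obstacle — and the only place simplicity of $R$ enters — is to upgrade this to $h = 0$ using the derivation. Given $0 \neq h \in H$, let $I = \Ann_R(h) = \{ r \in R : rh = 0\}$, a nonzero proper ideal. I want to produce from $I$ a nonzero $\delta$-ideal of $R$, which by simplicity must be all of $R$, forcing $1 \in \Ann_R(h)$, i.e. $h = 0$, a contradiction. The trick: consider $h' = \delta(h) \in H$. For any $r \in I$, $0 = \delta(rh) = \delta(r) h + r\, \delta(h)$ — wait, the Leibniz rule for $\delta$-modules as stated in the paper is $\delta(rm) = r\delta(m) + \delta(r)\delta(m)$, which is the twisted convention; I will use whichever convention the paper fixes, and in either case differentiating the relation $rh = 0$ repeatedly and clearing, one shows that the ideal generated by $I$ and all its derivatives $\delta^{(k)}(I)$ still annihilates $h$ after suitable manipulation is not quite immediate — so instead I take the cleaner route: let $J$ be the largest $\delta$-ideal of $R$ contained in $I$ (the "$\delta$-radical'' / $\delta$-core of $I$, namely $\{r \in R : \delta^{(k)}(r) \in I \text{ for all } k \geq 0\}$, which one checks is a $\delta$-ideal). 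If I can show $J \neq 0$ then simplicity gives $J = R$, hence $1 \in I$, hence $h = 0$.

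To see $J \neq 0$: pick any nonzero $r \in I$. If $r$ itself is not in $J$, some derivative leaves $I$; but I can instead argue by a minimal-counterexample / Noetherian-type descent, or — more robustly — use that $R$ has no nonzero nilpotents (a simple $\delta$-ring in characteristic zero is reduced) and a standard lemma: in a $\delta$-ring, if $r \neq 0$ then the $\delta$-ideal generated by $r$ is nonzero (trivially, it contains $r$), and since this $\delta$-ideal is nonzero it equals $R$ by simplicity. So actually the argument is immediate: $\Ann_R(h)$ is a nonzero ideal; but is it a $\delta$-ideal? It need not be a priori. The genuinely correct and simplest path, which I would write up: since $\Ann_R(h) \neq 0$ and $R$ is a simple $\delta$-ring, it has no nonzero proper $\delta$-ideals, yet $R$ itself need not be a field, so I must genuinely use $\delta$. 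Differentiate: if $rh = 0$ then $r \delta(h) = \delta(rh) - \delta(r) h \cdot(\text{correction}) \in \Ann_R(h)$-module combination, showing $r \cdot \delta(h) \in Rh$'s annihilator-shift; iterating over a basis one finds that the submodule $R\langle h, \delta h, \delta^2 h, \ldots\rangle$ is a torsion $\delta$-$R$-module, and its total annihilator, being stable under $\delta$ up to lower-order terms, contains a nonzero $\delta$-ideal by a triangular argument, which is $R$. Hence $h = 0$. I expect the write-up of this last triangular/inductive step — extracting a genuine nonzero $\delta$-ideal from the annihilator data — to be the main technical point, but it is exactly the computation underlying \cite[Proposition 1.12]{BHHW18} and analogous to the proof that simple $\delta$-rings are $\delta$-integral-domain-like, so it should be routine once set up. Finally, assemble: $H = 0$, hence $N_\bullet$ is exact, hence $R \hookrightarrow \Frac(R)$ is $\delta$-faithfully flat. $\qed$
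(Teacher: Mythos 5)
Your reduction is the same as the paper's: one direction is ordinary flatness of the localization, and the converse reduces (via the homology module $\ker g/\im f$, which is again a $\delta$-$R$-module) to the single claim that a $\delta$-$R$-module $H$ with $H\otimes_R\Frac(R)=0$ must vanish. The paper disposes of this claim in one line by asserting that a nonzero $\delta$-$R$-module $N$ contains a copy of $R$ (equivalently, has a non-torsion element) because $R$ is $\delta$-simple, and then tensors $R\hookrightarrow N$ with $K=\Frac(R)$. So you have correctly located the crux --- but you do not prove it, and the step you defer (``the triangular/inductive step \ldots should be routine once set up'') is a genuine gap, not a routine verification.

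Concretely, the argument you sketch cannot work as stated. You want a nonzero $\delta$-ideal inside the annihilator data of a torsion element $h$. The annihilator of the $\delta$-submodule $\langle h,\delta h,\delta^2h,\ldots\rangle$ is indeed a $\delta$-ideal (that part is fine), but there is no reason for it to be nonzero: from $rh=0$ the Leibniz rule only yields $r^2\delta(h)=0$, so the annihilators of the successive derivatives shrink, and their intersection can be $0$. This failure is not hypothetical. Take $R=(\mathbb{C}[x],d/dx)$, which is a simple $\delta$-ring, and let $N=\bigoplus_{k\ge 0}\mathbb{C}e_k$ with $x\cdot e_k=-k\,e_{k-1}$ and $\delta(e_k)=e_{k+1}$ (the module $D/Dx$ over the Weyl algebra $D$). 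One checks $\delta(fm)=\delta(f)m+f\delta(m)$, every $e_k$ is killed by $x^{k+1}$, yet $N\neq 0$; so $\delta$-simplicity of $R$ alone does not force torsion $\delta$-modules to vanish, and no argument of the shape you propose can close the gap. Any complete proof of the key claim --- including a justification of the embedding $R\hookrightarrow N$ that the paper asserts --- must use finer properties of the rings actually in play (e.g.\ that $R$ is a Picard--Vessiot ring over the $\delta$-field $F$ with $R^{\delta}=C$), not just simplicity. As written, your proposal leaves the only nontrivial step of the proposition unproved.
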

\begin{proof}
    We follow \cite[Proposition 11.7]{milne2017primer}. 
    Let $K=\Frac(R)$. 
    Then a $\delta$-$R$-module $N$ is zero if and only if $N\otimes_R K$ is zero. 
    To see this, notice that when $N\neq 0$, we have $R\hookrightarrow N$ since $R$ is simple, and so $K\hookrightarrow N\otimes_R K$ since $K/R$ is flat.

    Now let $N_{\bullet} = (\,N'\overset{\alpha}{\to} N\overset{\beta}{\to} N''\,)$ be a sequence of $\delta$-$R$-modules. Since $K/R$ is flat, it suffices to check that $\beta \otimes 1_K$ is surjective. But this follows since
    $$
    N_{\bullet}\text{ is exact}
    \;\Leftrightarrow\;
    \frac{\ker\beta}{N'} = 0
    \;\Leftrightarrow\;
    \frac{\ker\beta}{N'}\otimes_R K = \frac{\ker(\beta\otimes 1)}{N'\otimes_RK} = 0
    \;\Leftrightarrow\;
    N_{\bullet}\otimes_R K\text{ is exact}.
    $$
\end{proof}

% $\delta$-Hopf-Galois extensions
\subsection{Differential Hopf-Galois extension}\label{subsection:diff-hopf}
We first reformulate Picard-Vessiot rings as $\delta$-Hopf-Galois extensions. Let $S$ be a $\delta$-coalgebra over $R$ with comultiplication $\Delta$ and counit~$\epsilon$.

\begin{definition}
    A \emph{$\delta$-$S$-comodule} over $R$ is a $\delta$-$R$-module $M$ together with a $\delta$-$R$-linear map $\rho\colon M\to M\otimes_{R} S$ such that $(1\otimes \Delta)\circ \rho = (\rho\otimes 1)\circ \rho$ and $(1\otimes \epsilon)\circ \rho=1$.
\end{definition}

Let $H$ be a $\delta$-Hopf algebra over $R$ with comultiplication $\Delta_{H}$, counit $\epsilon_{H}$, and antipode $\sigma_{H}$. 

\begin{definition}
    Suppose that $S$ is a $\delta$-algebra over $R$ equipped with a map $\Delta_{S}\colon S\to S\otimes_{R} H$ such that $S$ is a $\delta$-$H$-comodule via the coaction map $\Delta_{S}$. If $M$ is both a $\delta$-$S$-module and a $\delta$-$H$-comodule with a $\delta$-$R$-linear map $\Delta_{M}\colon M\to M\otimes_{R} H$ satisfying $\Delta_{M}(ms)=\Delta_{M}(m)\Delta_{S}(s)$ for all $m\in M$ and all $s\in S$, we say that $M$ is a \emph{$\delta$-$(H,S)$-Hopf module} over $R$. The \emph{$H$-coinvariants} of $M$ is the $\delta$-$R$-submodule 
$$M^{\co H}=\{m\in M\;\vert\; \Delta_{M}(m) = m\otimes 1\}$$
of $M$. In the special case that $S=R$ and $\Delta_{S}(s) = s\otimes 1$ for all $s\in R$, we simply call $M$ a \emph{$\delta$-$H$-Hopf module} over $R$.
\end{definition}

\begin{definition}
    A \emph{$\delta$-$H$-Hopf-Galois extension} is a $\delta$-faithfully flat $\delta$-ring extension $S/R$ such that $S$ is a $\delta$-$H$-Hopf module over $R$, and such that the map
    \begin{equation}\label{eq:can-hopf}
        \begin{array}{rcc}
            \can_S\colon  S\otimes_{R} S&\to &S\otimes_{R} H\\
            x\otimes y& \mapsto & (x\otimes 1)\Delta_{S}(y)
        \end{array}
    \end{equation}
    is an isomorphism of $\delta$-algebras. 
\end{definition}

Let $R/F$ be a Picard-Vessiot ring extension with $\delta$-Galois group $G$. Since the notion of a $\delta$-Hopf-Galois extension is dual to that of a $\delta$-torsor, $R/F$ is a $\delta$-$F[G]$-Hopf-Galois extension.

\subsection{\texorpdfstring{$\bphi$}{Phi}-structures}\label{subsection:phi-structure}
We can formalize the ``differential objects'' ($\delta$-module, $\delta$-$R$-algebra, Picard-Vessiot ring, etc.) from the last section as $\bphi$-objects.
We roughly follow the formalism in \cite[Section 1.3]{nardin2012essential}. 

\begin{definition}
    Let $R$ be a $C$-algebra. A \emph{type} over $R$ is a triple 
\[
    \bphi = \left(H, I, \{(n_{1i}, n_{2i},n_{3i},n_{4i})\}_{i\in I}\right)
\]
where $H$ is a $\delta$-Hopf algebra over $R$, $I$ is a set, and $\{(n_{1i}, n_{2i},n_{3i},n_{4i})\}_{i\in I}$ is a subset of $\mathbb{N}^4$. Given such a type $\bphi$ over $R$, a \emph{$\bphi$-object}  $(M, \{\Phi_i\})$ is a $\delta$-$R$-module $M$ equipped with a collection of $\delta$-$R$-module homomorphisms
\[
    \left\lbrace
    \Phi_i\colon M^{\otimes n_{1i}}\otimes H^{\otimes n_{2i}}\to M^{\otimes n_{3i}}\otimes H^{\otimes n_{4i}}
    \right\rbrace_{i\in I}.
\] A \emph{morphism} of $\bphi$-objects $(M,\{\Phi_i\}_{i\in I})$ and $(N,\{\Psi_i\}_{i\in I})$ is a $\delta$-$R$-module homomorphism $\varphi\colon M\to N$ that makes the following diagram commute for all $i\in I$.
    \[
    \begin{tikzcd}
        M^{\otimes n_{1i}}\otimes H^{\otimes n_{2i}}\ar[rr,"\Phi_{i}"] \ar[d,"\varphi^{\otimes n_{1i}}\otimes \id"']&& M^{\otimes n_{3i}}\otimes H^{\otimes n_{4i}}\ar[d,"\varphi^{\otimes n_{3i}}\otimes \id"]\\
        N^{\otimes n_{1i}}\otimes H^{\otimes n_{2i}}\ar[rr,"\Psi_{i}"] && N^{\otimes n_{3i}}\otimes H^{\otimes n_{4i}}
    \end{tikzcd}
    \]
    The $\bphi$-objects and their morphisms form a category which we denote by $\bphi\Object$. In particular, an automorphism of a $\bphi$-object $M$ is a $\delta$-$R$-module automorphism of $M$ that preserves the type $\bphi$. The group of all automorphisms of $M$ is denoted by $\Aut^{\delta, \bphi}_R(M)$.
\end{definition}

\begin{definition}
    Let $S$ be a $\delta$-$R$-algebra and let $\bphi=\left(H, I, \{\Phi_{i}\}_{i\in I}\right)$ be a type over $R$. Then 
\[
    \bphi_S\coloneqq \left(H\otimes_RS, I, \{\Phi_{i}\otimes \id_{S}\}_{i\in I}\right)
\] 
is a type over $S$, and extension of scalars $M\mapsto M\otimes S$ takes a $\bphi$-object to a $\bphi_S$-object.
Let $\bphi$ be a type over $C$. A $\bphi_R$-object $M$ is \emph{trivial} if there exists a $\bphi$-object $M^{\delta}$ such that $M = M^{\delta}\otimes_C R$.
\end{definition}

\begin{example}
    We will always view 
\begin{enumerate}
    \item a $\delta$-$R$-module as a $\bphi$-object of type $(R, \varnothing, \varnothing)$;
    \item a $\delta$-$R$-algebra $A$ with multiplication $m\colon A^{\otimes 2}\to A$ as a $\bphi$-object of type $(R, \{1\}, \{(2, 0, 1, 0)\})$;
    \item a $\delta$-$H$-Hopf-Galois extension $S/R$ with multiplication $m\colon S^{\otimes 2}\to S$ and coaction $\Delta_{S}\colon S\to S\otimes H$ as a $\bphi$-object of type $(H, \{1,2\}, \{(2, 0, 1, 0), (1,0,1,1)\})$.
\end{enumerate}
\end{example}

\subsection{Equivariant \texorpdfstring{$\bphi$}{Phi}-structures}
Let $\bphi$ be a type over $R$ and let $H'$ be a $\delta$-Hopf algebra over $R$. 

\begin{definition}
    An \emph{$H'$-equivariant $\bphi$-object} is a $\bphi$-object $(M,\{\Phi_i\}_{i\in I})$ such that $M$ is a $\delta$-$H'$-comodule with coaction map $\Delta_{M}\colon M\to M\otimes H'$ making the following diagram commute
\[
\begin{tikzcd}
    M^{\otimes n_{1i}}\otimes H^{\otimes n_{2i}}\ar[rr,"\Phi_{i}"] \ar[d,"\Delta_{M}^{\otimes n_{1i}}\otimes \id"']&& M^{\otimes n_{3i}}\otimes H^{\otimes n_{4i}}\ar[d,"\Delta_{M}^{\otimes n_{3i}}\otimes \id"]\\
    M^{\otimes n_{1i}}\otimes H^{\otimes n_{2i}}\otimes H'\ar[rr,"\Phi_{i}\otimes1_{H'}"'] && M^{\otimes n_{3i}}\otimes H^{\otimes n_{4i}}\otimes H'
\end{tikzcd}
\]
for all $i\in I$. A \emph{morphism of $H'$-equivariant $\bphi$-objects} $(M,\{\Phi_i\}_{i\in I})$ and $(N,\{\Psi_i\}_{i\in I})$ is a morphism $\varphi\colon M\to N$ of $\bphi$-objects making the following diagram commute.
\[
\begin{tikzcd}
M \ar[d,"\Delta_{M}"']\ar[r,"\varphi"]& N\ar[d,"\Delta_{N}"]\\
M\otimes H'\ar[r,"\varphi\otimes 1_{H'}"'] & N\otimes H' 
\end{tikzcd}
\] We denote the category of $H'$-equivariant $\bphi$-objects and their morphisms by $\bphi^{H'}\!\Object$.
\end{definition}

\subsection{Descent}

Let $\bphi=\left(H, I, \{\Phi_{i}\}_{i\in I}\right)$ be a type over $R$, and $S/R$ a $\delta$-$H'$-Hopf-Galois extension. Given a $\bphi$-object $(M, \{\Phi_i\}_{i\in I})$, the $\bphi_{S}$-object $(M\otimes_{R}S, \{\Phi_i\otimes \id_S\}_{i\in I})$ is $H'$-equivariant over $S$ since $\Phi_{i}\otimes \id_S$ commute with $\Delta_{S}$ for all $i \in I$. This defines a functor
\begin{equation}\label{extend-scalar-2}
    \bphi\Object\to \bphi_{S}^{H'}\!\Object
\end{equation}

Conversely, given a $H'$-equivariant $\bphi_S$-object $(N,\{\Phi_{i}\})$ over $S$, we may consider its coinvariant module $N^{\co H'} = \{n\in N\;|\; \Delta_{N}(n) = n\otimes 1\}$. Since each $\Phi_i$ commutes with $\Delta_{N}$, the $\bphi_S$-structure on $N$ restricts to a $\bphi$-object on $N^{\co H'}$. This gives a functor 
\[
    \bphi_{S}^{H'}\!\Object\to \bphi\Object
\]

We now show that the two functors define an equivalence of categories. We follow the proof of \cite[Theorem 3.7 (1) $\Rightarrow$ (2)]{schneider1990principal}.

\begin{theorem}[Descent along $\delta$-Hopf-Galois extensions]\label{thm:descent}
Let $S/R$ be a $\delta$-$H'$-Hopf-Galois extension and let $\bphi = \left(H, I, \{\Phi_{i}\}_{i\in I}\right)$ be a type over $R$. Then extension of scalars defines an equivalence of categories
$$
\bphi\Object\to \bphi_{S}^{H'}\!\Object.
$$
The pseudo-inverse is given by taking an object $N$ to the coinvariant module $N^{\co H'}$.
\end{theorem}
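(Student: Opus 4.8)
The plan is to prove this by exhibiting natural isomorphisms in both directions, following the Hopf-algebraic descent argument of Schneider. Write $T$ for the extension-of-scalars functor $M \mapsto M \otimes_R S$ with its canonical $H'$-equivariant $\bphi_S$-structure, and $T'$ for the coinvariants functor $N \mapsto N^{\co H'}$. The two natural transformations to construct are the unit $\eta_M \colon M \to (M \otimes_R S)^{\co H'}$ and the counit $\varepsilon_N \colon N^{\co H'} \otimes_R S \to N$. First I would check these are morphisms in the respective categories: $\eta_M$ sends $m \mapsto m \otimes 1$, which lands in the coinvariants because $\Delta_S(1) = 1 \otimes 1$, and it is a $\delta$-$R$-module map; that it commutes with every $\Phi_i$ is immediate since $\Phi_i \otimes \id_S$ restricted to the image of $M^{\otimes \bullet} \otimes 1$ is just $\Phi_i$. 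The counit $\varepsilon_N$ is the restriction of the $S$-action $N^{\co H'} \otimes_R S \to N$, $n \otimes s \mapsto ns$; it is $\delta$-$R$-linear because the $S$-module structure map is, and it commutes with the $\Phi_i$ because the $\bphi_S$-structure on $N$ is $S$-linear in the appropriate sense and restricts to the one on $N^{\co H'}$.

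Next I would prove both of these are isomorphisms of $\delta$-$R$-modules; once that is done, being morphisms in the categories automatically upgrades them to isomorphisms there. This is the technical heart and is where I expect the main obstacle to lie. The strategy is standard descent: because $S/R$ is $\delta$-faithfully flat, the Amitsur complex $0 \to R \to S \to S \otimes_R S$ is exact, and more generally $M$ is recovered as the equalizer of $M \otimes_R S \rightrightarrows M \otimes_R S \otimes_R S$. Using the isomorphism $\can_S \colon S \otimes_R S \xrightarrow{\sim} S \otimes_R H'$ of the $\delta$-$H'$-Hopf-Galois structure, one identifies the coaction $\Delta_N \colon N \to N \otimes_R S \cong N \otimes_R S$ — wait, rather one translates the two maps $N \to N \otimes_R S \otimes_R S$ given by $n \mapsto \Delta_N(n) \otimes 1$ and the comodule structure into the single condition defining $N^{\co H'}$, so that $N^{\co H'} \otimes_R S \to N$ becomes, after base change along the faithfully flat $S/R$, an isomorphism. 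Concretely: to show $\varepsilon_N$ is injective and surjective it suffices to show $\varepsilon_N \otimes_R \id_S$ is, and over $S$ the equivariant object $N \otimes_R S$ is isomorphic (via $\can$) to a cofree comodule $N_0 \otimes_C H'$-type object whose coinvariants are visibly $N_0$; this is exactly the computation in \cite[Theorem 3.7]{schneider1990principal}, which I would cite and adapt, checking only that every map in sight respects $\delta$.

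Then I would verify the triangle identities $T'(\varepsilon_N) \circ \eta_{T'(N)} = \id$ and $\varepsilon_{T(M)} \circ T(\eta_M) = \id$, which are formal once $\eta$ and $\varepsilon$ are written out on elements: both reduce to $m \otimes 1 \mapsto m \otimes 1 \mapsto m$ and its analogue. Since $\eta$ and $\varepsilon$ are natural isomorphisms, this establishes the adjoint equivalence and hence the theorem. The main obstacle, as noted, is the core descent isomorphism — specifically checking that the Schneider-style argument goes through verbatim in the $\delta$-enriched setting, i.e., that the flatness needed is precisely $\delta$-faithful flatness (which \Cref{prop:pv-ff} and the surrounding discussion supply), that $\can_S$ being a $\delta$-algebra isomorphism is exactly what lets us transport the comodule structure, and that each $\Phi_i$, being a $\delta$-$R$-module map, survives the coinvariants and base-change operations without extra hypotheses. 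I do not anticipate difficulty beyond bookkeeping, but the argument must be stated carefully because the $\bphi$-structure maps $\Phi_i$ have mixed tensor powers of $M$ and $H$, so one must confirm that taking $(-)^{\co H'}$ commutes with forming $M^{\otimes n_{1i}} \otimes H^{\otimes n_{2i}}$ — which again follows from $\delta$-faithful flatness of $S/R$ applied to each tensor factor.
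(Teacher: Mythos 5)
Your proposal follows essentially the same route as the paper: the unit $m \mapsto m\otimes 1$ and counit $n\otimes s\mapsto ns$, shown to be isomorphisms by combining the exactness of the Amitsur complex (from $\delta$-faithful flatness) with the $\delta$-algebra isomorphism $\can_S$ to compare the coinvariant equalizer with the descent equalizer, exactly as in Schneider's Theorem 3.7, which the paper also adapts. The only differences are presentational (the paper phrases the key step as a comparison of two exact rows via $\can_S$ rather than invoking cofreeness, and dismisses the compatibility of the $\Phi_i$ with coinvariants and base change as immediate), so your argument is correct and matches the paper's.
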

\begin{proof}
Naturality is clear. We are left to check that the maps 
\[
\begin{array}{rcc}
\mu_N\colon  N^{\co H'}\otimes_{R}S& \to & N\\
n\otimes s&\mapsto & ns
\end{array}\quad \text{ and } \quad 
\begin{array}{rcc}
\iota_{M}\colon M& \to & (M\otimes_{R}S)^{\co H'}\\
m &\mapsto & m\otimes 1
\end{array}
\]
are bijections for all $M$ in $\bphi\Object$ and $N$ in $\bphi_{S}^{H'}\!\Object$.

Consider the following two commutative diagrams.
\begin{equation}\label{coinvar-1}
\begin{tikzcd}
N^{\co H'}\otimes_{R}S\ar[r]\ar[dd,"\mu_{N}"']  
& N\otimes_{R}S\ar[dd,"\id_N\underset{S}{\otimes}\can_{S}"]\ar[r,shift left=1.5]\ar[r,shift right=1.5]  
& (N\otimes_{S} H')\otimes_{R} S\ar[dd,"\id_{N\otimes H'}\underset{S}{\otimes}\can_{S}"]\\
\\
N\ar[r,"\Delta_{N}"']
& N\otimes_{S}H'\ar[r,shift left=1.5,"\Delta_{N\otimes H'}"]\ar[r,shift right=1.5,"\id_N\otimes \Delta_{H'}"'] 
& N\otimes_{S}H'\otimes_{S}H'\\
\end{tikzcd}
\end{equation}

\begin{equation}\label{coinvar-2}
\begin{tikzcd}
(M\otimes_{R} S)^{\co H'}\ar[r,"\iota"]
& M\otimes_{R}S\ar[r,shift left=1.5]\ar[r,shift right=1.5] 
& M\otimes_{R}S\otimes_{R}H'\\\\
M\ar[r,"\iota"']\ar[uu,"\iota_{M}"]  
& M\otimes_{R}S\ar[uu,equal]\ar[r,shift left=1.5,"\iota_{1}"]\ar[r,shift right=1.5,"\iota_{2}"']  
& M\otimes_{R} S\otimes_{R} S\ar[uu,"\id_M\otimes\,\can_{S}"']
\end{tikzcd}
\end{equation}
The vertical maps $\id_{N}\otimes_{S}\can_{S}$, $\id_{N\otimes H'}\otimes_{S}\can_{S}$, and $\id_M\otimes \can_{S}$ are isomorphisms since \eqref{eq:can-hopf} is an isomorphism.
The top rows of \eqref{coinvar-1} and \eqref{coinvar-2} are exact by the definition of $\co H'$ and also the $\delta$-flatness of $S/R$ in the case of \eqref{coinvar-1}. 
The bottom row of \eqref{coinvar-1} is exact by coassociativity of $\Delta_{N}$. Since $S/R$ is $\delta$-faithfully flat, the Amitsur complex in the bottom row of \eqref{coinvar-2} is exact. 
So $\mu_{N}$ and $\iota_{M}$ are isomorphisms.
\end{proof}

\subsection{Twisted forms}\label{subsection:tf}
Oftentimes, a given type of $\bphi$-objects is too varied for direct classification. Instead, we view two $\bphi$-objects as related if they become isomorphic after extension of scalars. 

\begin{definition}
    Let $S$ be a $\delta$-$R$-algebra, $\bphi$ be a type over $R$, and $M$ and $N$ be $\bphi$-objects. We call $N$ an \emph{$(S/R)$-twisted form} of $M$ if there exists an isomorphism $N\otimes_{R} S\cong M\otimes_{R} S$ of $\bphi_S$-objects. We let $\TF(S/R,M)$ denote the set of isomorphism classes of $(S/R)$-twisted forms of $M$.
\end{definition}

\begin{example}\label{ex-twisted-forms}\hfill
\begin{enumerate}
    \item Any $\delta$-module over $F$ is a $(F^{\PV}/F)$-twisted form of the trivial $\delta$-module $M$ of the same rank. Thus $\diffmod_{n}(F) = \TF(F^{\PV}/F, M)$.
    \item Let $H$ be a $\delta$-Hopf algebra over $R$. Let $S/R$ be a $\delta$-$H$-Hopf-Galois extension. Then $S$ is an $(S/R)$-twisted form of $H$ via the  isomorphism $\can_{S}$.
\end{enumerate}
\end{example}

$\delta$-torsors are another important example of twisted forms. Given a $\delta$-$R$-algebra $S$ and a linear algebraic group $G$ over $C$, we say that a $\delta$-$G_{R}$-torsor $X$ is an \emph{$(S/R)$-twisted form} of a $\delta$-$G_{R}$-torsor $Y$ if $X_{S}$ is isomorphic to $Y_{S}$ as $\delta$-$G_{S}$-torsors; equivalently, $R[X]$ is an $(S/R)$-twisted form of $R[Y]$ as $\delta$-$R[G]$-Hopf-Galois extensions. Given a $\delta$-$G_{R}$-torsor $X$, we let $\TF(S/R, X)$ denote the set of  isomorphism classes of $(S/R)$-twisted forms of $X$.

\begin{example}\label{ex:tors-twisted-forms}
Let $G$ be a linear algebraic group over $C$. By \Cref{prop:split_torsor}, any $\delta$-$G_F$-torsor is a $(F^{\PV}/F)$-twisted form of the trivial $\delta$-$G_F$-torsor $G_{F}$. In particular, $\delta\text{-}G_{F}\tors(F)= \TF(F^{\PV}/F, G_{F})$.
\end{example}

Finally, we compute the automorphism group of trivial $\bphi$-objects. Note that the following lemma already appears in \cite[Proposition 3.8]{masuoka2021twisted}. We present it according to our framework. 

\begin{lemma}
\label{lemma:automorphism_split}
    Let $\bphi$ be a type over $C$. Let $M$ be a trivial $\bphi_F$-object, i.e., there exists a $\bphi$-object $M^{\delta}$ such that $M = M^{\delta} \otimes_C F$. Then $\Aut^{\delta, \bphi}_R(M \otimes_F R) = \Aut_{R^{\delta}}(M^{\delta} \otimes_C R^{\delta})$ for any $\delta$-$F$-algebra $R$. In particular, if the functor 
    $$\underline{\Aut}_C(M^{\delta}) \colon (C\operatorname{-Algebras}) \to (\operatorname{Groups})\colon
        D \mapsto \Aut_D(M^{\delta} \otimes_C D)$$ is represented by a linear algebraic group $G$ over $C$, then $\Aut_F^{\delta, \bphi}(M) = G(C).$
\end{lemma}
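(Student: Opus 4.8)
The plan is to unwind the definitions and reduce everything to the non-differential statement about $\Aut_{R^\delta}$. First I would recall that $M = M^\delta \otimes_C F$ means the $\bphi_F$-object is trivial, so all the structure maps $\Phi_i$ on $M$ are obtained from structure maps $\Phi_i^\delta$ on $M^\delta$ by extension of scalars along $C \to F$. Given a $\delta$-$F$-algebra $R$, the object $M \otimes_F R = M^\delta \otimes_C R$ carries its $\bphi_R$-structure via $\Phi_i^\delta \otimes \id_R$, and a $\delta$-$R$-module automorphism $\varphi$ of $M \otimes_F R$ that preserves the type is in particular $R$-linear and commutes with $\delta$. The key observation is that an $R$-linear map $\varphi\colon M^\delta \otimes_C R \to M^\delta \otimes_C R$ commutes with the derivation $\delta$ (which acts only on the $R$-factor, since $M^\delta$ consists of constants) if and only if $\varphi$ is of the form $\psi \otimes \id_R$ for some $R^\delta$-linear map $\psi \colon M^\delta \otimes_C R^\delta \to M^\delta \otimes_C R^\delta$; equivalently, $\varphi$ restricts to an $R^\delta$-linear endomorphism of $(M^\delta \otimes_C R)^\delta = M^\delta \otimes_C R^\delta$ and is recovered from this restriction by extending scalars from $R^\delta$ to $R$.

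Next I would check that under this identification the condition ``$\varphi$ preserves the type $\bphi$'' corresponds exactly to ``$\psi$ preserves the structure maps of the $\bphi$-object $M^\delta$ over $R^\delta$.'' This is because the diagram expressing compatibility of $\varphi$ with $\Phi_i^\delta \otimes \id_R$ is obtained from the corresponding diagram for $\psi$ and $\Phi_i^\delta \otimes \id_{R^\delta}$ by applying the faithful functor $- \otimes_{R^\delta} R$, so one diagram commutes iff the other does. This gives the group isomorphism $\Aut^{\delta,\bphi}_R(M \otimes_F R) \cong \Aut_{R^\delta}(M^\delta \otimes_C R^\delta)$, natural in $R$; I should make sure to phrase the $\bphi$-structure being ``preserved'' precisely so that the $\delta$-$R$-module automorphisms in $\Aut^{\delta,\bphi}_R$ match up with (plain) $R^\delta$-module automorphisms in $\Aut_{R^\delta}$.

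For the final sentence, I would apply the isomorphism with $R = F$: then $R^\delta = F^\delta = C$ since $F$ has field of constants $C$, so $\Aut^{\delta,\bphi}_F(M) \cong \Aut_C(M^\delta \otimes_C C) = \Aut_C(M^\delta) = \underline{\Aut}_C(M^\delta)(C) = G(C)$, where the penultimate equality is the hypothesis that $\underline{\Aut}_C(M^\delta)$ is represented by $G$.

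\textbf{Main obstacle.} The routine but slightly delicate point is the structural lemma that a $\delta$-commuting $R$-linear endomorphism of $M^\delta \otimes_C R$ is necessarily ``constant,'' i.e.\ of the form $\psi \otimes \id_R$ with $\psi$ defined over $R^\delta$. In the finite-dimensional case one argues on a $C$-basis $e_1, \dots, e_m$ of $M^\delta$: writing $\varphi(e_j) = \sum_i e_i \otimes a_{ij}$ with $a_{ij} \in R$, the identity $\varphi(\delta(e_j \otimes r)) = \delta(\varphi(e_j \otimes r))$ for $r \in R$ forces $\delta(a_{ij}) = 0$, i.e.\ $a_{ij} \in R^\delta$. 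For general (possibly infinite-rank) $M^\delta$ one replaces the basis argument by the observation that $M^\delta \otimes_C R$ is a union of its finitely generated (over $C$) sub-$\bphi$-objects tensored up, or simply works with the adjunction $(-\otimes_C R, (-)^\delta)$ between $C$-modules and $\delta$-$R$-modules, whose unit $M^\delta \to (M^\delta \otimes_C R)^\delta$ is an isomorphism because $C = \ker(\delta|_R)$ is a field. I expect this is where I would need to be careful about whether $M^\delta$ is assumed finitely generated; if the paper's $\bphi$-objects are always finite rank over the base, the basis computation suffices and there is no real obstacle.
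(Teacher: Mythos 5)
Your proposal is correct and follows essentially the same route as the paper: identify the constants $(M^\delta\otimes_C R)^\delta = M^\delta\otimes_C R^\delta$, restrict a $\delta$-automorphism to an $R^\delta$-linear automorphism of the constants, and recover it by extension of scalars, then take $R=F$. Your extra care about the type-preservation matching up and about infinite rank (where a $C$-basis still exists, so the computation goes through unchanged) only makes explicit what the paper leaves implicit.
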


\begin{proof}
    First an easy computation shows that $(M \otimes_F R)^{\delta} = (M^{\delta} \otimes_C R)^{\delta} = M^{\delta} \otimes_C R^{\delta}$.

    An automorphism $\phi \in\Aut^{\delta, \bphi}_R(M \otimes_F R)$  preserves constants, so $\phi$ and $\phi^{-1}$ restrict to $R^{\delta}$-module automorphisms $\psi, \psi^{-1}$ on $M^{\delta} \otimes_C R^{\delta}$. Since $\phi\circ\phi^{-1} = \phi^{-1}\circ\phi = \id$
    on $M \otimes_F R$, $\phi$ and $\phi^{-1}$ are still inverses on $M^{\delta} \otimes_C R^{\delta}$. Moreover, extending $\psi$ $R$-linearly, $\psi \otimes_{R^{\delta}} \id_R$ recovers $\phi$. Hence, 
    \begin{equation*}
        \Aut^{\delta, \bphi}_{R}(M \otimes_F R) = \Aut_{R^{\delta}}(M^{\delta} \otimes_C R^{\delta}). 
    \end{equation*}  In particular, if $\underline{\Aut}_C(M^{\delta})$ is represented by a linear algebraic group $G$ over $C$, $\Aut^{\delta, \bphi}_F(M) = G(C)$ by taking $R = F$.
\end{proof}

This lemma in particular tells us that for a trivial $\delta$-$G_F$-torsor $X$, $\Daut_F(X) = G(C)$.

\section{Cohomology}\label{section:cohom}

We now define the cohomology that classifies the twisted forms of the previous section.

\begin{definition}\label{definition:cohom}
Let $\Gamma$ and $G$ be linear algebraic groups over $C$. Let $\Gamma$ act on $G$ (as varieties) in a way that is compatible with the group structure on $G$. A \emph{1-cocycle} is a morphism of varieties $a\colon \Gamma(C)\to G(C)$ satisfying the following condition: for any $\sigma\in \Gamma(C)$ we let $a_{\sigma}\coloneqq a(\sigma)$ and require that $a_{\sigma\tau} = a_{\sigma}\cdot \sigma(a_{\tau})$ holds for all $\sigma,\tau\in \Gamma(C)$. Two 1-cocycles $a$ and $b$ are \emph{equivalent} if there exists $c\in G(C)$ such that $a_{\sigma}=c\cdot b_{\sigma}\cdot c^{-1}$. We define $H^{1}_{\delta}(\Gamma,G)$ to be the set of 1-cocycles $\Gamma\to G$ modulo equivalence. 
\end{definition}

 We will adopt the following setup.

\begin{assumption}\label{assumption}
    Let $\Gamma$ be a linear algebraic group over $C$ with Hopf algebra $H'_0$ and let $S/R$ be a $\delta$-$(H'_0\otimes_{C}R)$-Hopf-Galois extension. Let $(M,\{\Phi_{i}\}_{i\in I})$ be a $\bphi$-object and set $M_S\coloneqq M\otimes_{R}S$. Furthermore assume that the automorphism group of $M$ is represented by a linear algebraic group $G$ over $C$, i.e., there exists an isomorphism
    \begin{equation}\label{repble}
        \Aut^{\delta, \bphi}(M_S\otimes_{C}D)\cong G(D)
    \end{equation}
that holds for every $C$-algebra $D$ and is functorial in $D$. We will always identify the two groups in \eqref{repble}.
\end{assumption}

\begin{remark}\label{conversion}
Note that \eqref{repble} gives a sequence of isomorphisms
\begin{eqnarray}
    \Aut^{\delta, \bphi}(M_S\otimes_{C}{H'_{0}}^{\otimes n})&\cong& G({H'_{0}}^{\otimes n})\\
    &\cong &\Hom_{C-\Alg}(C[G], {H'_{0}}^{\otimes n})
    \cong \Mor_{C-\text{Sch}}(\Gamma^{n}, G)\nonumber
\end{eqnarray}
which takes an element $f\in \Aut(M_S\otimes_{C}{H'_{0}}^{\otimes n})$ to the morphism $\Gamma^{n}\to G$ given on $C$-points by
\begin{equation}
    \begin{array}{rl}
\Gamma^{n}(C)&\to G(C)\\
(\sigma_{1},...,\sigma_{n})&\mapsto (\id_{M_S}\otimes \sigma_{1}\otimes \cdots \sigma_{n})_{*}f.
\end{array}
\end{equation}
Here $(\id_{M_S}\otimes \sigma_{1}\otimes \cdots \sigma_{n})_{*}f$ is the morphism obtained by extension of scalars such that the following diagram commutes:
\begin{equation}\label{eq:extend-scalars}
\begin{tikzcd}
M_S\otimes_{C} {H'_{0}}^{\otimes n}\ar[rr,"f"]\ar[d, "\id_{M_S}\otimes \sigma_{1}\otimes \cdots \otimes\sigma_{n}"']&& M_S\otimes_{C} {H'_{0}}^{\otimes n}\ar[d, "1_{M_S}\otimes \sigma_{1}\otimes \cdots \otimes\sigma_{n}"]\\
M_S\ar[rr,"(\id_{M_S}\otimes \sigma_{1}\otimes \cdots \otimes\sigma_{n})_*f"']&&M_S.
\end{tikzcd}
\end{equation}
\end{remark}

\begin{remark}
\Cref{assumption} gives an action of $\Gamma$ on $G$ that we now describe. First we define an action of $\Gamma(C)$ on $S$ by letting an element $\sigma\in \Gamma(C)$ act on $S$ via the automorphism $\boldsymbol{\sigma} = (1_{S}\otimes \sigma)\circ \Delta_{S}$:
\begin{equation}
\boldsymbol{\sigma}\colon S\xrightarrow{\Delta_{S}}S\otimes_{C}{H'_{0}}\xrightarrow{1_{S}\otimes \sigma}S.
\end{equation}
This action extends to an action of $\Gamma(C)$ on $M_S \coloneqq M\otimes_{R}S$. The action of $\Gamma(C)$ on $M_S$ further gives an action of $\Gamma$ on $G$ on the $C$-points by conjugation: for any $\sigma\in \Gamma(C)$ and $\varphi\in G(C)$, we define the action to be $\sigma(\varphi)\coloneqq \boldsymbol{\sigma}\circ \varphi\circ\boldsymbol{\sigma}^{-1}$. We will consider $G$ with this $\Gamma$-action when discussing the cohomology set $H^{1}_{\delta}(\Gamma,G)$.
\end{remark}

\begin{remark}

In the case $n=1$ in \Cref{conversion}, an element $a\in \Aut(M_S\otimes_{C}{H'_{0}})$ corresponds to an element of $\Mor_{C-\text{Sch}}(\Gamma,G)$ which we again denote by $a$. If for each $\sigma\in \Gamma(C)$ we let $a_{\sigma}\coloneqq a(\sigma)$, we have the equality 
$$
(\id_{M_S}\otimes\sigma)_{*}a = a_{\sigma}.
$$
The commutativity of the diagram
$$
\begin{tikzcd}
M_S\ar[r,"\Delta_{M_S}"]\ar[rd,"\boldsymbol{\sigma}"']& M_S\otimes_{C}{H'_{0}}\ar[r,"a"]\ar[d,"\id\otimes \sigma"]& M_S\otimes_{C}{H'_{0}}\ar[d,"\id\otimes \sigma"]\\
& M_S\ar[r,"a_{\sigma}"']& M_S
\end{tikzcd}
$$
further gives the equality
\begin{equation}\label{convert-important}
(\id_{M_S}\otimes\sigma)\circ(a\circ \Delta_{M_S})=a_{\sigma}\circ \boldsymbol{\sigma}
\end{equation}
which we will later use in \Cref{lemma:well-defined-G}.

\end{remark}

\begin{construction}\label{construction:F}
We define a map 
$$
\mathcal F\colon \TF(S/R,M)\to H^{1}_{\delta}(\Gamma,G)
$$
as follows. Let $(N,\varphi)$ be a twisted form of $M$. We define $\mathcal F(N,\varphi)$ to be the cocycle $a\colon \Gamma(C)\to G(C)$ which sends an element $\sigma$ in $\Gamma(C)$ to the element 
\[
    a_{\sigma}\varphi\circ \boldsymbol{\sigma}(\varphi)= \varphi\circ \boldsymbol{\sigma}\circ\varphi^{-1}\circ \boldsymbol{\sigma}^{-1}\text{ in }G(C).
\]

We must verify that $\mathcal F$ is well-defined. That $a$ is a cocycle follows from the standard computation in $G(C)$:
$$
\begin{array}{rl}
a_{\sigma} \cdot \sigma(a_{\tau})    & =a_{\sigma} \circ \boldsymbol{\sigma}\circ a_{\tau}\circ \boldsymbol{\sigma}^{-1} \\
     & = (\varphi\circ \boldsymbol{\sigma}\circ\varphi^{-1}\circ\boldsymbol{\sigma}^{-1}) \circ \boldsymbol{\sigma} \circ (\varphi\circ \boldsymbol{\tau}\circ\varphi^{-1}\circ\boldsymbol{\tau}^{-1})\circ \boldsymbol{\sigma}^{-1} \\
     & = \varphi\circ (\boldsymbol{\sigma}\circ\boldsymbol{\tau})\circ\varphi^{-1}\circ(\boldsymbol{\sigma}\circ \boldsymbol{\tau})^{-1}\\
     & = a_{\sigma\tau}
\end{array}
$$
which holds for all $\sigma,\tau\in \Gamma(C)$.

Next let $(N',\psi)$ be isomorphic to $(N,\varphi)$ as twisted forms of $M$ and let $b = \mathcal F(N',\psi)$. Setting $c = \psi\circ \varphi^{-1}$, we have
$$
\begin{array}{rl}
   c^{-1}\circ b_{\sigma}\circ \sigma(c)  &= c^{-1}\circ b_{\sigma}\circ \boldsymbol{\sigma}\circ c\circ \boldsymbol{\sigma}^{-1}  \\
   & = (\varphi\circ \psi^{-1})\circ (\psi\circ \boldsymbol{\sigma}\circ\psi^{-1}\circ\boldsymbol{\sigma}^{-1})\circ \boldsymbol{\sigma}\circ (\psi\circ \varphi^{-1})\circ \boldsymbol{\sigma}^{-1}\\
    & = \varphi\circ \boldsymbol{\sigma}\circ\varphi^{-1}\circ\boldsymbol{\sigma}^{-1} \\
     &  = a_{\sigma}
\end{array}
$$
for all $\sigma\in \Gamma(C)$. Thus $\mathcal F$ takes equivalent twisted forms to equivalent cocycles. We conclude that $\mathcal F$ is well-defined.
\end{construction}

\begin{construction}\label{construction:G}
We define a map \begin{equation}
\mathcal G:H^{1}_{\delta}(\Gamma,G)\to \TF(S/R,M)
\end{equation}
as follows. Given a cocycle $a$ representing an element of $H^{1}_{\delta}(\Gamma,G)$, we define the $\delta$-$R$-module
\begin{equation}
\mathcal G(a) \coloneqq  \left\lbrace m\in M_S\;\left\vert\; (a_{\sigma}\circ \boldsymbol{\sigma})(m)=m \text{ for all }\sigma\in \Gamma(C)\right.\right\rbrace.
\end{equation}
\end{construction}

\bigskip

We will soon check that $\mathcal G$ is a well-defined map (in \Cref{lemma:well-defined-G}). Our proofs of \Cref{lemma:well-defined-G} and \Cref{thm:F-G-inverse} below follow that of \cite[Theorem 2.6]{nuss2007non} where a cohomology set was introduced to classify Hopf-Galois extensions for noncommutative rings. The following lemma allows us to convert from their ``cochains'' which are maps $M_S\to M_S\otimes_{C} {H'_{0}}$ to our cochains which are morphisms $\Gamma\to G$.

\begin{lemma}\label{lemma:equality}
Let $M$ be a $C$-vector space, $X$ an algebraic variety over $C$, and $f,g\in M\otimes_{C}C[X]$. If $(1_{M}\otimes \sigma)f=(1_{M}\otimes \sigma)g$ for all $\sigma\in X(C)$ then $f=g$ in $M\otimes_{C}C[X]$.
\end{lemma}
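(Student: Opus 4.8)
The statement is a Zariski-density / separation fact: a vector $f \in M \otimes_C C[X]$ is determined by its images under all $C$-points $\sigma \in X(C)$, provided these images all vanish (after subtracting $g$), because $C$ is algebraically closed. The plan is to reduce to the case $g = 0$ by linearity: setting $h = f - g$, I must show that if $(1_M \otimes \sigma)h = 0$ for all $\sigma \in X(C)$, then $h = 0$. First I would pick a $C$-basis $\{e_\lambda\}_{\lambda}$ of $M$ and write $h = \sum_\lambda e_\lambda \otimes h_\lambda$ with $h_\lambda \in C[X]$, only finitely many nonzero. Then $(1_M \otimes \sigma)h = \sum_\lambda e_\lambda \otimes \sigma(h_\lambda)$, where I view $\sigma \colon C[X] \to C$ as evaluation at the point $\sigma$. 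Since the $e_\lambda$ are linearly independent over $C$, this expression vanishes if and only if $\sigma(h_\lambda) = 0$ for every $\lambda$. So the claim reduces to: a regular function $h_\lambda \in C[X]$ that vanishes at every $C$-point of $X$ is the zero function.

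This last reduction is exactly the statement that $C[X] \to \prod_{\sigma \in X(C)} C$ is injective, i.e. the Jacobson/Nullstellensatz property that the nilradical of $C[X]$ equals the intersection of all maximal ideals with residue field $C$. Since $C$ is algebraically closed and $X$ is a variety (finite type over $C$), by the Nullstellensatz every maximal ideal of $C[X]$ has residue field $C$ and corresponds to a $C$-point, so $\bigcap_{\mathfrak m} \mathfrak m = \mathrm{nil}(C[X])$. If the paper's convention is that ``variety'' includes reducedness, then $\mathrm{nil}(C[X]) = 0$ and we are done immediately; if not, I would note that $C[X]$ may a priori have nilpotents but the argument still gives what is needed after passing to $C[X]_{\mathrm{red}}$ — however, in the intended application $X = G$ is a reduced (even smooth) algebraic group, so I will just invoke reducedness. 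Either way, $h_\lambda = 0$ for all $\lambda$, hence $h = 0$ and $f = g$.

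I do not expect any genuine obstacle here; the lemma is elementary once unwound. The only mild subtlety is bookkeeping: making sure the passage from ``$(1_M \otimes \sigma)f = (1_M \otimes \sigma)g$ for all $\sigma$'' to ``each coordinate function agrees'' uses only linear independence of a basis of $M$ over $C$ (so that $M \otimes_C C[X] = \bigoplus_\lambda C[X]$ as $C[X]$-modules and evaluation at $\sigma$ acts coordinatewise), together with the standing hypothesis that $C$ is algebraically closed so the Nullstellensatz applies. I would write the proof in three short sentences along these lines.
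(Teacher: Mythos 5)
Your proposal is correct and follows essentially the same route as the paper: reduce to the case $M=C$ via a choice of $C$-basis and linear independence, then invoke the fact that a regular function on a variety over an algebraically closed field is determined by its values on $C$-points. The paper states this last fact without naming the Nullstellensatz and does not bother with the reducedness caveat (its varieties are reduced by convention, and the application is to smooth group varieties), but the substance is identical.
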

\begin{proof}
Since $X$ is an algebraic variety over an algebraically closed field $C$, equality of functions on $X(C)$ implies equality of elements in $C[X]$. This gives the case $M=C$.

For a general $M$, let $\{m_{i}\}_{i\in I}$ be a basis of $M$ over $C$ and write $f=\sum m_{i}\otimes f_{i}$ and $g=\sum m_{i}\otimes g_{i}$ for some $f_{i},g_{i}\in C[X]$. For all $\sigma\in X(C)$, we have $(1_{M}\otimes \sigma)(f) = (1_{M}\otimes \sigma)(g)$ hence $\sum m_{i}\otimes \sigma(f_{i}) = \sum m_{i}\otimes \sigma(g_{i})$. The linear independence of $\{m_{i}\}_{i\in I}$ over $C$ and the previous paragraph now give $f_i=g_{i}$ for all $i\in I$.
\end{proof}

\begin{lemma}\label{lemma:well-defined-G}
The map $\mathcal G$ in \Cref{construction:G} is well-defined.
\end{lemma}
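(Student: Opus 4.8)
The plan is to show that for a $1$-cocycle $a\colon \Gamma\to G$, the $\delta$-$R$-module $\mathcal G(a) = \{m\in M_S \mid (a_\sigma\circ\boldsymbol\sigma)(m) = m \text{ for all }\sigma\in\Gamma(C)\}$ is actually an $(S/R)$-twisted form of $M$, i.e., that it comes equipped with a $\bphi$-structure and that $\mathcal G(a)\otimes_R S \cong M\otimes_R S = M_S$ as $\bphi_S$-objects, and moreover that equivalent cocycles give isomorphic twisted forms. The key idea is to reinterpret the ``twisted'' action $a_\sigma\circ\boldsymbol\sigma$ on $M_S$ as a new $\delta$-$H'_0$-comodule structure on $M_S$, so that $\mathcal G(a)$ is precisely the module of coinvariants of this new comodule, and then to invoke the descent equivalence of \Cref{thm:descent}.

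First I would use \Cref{lemma:equality} together with the identity \eqref{convert-important} to package the automorphism $a$ of $M_S\otimes_C H'_0$ into a new coaction map $\Delta_{M_S}^{a}\colon M_S\to M_S\otimes_C H'_0$, defined so that on $C$-points it reproduces $m\mapsto (a_\sigma\circ\boldsymbol\sigma)(m)$ after applying $1_{M_S}\otimes\sigma$; concretely $\Delta^a_{M_S}$ should be $a\circ\Delta_{M_S}$ suitably normalized. The cocycle condition $a_{\sigma\tau} = a_\sigma\cdot\sigma(a_\tau)$ is exactly what is needed for coassociativity $(1\otimes\Delta_{H'_0})\circ\Delta^a_{M_S} = (\Delta^a_{M_S}\otimes 1)\circ\Delta^a_{M_S}$ — this is a diagram chase, where one uses \Cref{lemma:equality} to reduce the identity of maps into $M_S\otimes_C {H'_0}^{\otimes 2}$ to the identity of their evaluations at all pairs $(\sigma,\tau)$, which is the computation already carried out in \Cref{construction:F}. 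The counit axiom follows since $a$ restricts to the identity over the identity element of $\Gamma$ (as $a_e$ is conjugation by $\boldsymbol e = \id$). One then checks that each structure map $\Phi_i$ of the $\bphi_S$-object $M_S$ is $H'_0$-equivariant for this new coaction: since $a_\sigma$ is an automorphism of the $\bphi$-object $M_S\otimes_C H'_0$, it commutes with $\Phi_i\otimes\id$, and $\boldsymbol\sigma$ commutes with $\Phi_i\otimes\id_S$ as well (the $\Phi_i$ are $\delta$-$R$-module maps tensored up, hence commute with the $S$-coaction $\Delta_S$), so the twisted coaction is compatible with the $\bphi_S$-structure. This makes $(M_S,\{\Phi_i\}, \Delta^a_{M_S})$ an object of $\bphi_S^{H'_0}\!\Object$.

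With this in hand, $\mathcal G(a) = (M_S)^{\co H'_0}$ with respect to $\Delta^a_{M_S}$, so \Cref{thm:descent} immediately gives that $\mathcal G(a)$ carries a canonical $\bphi$-object structure and that $\mathcal G(a)\otimes_R S \cong M_S$ as $\bphi_S^{H'_0}$-objects, in particular as $\bphi_S$-objects. Hence $\mathcal G(a)$ is an $(S/R)$-twisted form of $M$, and the choice of descent isomorphism gives a distinguished $\varphi$, so $(\mathcal G(a),\varphi)\in\TF(S/R,M)$. Finally, to see that $\mathcal G$ descends to a map on equivalence classes: if $a$ and $b = c\cdot a\cdot c^{-1}$ (with the appropriate twisted-conjugation, $b_\sigma = c\, a_\sigma\,\sigma(c)^{-1}$ or $c^{-1} b_\sigma \sigma(c) = a_\sigma$ as in \Cref{construction:F}) are equivalent cocycles, then the automorphism $c\in G(C) = \Aut^{\delta,\bphi}_F(M_S)$, viewed as an $S$-semilinear... more precisely as a $\bphi_S$-automorphism of $M_S$, intertwines the two twisted coactions $\Delta^a$ and $\Delta^b$ and therefore restricts to an isomorphism $\mathcal G(a)\xrightarrow{\sim}\mathcal G(b)$ of $\bphi$-objects compatible with the twisted forms structure; this is again a short diagram chase using \eqref{convert-important}.

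The main obstacle I expect is the bookkeeping in the first paragraph: correctly defining the twisted coaction $\Delta^a_{M_S}$ as an honest $\delta$-$R$-linear map into $M_S\otimes_C H'_0$ (rather than merely as a family of operators indexed by $\Gamma(C)$) and verifying coassociativity at the level of maps. The passage between the two descriptions — operators $a_\sigma\circ\boldsymbol\sigma$ on $M_S$ versus a single comodule map — is exactly what \Cref{lemma:equality} is designed to handle, and the cocycle identity is precisely coassociativity after this translation; nonetheless one has to be careful that the translation is compatible with the $\delta$-structure and with extension of scalars \eqref{eq:extend-scalars}, and that $\mathcal G(a)$ really equals the coinvariants of the twisted comodule and not something slightly off. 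Once that identification is pinned down, everything else reduces to applying \Cref{thm:descent} and to the conjugation computation already present in \Cref{construction:F}.
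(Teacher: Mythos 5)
Your proposal is correct and follows essentially the same route as the paper: define the twisted coaction $a\circ\Delta_{M_S}$, use \Cref{lemma:equality} and \eqref{convert-important} to reduce the Hopf-module axioms (counit, $S$-semilinearity, coassociativity) to pointwise identities where the cocycle condition does the work, identify $\mathcal G(a)$ with the coinvariants, apply \Cref{thm:descent}, and handle equivalence of cocycles via conjugation by $c\in G(C)$. No substantive differences.
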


\begin{proof} \emph{Step 1.}
We first check that given a cocycle $a$, $\mathcal G(a)$ is a twisted form of $M$. First note that $a$ satisfies the following properties:
\begin{enumerate}
    \item[(a)] $(a_{\sigma}\circ\boldsymbol{\sigma})(ms) = (a_{\sigma}\circ \boldsymbol{\sigma})(m)\boldsymbol{\sigma}(s)$ for all $\sigma\in \Gamma(C)$, $m\in M_S$, and $s\in S$;
    \item[(b)] $a_{1} = 1_{M_S}$;
    \item[(c)] $a_{\sigma\tau}=a_{\sigma}\circ \boldsymbol{\sigma}\circ a_{\tau}\circ \boldsymbol{\sigma}^{-1}$ for all $\sigma, \tau\in \Gamma(C)$.
\end{enumerate}
Here (c) is the cocycle condition for $a$, (b) follows from (c) by letting $\sigma=\tau=1$ in ${\Gamma(C)}$, and (a) follows from the $S$-linearity of $a_{\sigma}$. 

We claim that the composite map
$\Delta'$ given by $a\circ \Delta_{M_S}\colon M_S\to M_S\otimes_C {H'_{0}}$
defines a coaction on $M_S$ making $M_S$ a $\delta$-$({H'_{0}},S)$-Hopf module. In other words, we must verify the following properties:
\begin{enumerate}
    \item[(A)] $\Delta'(ms)=\Delta'(m)\Delta(s)$ for all $m\in M_S, s\in S$;
    \item[(B)] $(1_{M_S}\otimes \epsilon_{{H'_{0}}})\circ \Delta' = 1_{M_S}$;
    \item[(C)] $(\Delta'\otimes 1_{{H'_{0}}})\circ \Delta' = (1_{M_S}\otimes \Delta_{{H'_{0}}})\circ \Delta'$.
\end{enumerate}
\bigskip
Since $\epsilon_{{H'_{0}}}\colon {H'_{0}}\to C$ corresponds to $1\in \Gamma(C)$, (B) follows from (b) by \eqref{convert-important}.
\bigskip

To show (A) and (C), by \Cref{lemma:equality}, it suffices to show that the equalities obtained by applying $(1_{M_S}\otimes \sigma)$ to (A) and $(1_{M_S}\otimes \sigma\otimes \tau)$ to (C) hold for all $\sigma,\tau\in \Gamma(C)$. Applying $(1_{M_S}\otimes \sigma)$ to (A) and simplifying by \eqref{convert-important} gives (a). Thus (A) holds. Similarly, applying $1\otimes (\sigma \circ\tau)$ to the right side of (C) gives
\begin{equation}\label{eq:left}
    (1_{M_S}\otimes \sigma \otimes\tau)\circ (1_{M_S}\otimes \Delta_{{H'_{0}}})\circ \Delta' = (1_{M_S}\otimes (\sigma \circ\tau))\circ \Delta' = a_{\sigma\tau}\circ (\boldsymbol{\sigma\circ\tau}).
\end{equation}
Applying $1_{M_S}\otimes (\sigma \circ\tau)$ to the left side of (C) gives
\begin{equation}\label{eq:right}
    a_{\sigma}\circ \boldsymbol{\sigma}\circ a_{\tau}\circ \boldsymbol{\sigma}^{-1}
\end{equation}
since the following diagram commutes:
$$\begin{tikzcd}
M_S\ar[d,"\Delta'"']\ar[rrd,"a_{\tau}\circ \boldsymbol{\tau}"]   &    &      \\
M_S\otimes_{C} {H'_{0}} \ar[d,"\Delta'\otimes 1_{{H'_{0}}}"'] \ar[rr,"1\otimes\tau"]\ar[rd,"(a_{\sigma}\circ \boldsymbol{\sigma})\otimes 1_{{H'_{0}}}"]          &       &     M_S\ar[d,"a_{\sigma}\circ \boldsymbol{\sigma}"]  \\
M_S\otimes_{C} {H'_{0}}\otimes_{C} {H'_{0}}\ar[r,"1\otimes\sigma\otimes 1"'] &M_S\otimes_C {H'_{0}}\ar[r,"1\otimes\tau"']\ar[ru,phantom, "(I)"] & M_S
\end{tikzcd}.
$$
Here, region $(I)$ commutes by the computation
\begin{align*}
    ((a_{\sigma}\circ \boldsymbol{\sigma})\circ (1\otimes\tau))(m\otimes h) & = ((a_{\sigma}\circ \boldsymbol{\sigma})(m\cdot\tau(h))\\
    & = ((a_{\sigma}\circ \boldsymbol{\sigma})(m))\cdot\tau(h)\\
    & = ((1\otimes\tau)\sigma(a_{\sigma}\circ \boldsymbol{\sigma}))(m\otimes h).
\end{align*}
% where the second equality follows from $C$-linearity of the maps involved and that  $\tau(h)\in C$. 

Equating \eqref{eq:left} with \eqref{eq:right} gives (c). Thus (C) holds. This concludes checking that $\Delta'$ defines a coaction on $M_S$ making $M_S$ a $\delta$-$({H'_{0}},S)$-Hopf module.

\Cref{thm:descent} implies that the coinvariant module $(M_S)^{\co \Delta'}$ is a twisted form of $M$ over $R$. Therefore it suffices to show that $\mathcal{G}(a)$ equals $(M_S)^{\co \Delta'}$. Any $m\in (M_S)^{\co \Delta'}$ satisfies $\Delta'(m)=m\otimes 1$. For any $\sigma\in \Gamma(C)$, applying $(1\otimes\sigma)$ to $\Delta'(m)=m\otimes 1$ and simplifying by \eqref{convert-important} gives $(a_{\sigma}\circ \sigma)(m) = m$. Thus $(M_S)^{\co \Delta'} \subseteq \mathcal{G}(a)$. Invoking \Cref{lemma:equality} gives the reverse inclusion and so $(M_S)^{\co \Delta'} = \mathcal{G}(a)$.

\emph{Step 2. The map $\mathcal G$ takes equivalent cocycles to isomorphic twisted forms.} Two cocycles $a$ and $b$ are equivalent means that $b = c\circ a\circ c^{-1}$ for some $c\in G(C) = \Aut^{\delta, \bphi}(M_S)$. The automorphism $c\colon M_S\to M_S$ restricts to an isomorphism $\mathcal{G}(a)\cong \mathcal{G}(b)$.

\end{proof}

Here is the main theorem of this section.

\begin{theorem}\label{thm:F-G-inverse}
Consider the above setup. The maps $\mathcal F$ and $\mathcal G$ are inverses. Hence there is a bijection between the two sets $\TF(S/R,M)$ and $H^{1}_{\delta}(\Gamma,G)$.
\end{theorem}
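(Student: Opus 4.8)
The plan is to show that the two maps $\mathcal F\colon \TF(S/R,M)\to H^1_\delta(\Gamma,G)$ and $\mathcal G\colon H^1_\delta(\Gamma,G)\to \TF(S/R,M)$ constructed above are mutually inverse. By \Cref{construction:F}, \Cref{construction:G}, and \Cref{lemma:well-defined-G}, both maps are already known to be well-defined on isomorphism (resp. equivalence) classes, so it remains to verify the two round-trip identities $\mathcal G\circ\mathcal F=\id$ and $\mathcal F\circ\mathcal G=\id$ on the level of representatives.

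First I would tackle $\mathcal F\circ\mathcal G=\id$. Start with a cocycle $a$ and set $N=\mathcal G(a)=\{m\in M_S\mid (a_\sigma\circ\boldsymbol\sigma)(m)=m\ \forall\sigma\in\Gamma(C)\}$. In the course of proving \Cref{lemma:well-defined-G} it was shown that $N=(M_S)^{\co\Delta'}$ where $\Delta'=a\circ\Delta_{M_S}$, and that $\Delta'$ makes $M_S$ into a $\delta$-$(H_0',S)$-Hopf module; moreover \Cref{thm:descent} gives that the structure map $\mu_N\colon N\otimes_R S\to M_S$ is an isomorphism of $\bphi_S$-objects, so $(N,\mu_N^{-1})$ — or rather the twisted-form datum $(N,\varphi)$ with $\varphi$ the isomorphism $N\otimes_R S\xrightarrow{\sim} M\otimes_R S$ coming from $\mu_N$ followed by the identification $M_S=M\otimes_R S$ — is a genuine twisted form. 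Now compute $\mathcal F(N,\varphi)$: by definition it is the cocycle $\sigma\mapsto \varphi\circ\boldsymbol\sigma\circ\varphi^{-1}\circ\boldsymbol\sigma^{-1}$. The point is that the $\Gamma$-action $\boldsymbol\sigma$ transported through $\varphi$ to $M_S$ is exactly $a_\sigma\circ\boldsymbol\sigma$, because $\varphi$ identifies the "trivial" coaction $\Delta_{M_S}$ on the ambient module with the twisted coaction $\Delta'=a\circ\Delta_{M_S}$; chasing \eqref{convert-important} then yields $\mathcal F(N,\varphi)_\sigma=a_\sigma$, as desired.

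Second, $\mathcal G\circ\mathcal F=\id$. Start with a twisted form $(N,\varphi)$, let $a=\mathcal F(N,\varphi)$ so $a_\sigma=\varphi\circ\boldsymbol\sigma\circ\varphi^{-1}\circ\boldsymbol\sigma^{-1}$, and consider $\mathcal G(a)=\{m\in M_S\mid (a_\sigma\circ\boldsymbol\sigma)(m)=m\}$. Since $a_\sigma\circ\boldsymbol\sigma=\varphi\circ\boldsymbol\sigma\circ\varphi^{-1}$, the fixed-point condition reads $\varphi^{-1}(m)$ is fixed by $\boldsymbol\sigma$ for all $\sigma\in\Gamma(C)$; by \Cref{lemma:equality} (applied, as in \Cref{lemma:well-defined-G}, to detect coinvariance from the $C$-points of $\Gamma$) this says exactly that $\varphi^{-1}(m)\in (N\otimes_R S)^{\co H_0'}$, which by \Cref{thm:descent} is $N$ embedded via $\iota_N$. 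Hence $\varphi$ restricts to a bijection $N\xrightarrow{\sim}\mathcal G(a)$, and one checks it is a morphism of $\bphi$-objects (it is the restriction of the $\bphi_S$-isomorphism $\varphi$, which preserves all the $\Phi_i$), so $(N,\varphi)$ and $\mathcal G(a)$ represent the same class in $\TF(S/R,M)$.

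The main obstacle I anticipate is bookkeeping rather than any deep difficulty: keeping straight the identification in \eqref{repble} of $\Aut^{\delta,\bphi}(M_S\otimes_C D)$ with $G(D)$, the passage between "cochains" $M_S\to M_S\otimes_C H_0'$ and morphisms $\Gamma\to G$ via \Cref{conversion} and \eqref{convert-important}, and the fact that the coinvariants $(M_S)^{\co\Delta'}$ computed with the twisted coaction literally coincide with the subset cut out by the $C$-point conditions $(a_\sigma\circ\boldsymbol\sigma)(m)=m$ — this last equivalence is precisely where \Cref{lemma:equality} is indispensable and must be invoked in both directions. Once these translations are set up cleanly (most of the work already done inside \Cref{lemma:well-defined-G}), both composites collapse to the identity by direct substitution, and \Cref{thm:descent} supplies the fact that $\mathcal G(a)$ is a genuine $\bphi$-object (not merely a $\delta$-$R$-module) and that $\mu$ and $\iota$ are the relevant isomorphisms.
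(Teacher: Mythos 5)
Your proposal is correct and follows essentially the same route as the paper's proof: both directions reduce to the identity $a_\sigma\circ\boldsymbol\sigma=\varphi\circ\boldsymbol\sigma\circ\varphi^{-1}$ on $M_S$, with \Cref{lemma:equality} and \eqref{convert-important} converting between $C$-point equalities and equalities of coactions, and with descent/$\delta$-faithful flatness (your use of $\iota_N$ from \Cref{thm:descent} versus the paper's direct appeal to $\delta$-faithful flatness of $S/R$ to upgrade $\varphi\vert_N\otimes 1_S$ to $\varphi\vert_N$) supplying the identification $N\cong\mathcal G(\mathcal F(N,\varphi))$. The only difference is presentational, so no further comment is needed.
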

\begin{proof}
We first check $\mathcal G\circ \mathcal F = 1$. Let $(N,\varphi)$ be a twisted form of $M_S$ with associated cocycle $a = \mathcal F(N,\varphi)$. Set $P\coloneqq \mathcal G(a)$. We want to show $P \cong N$. Clearly the isomorphism $\varphi\colon N\otimes S\to M_S$ has image in $P$, so $\varphi$ restricts to $\varphi\vert_{N}\colon N\to P$. Consider the commutative diagram $$
\begin{tikzcd}
N\otimes S\ar[rr,"\varphi\vert_{N}\otimes 1_{S}"]\ar[rrd,"\varphi"',"\cong"]&&P\otimes S\ar[d,"\text{mult.}\cong"]\\
&& M_S.
\end{tikzcd}
$$
By definition of twisted form, the vertical map is an isomorphism so $\varphi\vert_{N}\otimes 1_{S}$ is too. By $\delta$-faithful flatness of $S/R$, $\varphi\vert_{N}$ is an isomorphism.

We now check $\mathcal F\circ \mathcal G = 1$. Given a cocycle $a$, let $(N,\varphi) \coloneqq  \mathcal G(a)$ and $b\coloneqq  \mathcal F(N,\varphi)$. Given $\sigma\in \Gamma(C)$, consider the diagram
$$
\begin{tikzcd}
M_S\ar[rrr,"b_{\sigma}\circ \boldsymbol{\sigma}"]\ar[dd,equal] & & & M_S\ar[dd,equal]\\
 & N\ar[ul,"\varphi"']\ar[dl,"\varphi"]\ar[r,"\boldsymbol{\sigma}"] & N\ar[ur,"\varphi"]\ar[dr,"\varphi"']& \\
M_S\ar[rrr,"a_{\sigma}\circ \boldsymbol{\sigma}"'] & & & M_S
\end{tikzcd}.
$$
The triangles here trivially commute. The upper trapezoid commutes by definition of $b_{\sigma}$. The bottom trapezoid commutes by the following two computations. For all $n\in N$ and $s\in S$, we have
\[
    \varphi(\boldsymbol{\sigma}(n\otimes s)) = \varphi((n\otimes 1)(1\otimes \sigma(s))) = n\sigma(s)
\]
where the second equality uses the isomorphism $N\otimes S\cong M_S$ given by scalar multiplication. Also
\[
    (a_{\sigma}\circ \boldsymbol{\sigma}\circ\varphi)(n\otimes s)
    = (a_{\sigma}\circ \boldsymbol{\sigma})(ns)
    = (a_{\sigma}\circ \boldsymbol{\sigma})(n)\sigma(s)
    = n\sigma(s)
\]
where the second equality uses property (a) in the proof of \Cref{lemma:well-defined-G} and the third equality uses the definition of $N$. Therefore the diagram above is commutative so $b_{\sigma}\circ \boldsymbol{\sigma} = a_{\sigma}\circ \boldsymbol{\sigma}$ for all $\sigma\in \Gamma(C)$. By \eqref{convert-important} and \Cref{lemma:equality}, we have $b = a$.
\end{proof}

\subsection{Absolute cohomology}
While \Cref{thm:F-G-inverse} is stated for twisted forms over $\delta$-Hopf-Galois extensions, which includes Picard-Vessiot ring extensions, sometimes it is easier to work with twisted forms over Picard-Vessiot field extensions.

\begin{theorem}\label{thm:tf-over-fields}
Let $R/F$ be a Picard-Vessiot ring extension with $\delta$-Galois group $\Gamma$. Let $K=\Frac(R)$, $\pmb{\Phi}$ be a type, and $M$ be a $\pmb{\Phi}$-object over $F$. Suppose that the automorphism group of $M\otimes_{F} R$ is represented by a linear algebraic group $G$ over $C$. Then \Cref{construction:F} adapts directly to give a bijection
\begin{equation}\label{eq:tf-field}
    \TF(K/F, M)\to H^{1}_{\delta}(\Gamma, G).
\end{equation} 
\end{theorem}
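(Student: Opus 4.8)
The plan is to reduce the field-version statement (\Cref{thm:tf-over-fields}) to the ring-version already established in \Cref{thm:F-G-inverse}, by exhibiting a bijection $\TF(K/F, M) \cong \TF(R/F, M)$ that is compatible with the constructions $\mathcal F$ and $\mathcal G$. The key observation is that $R/F$ is a $\delta$-$F[\Gamma]$-Hopf-Galois extension with $\delta$-Galois group $\Gamma$, so \Cref{thm:F-G-inverse} applies directly to $S = R$ and gives $\TF(R/F, M) \cong H^1_\delta(\Gamma, G)$; meanwhile $K = \Frac(R)$ is also a $\delta$-$K[\Gamma]$-Hopf-Galois extension by the Torsor Theorem, with the \emph{same} $\delta$-Galois group $\Gamma$ (recall from \Cref{section:pv} that $\UDaut(R/F)$ and $\UDaut(\Frac(R)/F)$ are represented by the same linear algebraic group). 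So the only real content is matching up twisted forms over $R$ with twisted forms over $K$.

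First I would set up the comparison map $\TF(R/F, M) \to \TF(K/F, M)$ sending a twisted form $(N, \varphi)$ with $\varphi \colon N \otimes_F R \xrightarrow{\sim} M \otimes_F R$ to $(N, \varphi \otimes_R K)$, using that $M \otimes_F K = (M \otimes_F R) \otimes_R K$ and $N \otimes_F K = (N \otimes_F R) \otimes_R K$; this is well-defined since base change of an isomorphism of $\bphi$-objects is an isomorphism of $\bphi$-objects. Next I would show this map is a bijection. For surjectivity: given a twisted form over $K$, one must descend it to a twisted form over $R$; here I would argue that since $N$ is a finitely generated $\delta$-$F$-module (or more generally a $\bphi$-object over the field $F$) it is already defined over $F$, and the issue is only whether the trivialization can be taken over $R$ rather than $K$ --- and this follows because the relevant cocycle, built via \Cref{construction:F} applied over $K$, is a morphism $\Gamma \to G$ of varieties over $C$ and hence lands in $\Aut^{\delta,\bphi}(M \otimes_C H'_0) = \Aut^{\delta,\bphi}_{F[\Gamma]}(M \otimes_F F[\Gamma])$, which already ``lives over $R$'' since $R \otimes_F R \cong R \otimes_C C[\Gamma]$ by the Torsor Theorem. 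Concretely, $\mathcal G$ applied to such a cocycle produces a submodule of $M \otimes_F R$ (not just of $M \otimes_F K$), because the defining equations $(a_\sigma \circ \boldsymbol\sigma)(m) = m$ cut out an $F$-form inside $M \otimes_F R$ already. For injectivity: if two twisted forms become isomorphic over $K$, apply $\mathcal F$ over $K$ to see they yield equivalent cocycles, then apply $\mathcal G$ over $R$ (using \Cref{thm:F-G-inverse} for $S = R$) to recover isomorphic twisted forms over $R$.

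Alternatively --- and this is probably cleaner to write --- I would avoid constructing the bijection $\TF(R/F,M) \cong \TF(K/F,M)$ directly and instead re-run \Cref{construction:F}, \Cref{construction:G}, \Cref{lemma:well-defined-G}, and \Cref{thm:F-G-inverse} verbatim with $S$ replaced by $K$, checking that every step goes through. The crucial inputs that need to survive the replacement are: (i) $K/F$ is $\delta$-faithfully flat, which is \Cref{prop:pv-ff}; (ii) $K/F$ is a $\delta$-$K[\Gamma]$-Hopf-Galois extension with the canonical map an isomorphism, which is the field form of the Torsor Theorem together with the fact that the $\delta$-Galois group is unchanged; (iii) the descent theorem \Cref{thm:descent} applies to $K/F$, which it does since that theorem only requires a $\delta$-Hopf-Galois extension. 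With these in hand, the cocycle computations in \Cref{construction:F} and the Hopf-module verifications (A)--(C) in \Cref{lemma:well-defined-G} are identical, and the inverse-pair argument in \Cref{thm:F-G-inverse} carries over word for word. That is what "adapts directly" in the statement refers to.

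The main obstacle is making precise the claim that automorphisms of $M \otimes_F R$ and automorphisms of $M \otimes_F K$ are governed by the same linear algebraic group $G$ --- i.e., that \Cref{assumption}\eqref{repble} holds with $S = K$ once it holds with $S = R$. One has $\Aut^{\delta,\bphi}_{F\otimes_C D}(M \otimes_C D) $ computed after tensoring with $R$ versus with $K$, and one needs the representing group to be insensitive to this. I expect this to follow from the same mechanism that shows $\UDaut(R/F)$ and $\UDaut(\Frac(R)/F)$ are represented by the same group (cf.\ \cite[Corollary 2.12]{dyckerhoff2008inverse} and the discussion in \Cref{section:pv}): an automorphism of $M \otimes_F K$ compatible with the $\bphi$-structure and commuting with $\delta$ must preserve the $R$-sublattice generated by $M$ because the relevant structure constants lie in $R$, hence restricts to an automorphism of $M \otimes_F R$. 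Once that representability comparison is nailed down, everything else is a routine transcription of \Cref{section:cohom}, so the write-up can be short: state that the hypothesis on $G$ transfers, invoke the field-version Torsor Theorem and \Cref{prop:pv-ff}, and note that \Cref{construction:F}--\Cref{thm:F-G-inverse} apply mutatis mutandis with $S = K$.
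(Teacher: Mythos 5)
Your high-level strategy---reducing to \Cref{thm:F-G-inverse} with $S=R$ by comparing $\TF(R/F,M)$ with $\TF(K/F,M)$---is exactly the paper's, but both of your routes for executing it contain the same genuine gap. Your ``alternative, cleaner'' route fails at step (ii): $K/F$ is \emph{not} a $\delta$-$(C[\Gamma]\otimes_C K)$-Hopf-Galois extension. The Torsor Theorem gives $K\otimes_F R\cong K\otimes_C C[\Gamma]$, not $K\otimes_F K\cong K\otimes_C C[\Gamma]$; whenever $R\subsetneq K$ (i.e.\ $\dim\Gamma>0$) the latter is false, since $K\otimes_F K$ is a proper localization of $K\otimes_C C[\Gamma]$, and the coaction $\Delta_R\colon R\to R\otimes_C C[\Gamma]$ does not even extend to a coaction on $K$. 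This is precisely why \Cref{thm:tf-over-fields} is a separate theorem rather than the case $S=K$ of \Cref{thm:F-G-inverse}, and it also undermines your primary route: both your surjectivity and your injectivity arguments for the comparison map invoke ``$\mathcal F$ applied over $K$,'' i.e.\ \Cref{construction:F} and \Cref{conversion} with $S=K$, which is exactly the machinery that is unavailable over $K$. In particular the mixed identity $\mathcal G_R\circ\mathcal F_K=\id$ that your surjectivity argument implicitly uses is never established.

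The paper closes the gap with no cohomology over $K$ at all: it shows directly that the natural map $i\colon\TF(R/F,M)\to\TF(K/F,M)$ is a bijection, because an isomorphism $N\otimes_F K\cong N'\otimes_F K$ of $\bphi_K$-objects restricts to an isomorphism $N\otimes_F R\cong N'\otimes_F R$. (The paper cites the $\delta$-faithful flatness of $K/R$ from \Cref{prop:pv-ff}; the underlying mechanism is that $R$ is $\delta$-simple, so the $\delta$-ideal of denominators of such an isomorphism is all of $R$---the same Dyckerhoff-style argument you correctly identify when worrying about transferring the representability hypothesis.) Once $i$ is a bijection, the map \eqref{eq:tf-field} is simply $i^{-1}$ followed by the bijection $\TF(R/F,M)\to H^1_{\delta}(\Gamma,G)$ of \Cref{thm:F-G-inverse}, using $\Dgal(R/F)=\Dgal(K/F)$; no representability statement over $K$ and no cocycle construction over $K$ is needed. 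Replacing your cocycle-based justification of the comparison by this restriction argument turns your outline into the paper's proof.
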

\begin{proof}
We have a commutative diagram
\[
\begin{tikzcd}
\TF(R/F,M) \ar[d,"\cong"']\ar[r,"i"]& \TF(K/F,M)\ar[d]\\
H^{1}_{\delta}(\Dgal(R/F),M)\ar[r,"\cong"'] & H^{1}_{\delta}(\Dgal(K/F),M).
\end{tikzcd}
\]
where $i$ is the natural inclusion viewing an $(R/F)$-twisted form of $M$ as a $(K/F)$-twisted form of $M$.
The left vertical map is bijective by \Cref{thm:F-G-inverse}.
The bottom map is bijective as $\Dgal(R/F)=\Dgal(K/F)$.
By \Cref{prop:pv-ff}, $K/R$ is $\delta$-faithfully flat. Thus an isomorphism $N_K\cong N'_{K}$ restricts to an isomorphism $N_R\cong N'_{R}$ for any two $\delta$-$F$-modules $N$ and $N'$. 
This makes $i$ a bijection.
Thus the right vertical map in \eqref{eq:tf-field} is also a bijection.
\end{proof}

We can classify the $(F^{\PV}/F)$-twisted forms from \Cref{ex-twisted-forms} by the following cohomology set. Any Picard-Vessiot field extension $L/F$ with a Picard-Vessiot subextension $K/F$ gives a map $\Dgal(L/F)\to \Dgal(K/F)$ which in turn induces a map
\begin{equation}\label{eq:H1-maps}
    H^{1}_{\delta}(\Dgal(K/F),G)\to H^{1}_{\delta}(\Dgal(L/F),G).
\end{equation}

The maps of the form \eqref{eq:H1-maps} forms a direct system. We call its direct limit 
$$H^{1}_{\delta}(F,G) = \varinjlim H^{1}_{\delta}(\Dgal(L/F),G)$$
the \emph{absolute cohomology set} over $F$ with values in $G$.

\begin{proposition}\label{prop:absolute-cohom}
Let $\pmb{\Phi}$ be a type and $M$ a $\pmb{\Phi}$-object over $F$. Suppose that the automorphism group of $M\otimes_{F} F^{\PV}$ is represented by a linear algebraic group $G$ over $C$. Suppose  also that the map
\begin{equation}\label{eq:tf-limit}
    \varinjlim \TF(K/F,M)\to \TF(F^{\PV}/F,M)
\end{equation}
induced by inclusion maps $\TF(K/F,M)\to \TF(F^{\PV}/F,M)$, where $K/F$ are Picard-Vessiot extensions over $F$, is a bijection. Then there is a bijection of the two sets $\TF(F^{\PV}/F, M)$ and $H^{1}_{\delta}(F, G)$. 
\end{proposition}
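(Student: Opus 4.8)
The plan is to combine the field-level bijection of \Cref{thm:tf-over-fields} with a passage to the direct limit over all Picard-Vessiot subextensions $K/F$, exactly as \eqref{eq:H1-maps} and \eqref{eq:tf-limit} are set up to allow. First I would observe that the automorphism-group representability hypothesis descends: since $M\otimes_F F^{\PV}$ has automorphism functor represented by $G$, and $F^{\PV}=\varinjlim K$ as $K$ ranges over Picard-Vessiot extensions, the automorphism functor of $M\otimes_F R_K$ is also represented by $G$ for every Picard-Vessiot ring $R_K$ with $\Frac(R_K)=K$, because constants do not change along $R_K\hookrightarrow K\hookrightarrow F^{\PV}$ and $\underline{\Aut}$ is determined by its value on the generic situation (this is the content lurking in \Cref{lemma:automorphism_split} and \Cref{assumption}). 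Hence for each such $K$, \Cref{thm:tf-over-fields} applies and gives a bijection $\TF(K/F,M)\xrightarrow{\ \sim\ }H^{1}_{\delta}(\Dgal(K/F),G)$.

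Second, I would check that these bijections are compatible with the transition maps on both sides, so that they assemble into a bijection of direct limits. On the cohomology side the transition maps are precisely \eqref{eq:H1-maps}, induced by the surjections $\Dgal(L/F)\twoheadrightarrow\Dgal(K/F)$ whenever $K\subseteq L$; on the twisted-forms side they are the inclusion maps $\TF(K/F,M)\to\TF(L/F,M)$ coming from extension of scalars along $K\hookrightarrow L$. Naturality of the construction $\mathcal F$ (it sends $(N,\varphi)$ to the cocycle $\sigma\mapsto\varphi\circ\boldsymbol\sigma\circ\varphi^{-1}\circ\boldsymbol\sigma^{-1}$, and pulling back along $\Dgal(L/F)\to\Dgal(K/F)$ is literally precomposition) makes the relevant squares commute; this is essentially the same diagram chase already performed in the proof of \Cref{thm:tf-over-fields}, now run uniformly in $K$. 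A filtered colimit of bijections between filtered diagrams of sets is a bijection, so $\varinjlim\TF(K/F,M)\xrightarrow{\ \sim\ }\varinjlim H^{1}_{\delta}(\Dgal(K/F),G)=H^{1}_{\delta}(F,G)$.

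Third, I would invoke the hypothesis \eqref{eq:tf-limit} that $\varinjlim\TF(K/F,M)\to\TF(F^{\PV}/F,M)$ is a bijection; composing with the colimit bijection of the previous step yields the desired bijection $\TF(F^{\PV}/F,M)\xrightarrow{\ \sim\ }H^{1}_{\delta}(F,G)$.

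The main obstacle I expect is the bookkeeping in the second step: verifying that the family of bijections $\{\mathcal F_K\}$ is genuinely natural with respect to the restriction maps $\Dgal(L/F)\to\Dgal(K/F)$, including that the action of $\Dgal(K/F)$ on $G$ used to define $H^{1}_{\delta}(\Dgal(K/F),G)$ pulls back correctly to the action of $\Dgal(L/F)$. One has to track how the automorphism $\boldsymbol\sigma=(1_S\otimes\sigma)\circ\Delta_S$ behaves under enlarging the Picard-Vessiot ring $R_K$ to $R_L$ — i.e., that the $\Dgal(L/F)$-action on $M\otimes_F R_L$ restricts, via the quotient $\Dgal(L/F)\to\Dgal(K/F)$, to the $\Dgal(K/F)$-action on $M\otimes_F R_K$ — which is a compatibility of the Galois correspondence for Picard-Vessiot extensions. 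Everything else (representability descent, colimit of bijections) is formal.
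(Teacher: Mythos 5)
Your proposal is correct and follows essentially the same route as the paper: apply \Cref{thm:tf-over-fields} to each Picard-Vessiot extension $K/F$ and pass to the direct limit, using the hypothesis \eqref{eq:tf-limit} to identify $\varinjlim\TF(K/F,M)$ with $\TF(F^{\PV}/F,M)$. The compatibility and representability checks you flag as the "main obstacle" are left implicit in the paper's two-line proof, so your elaboration is a faithful (if more detailed) version of the same argument.
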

\begin{proof}
For any Picard-Vessiot field extension $K/F$, \Cref{thm:tf-over-fields} gives a bijection $\TF(K/F,M)\cong H^{1}_{\delta}(\Dgal(K/F),G)$. Now take direct limits over the Picard-Vessiot field extensions $K/F$.
\end{proof}

\begin{corollary}[$\delta$-modules]\label{cor:diff-mods-as-twisted-forms}
There is a bijection 
$\diffmod_{n}(F)\cong H^{1}_{\delta}(F,\GL_{n})$.
\end{corollary}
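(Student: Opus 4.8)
The plan is to obtain this as a special case of \Cref{prop:absolute-cohom}. Take $M$ to be the trivial $\delta$-$F$-module of rank $n$, viewed as a $\bphi$-object of type $(F,\varnothing,\varnothing)$; thus $M = M^{\delta}\otimes_C F$ with $M^{\delta}\cong C^{n}$. By \Cref{ex-twisted-forms}(1) we already have $\diffmod_{n}(F) = \TF(F^{\PV}/F, M)$, so the entire task reduces to verifying the two hypotheses of \Cref{prop:absolute-cohom} for this $M$ and checking that the linear algebraic group it produces is $\GL_{n,C}$.

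For the representability hypothesis, I would use that $F^{\PV}$ has field of constants $C$, so that for any $C$-algebra $D$ the $\delta$-$F$-algebra $F^{\PV}\otimes_C D$ has constants $(F^{\PV}\otimes_C D)^{\delta} = D$. Applying \Cref{lemma:automorphism_split} with $R = F^{\PV}\otimes_C D$ then gives, functorially in $D$,
\[
\Aut^{\delta,\bphi}_{F^{\PV}\otimes_C D}\bigl((M\otimes_F F^{\PV})\otimes_C D\bigr)
= \Aut_{D}(M^{\delta}\otimes_C D) = \GL_{n}(D),
\]
so $\underline{\Aut}^{\delta,\bphi}(M\otimes_F F^{\PV})$ is represented by $G = \GL_{n,C}$, as required.

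For the direct-limit hypothesis, I would unwind the definitions. Since $M$ (hence each $M_K$) is a trivial rank-$n$ $\delta$-module, a class in $\TF(K/F,M)$ is the isomorphism class of a rank-$n$ $\delta$-$F$-module $N$ split by $K$, which by \Cref{trivial_module} means exactly that $K$ contains the Picard-Vessiot extension of $N$. Surjectivity of $\varinjlim_K \TF(K/F,M)\to \TF(F^{\PV}/F,M)$ is then immediate, each $[N]$ coming from its own Picard-Vessiot field $K_N$. For injectivity I would use that $F^{\PV}$ is a \emph{filtered} colimit of Picard-Vessiot fields (\Cref{section:pv}), so that $\GL_{n}(F^{\PV}) = \varinjlim_K \GL_{n}(K)$; hence an isomorphism $N\otimes_F F^{\PV}\cong N'\otimes_F F^{\PV}$ of $\delta$-modules, being given by a matrix in $\GL_{n}(F^{\PV})$ (whose derivative also has entries in the same Picard-Vessiot subfield, these being $\delta$-closed), is already defined over some Picard-Vessiot subfield $K$, forcing $[N]=[N']$ in $\varinjlim_K\TF(K/F,M)$. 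With both hypotheses in place, \Cref{prop:absolute-cohom} gives $\TF(F^{\PV}/F,M)\cong H^{1}_{\delta}(F,\GL_{n})$, and combining with \Cref{ex-twisted-forms}(1) yields $\diffmod_{n}(F)\cong H^{1}_{\delta}(F,\GL_{n})$.

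The only point needing real care is the direct-limit hypothesis, whose injectivity half rests on the filteredness of the system of Picard-Vessiot fields over $F$ inside $F^{\PV}$ — equivalently, that a finite compositum of Picard-Vessiot fields, being the splitting field of a direct sum of the corresponding equations, is again a Picard-Vessiot field; the auxiliary fact $(F^{\PV})^{\delta}=C$ also enters, in the representability step. Everything else is formal unwinding of the definitions of $\bphi$-objects and twisted forms.
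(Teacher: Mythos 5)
Your proposal is correct and follows essentially the same route as the paper: reduce via \Cref{ex-twisted-forms}(1) to $\TF(F^{\PV}/F,M)$ for the trivial module $M$, verify representability of the automorphism functor by $\GL_n$ using \Cref{lemma:automorphism_split}, check the direct-limit hypothesis, and invoke \Cref{prop:absolute-cohom}. If anything, you are more thorough than the paper, which only verifies surjectivity of the map \eqref{eq:tf-limit} (by descending the matrix entries of the trivializing isomorphism to a Picard-Vessiot subfield), whereas you also spell out injectivity via the filteredness of the system of Picard-Vessiot subfields of $F^{\PV}$.
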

\begin{proof}
By \Cref{ex-twisted-forms}, it suffices to show $\TF(F^{\PV}/F,M)\cong H^{1}_{\delta}(F,\GL_{n})$, where $M$ is the trivial $\delta$-module over $F$ of rank $n$.
We first verify that the hypotheses of \Cref{prop:absolute-cohom} hold. Let $D$ be a $C$-algebra. By \Cref{lemma:automorphism_split}, $\Aut^{\delta, \bphi}_{F \otimes_C D} (M \otimes_C D) = \Aut_D(M \otimes_C D) = \GL_n(D)$.

Next a $(F^{\PV}/F)$-twisted form $N$ of $M$ gives an isomorphism $\varphi\colon N\otimes_{F}F^{\PV}\to  M\otimes_{F}F^{\PV}$. Pick $F$-bases $\{n_{i}\}$ and $\{m_{j}\}$ for $N$ and $M$. Then $\varphi(n_{i}) = \sum c_{ij}m_{j}$ for some $c_{ij}\in F^{\PV}$. Therefore $\varphi$ restricts to an isomorphism $N\otimes_{F}K\cong M\otimes_{F}K$, where $K/F$ is the Picard-Vessiot extension in $F^{\PV}$ generated by the entries $c_{ij}$ over $F$. Thus \eqref{eq:tf-limit} is surjective. 

Since all hypotheses are verified, \Cref{prop:absolute-cohom} now gives the desired bijection. 
\end{proof}

\begin{corollary}[$\delta$-torsors]\label{cor:diff-tors-as-twisted-forms}
Let $G$ be a linear algebraic group over $C$ and consider $G_F$ as a trivial $\delta$-$G_{F}$-torsor. Then there is a bijection 
$\delta\text{-}G\tors(F)\cong H^{1}_{\delta}(F,G)$.
\end{corollary}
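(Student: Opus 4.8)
The plan is to run the argument of \Cref{cor:diff-mods-as-twisted-forms} with $\delta$-torsors in place of $\delta$-modules and \Cref{prop:split_torsor} in place of \Cref{trivial_module}. First I would invoke \Cref{ex:tors-twisted-forms}, which already identifies $\delta\text{-}G\tors(F)$ with $\TF(F^{\PV}/F, G_F)$, where $G_F$ is viewed as a $\bphi$-object through its coordinate ring $F[G] = F\otimes_C C[G]$, regarded as the trivial $\delta$-$C[G]$-Hopf-Galois extension of $F$ as in \Cref{subsection:phi-structure}. So it is enough to exhibit a bijection $\TF(F^{\PV}/F, G_F)\cong H^{1}_{\delta}(F,G)$, and this will follow from \Cref{prop:absolute-cohom} once its two hypotheses are in place.

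For the representability hypothesis I would apply \Cref{lemma:automorphism_split} with $M = F[G]$, whose constant part is $M^{\delta} = C[G]$, and with $R = F^{\PV}\otimes_C D$ for a varying $C$-algebra $D$. Since $(F^{\PV})^{\delta} = C$ and constants commute with the flat base change $-\otimes_C D$, one gets $R^{\delta} = D$, so the lemma gives $\Aut^{\delta,\bphi}_{F^{\PV}\otimes_C D}\big((M\otimes_F F^{\PV})\otimes_C D\big) = \Aut_D(C[G]\otimes_C D)$. The right-hand side is the automorphism group of the trivial $G$-torsor over $D$, namely $G(D)$ acting by left translation (this is the computation underlying the fact, recorded after \Cref{lemma:automorphism_split}, that a trivial $\delta$-$G_F$-torsor has $\Daut_F$ equal to $G(C)$). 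Hence $\underline{\Aut}^{\delta,\bphi}(M\otimes_F F^{\PV})$ is represented by $G$.

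For the hypothesis that \eqref{eq:tf-limit} is a bijection, I note first that each transition map $\TF(K/F, G_F)\to\TF(F^{\PV}/F, G_F)$ is simply the inclusion of one set of isomorphism classes of $\bphi_F$-objects into another, so \eqref{eq:tf-limit} is injective for free. Surjectivity is exactly the content of \Cref{prop:split_torsor}: by \Cref{ex:tors-twisted-forms} an $(F^{\PV}/F)$-twisted form of $G_F$ is an arbitrary $\delta$-$G_F$-torsor $X$, and \Cref{prop:split_torsor} produces a simple $\delta$-quotient $R$ of $F[X]$ whose fraction field $K = \Frac(R)$ is a Picard-Vessiot extension that splits $X$, so $[X]$ already lies in $\TF(K/F, G_F)$. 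With both hypotheses verified, \Cref{prop:absolute-cohom} gives the bijection $\TF(F^{\PV}/F, G_F)\cong H^{1}_{\delta}(F,G)$, which together with \Cref{ex:tors-twisted-forms} is the claim.

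The only step carrying real content is this surjectivity — that a $\delta$-torsor is split already by an honest Picard-Vessiot subextension and not merely by the unwieldy closure $F^{\PV}$ — but \Cref{prop:split_torsor} has supplied it, ultimately via the structure theory of induced $\delta$-torsors in \Cref{thm:comprehensive_induced_torsor}. Everything else is bookkeeping: matching the torsor-theoretic notion of twisted form with the Hopf-Galois $\bphi$-object notion (set up in \Cref{subsection:tf}) and the automorphism-functor computation above, so I expect no genuine obstacle beyond correctly quoting \Cref{prop:split_torsor}, \Cref{lemma:automorphism_split}, and \Cref{prop:absolute-cohom}.
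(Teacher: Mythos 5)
Your proposal is correct and follows the paper's own proof essentially step for step: reduce to $\TF(F^{\PV}/F, G_F)$ via \Cref{ex:tors-twisted-forms}, verify representability of the automorphism functor via \Cref{lemma:automorphism_split}, check surjectivity of \eqref{eq:tf-limit}, and conclude with \Cref{prop:absolute-cohom}. The only (harmless) divergence is in the surjectivity step: you quote \Cref{prop:split_torsor} to produce a Picard--Vessiot splitting field $K=\Frac(R)$ directly, whereas the paper restricts the given isomorphism over $F^{\PV}$ to a finitely generated Picard--Vessiot subextension as in the proof of \Cref{cor:diff-mods-as-twisted-forms}; both arguments are valid, and yours is consistent with the fact that \Cref{ex:tors-twisted-forms} already rests on \Cref{prop:split_torsor}.
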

\begin{proof}
By \Cref{ex:tors-twisted-forms}, it suffices to show $\TF(F^{\PV}/F, G_{F})\cong H^{1}_{\delta}(F,G)$.
Let $D$ be a $C$-algebra. By \Cref{lemma:automorphism_split}, $\Aut^{\delta, \bphi}_{F \otimes_C D}(G_{F\otimes_C D}) = \Aut_D(G_D) = G(D)$.

Next if $X$ is a $(F^{\PV}/F)$-twisted form of $G_{F}$, we get an isomorphism of $\delta$-Hopf-Galois extensions $F[X]\otimes_{F}F^{\PV}\cong F[G]\otimes_{F}F^{\PV}$. As in the proof of \Cref{cor:diff-mods-as-twisted-forms}, this isomorphism restricts to one over a Picard-Vessiot field extension $K/F$. Therefore $X$ is a $(K/F)$-twisted form of $G_{F}$. We conclude that \eqref{eq:tf-limit} is surjective, with $M = F[G]$.

Since all hypotheses are verified, \Cref{prop:absolute-cohom} now gives the desired bijection. 
\end{proof}

\begin{remark}\label{rem:torsor-cocycle}
We can describe the bijection in \Cref{cor:diff-tors-as-twisted-forms} explicitly. A $\delta$-$G$-torsor $X$ in $\TF(F^{\PV}/F, G_{F})$ lies in $\TF(K/F, G_{F})$ for some Picard-Vessiot field extension $K/F$. For any such $K$, $X_{K}\cong G_{K}$ and so has a $\delta$-$K$-point $x$. Any $\sigma\in \Dgal(K/F)$ defines an automorphism on $X(K)$, and we set $a_{\sigma}\in G(K)$ to be the unique element such that $\sigma(x) = x\cdot a_{\sigma}$ in $X(K)$. We can check that $\sigma\mapsto a_{\sigma}$ defines a cocycle $a\in H^{1}_{\delta}(\Dgal(K/F), G)$, and so we have a map 
\[
    \TF(F^{\PV}/F, G_F)\to H^{1}_{\delta}(\Dgal(K/F), G).
\]
The cocycle constructed is compatible with the maps 
\[
    H^{1}_{\delta}(\Dgal(K/F), G)\to H^{1}_{\delta}(\Dgal(L/F), G).
\]
Thus we get a map 
\[
    \TF(F^{\PV}/F, G_F)\to H^{1}_{\delta}(F,G).
\]
\end{remark}

Cohomological computations allows us to deduce some facts about Picard-Vessiot extensions.

\begin{proposition}
    Let $U$ be a unipotent group and $\Gamma$ a reductive group over $C$. If a $\delta$-$U_F$-torsor $X$ is split by a Picard-Vessiot extension $R/F$ with $\Dgal(R/F)=\Gamma$, then $X$ is a trivial $\delta$-$U_F$-torsor.
\end{proposition}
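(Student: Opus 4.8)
Set $K\coloneqq\Frac(R)$, a Picard-Vessiot field with $\Dgal(K/F)=\Gamma$. The plan is to realize $X$ as a twisted form of the trivial torsor and feed it into \Cref{thm:tf-over-fields}. View $U_F$, with its trivial $\delta$-structure, as a $\bphi$-object: the $\delta$-$F[U]$-Hopf-Galois extension whose underlying $\delta$-$F$-module is the trivial $\bphi_F$-object $F[U]=C[U]\otimes_C F$. By \Cref{lemma:automorphism_split}, its automorphism functor $D\mapsto\Aut^{\delta,\bphi}_{F\otimes_C D}(U_{F\otimes_C D})=\Aut_D(C[U]\otimes_C D)$ is represented by $U$. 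Hence \Cref{thm:tf-over-fields} applies with $M=F[U]$ and $G=U$, giving a bijection $\TF(K/F,U_F)\cong H^{1}_{\delta}(\Gamma,U)$ under which the trivial cocycle corresponds to the trivial torsor $U_F$. Since $K$ splits $X$, the torsor $X_K$ is trivial, so $X$ is a $(K/F)$-twisted form of $U_F$ and determines a class in $H^{1}_{\delta}(\Gamma,U)$. It therefore suffices to show $H^{1}_{\delta}(\Gamma,U)$ is a single point.

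Next I would unwind the $\Gamma$-action on $U$ coming from \Cref{assumption}. Writing $M\otimes_F K=C[U]\otimes_C K$, the operator $\boldsymbol{\sigma}$ attached to $\sigma\in\Gamma(C)$ acts only through the factor $K$, whereas every $\varphi\in U(C)=\Aut^{\delta,\bphi}(M\otimes_F K)$ is, by \Cref{lemma:automorphism_split}, defined over $C$ and so acts only through the factor $C[U]$. These two actions commute, so $\sigma(\varphi)=\boldsymbol{\sigma}\circ\varphi\circ\boldsymbol{\sigma}^{-1}=\varphi$; that is, $\Gamma$ acts trivially on $U$. Consequently the cocycle condition $a_{\sigma\tau}=a_{\sigma}\cdot\sigma(a_{\tau})$ becomes $a_{\sigma\tau}=a_{\sigma}\cdot a_{\tau}$, so a $1$-cocycle $a\colon\Gamma\to U$ is precisely a homomorphism of algebraic groups over $C$, and equivalence of cocycles is conjugation by $U(C)$.

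To finish, I would invoke the structure theory of algebraic groups: any homomorphism of algebraic groups $a\colon\Gamma\to U$ is trivial, since its image $a(\Gamma)\cong\Gamma/\ker a$ is a quotient of a reductive group, hence reductive, and is also a closed subgroup of the unipotent group $U$, hence unipotent; in characteristic zero a group that is both reductive and unipotent is trivial, so $a(\Gamma)=\{e\}$. Thus $H^{1}_{\delta}(\Gamma,U)$ consists only of the class of the trivial cocycle, and therefore $X\cong U_F$ as $\delta$-$U_F$-torsors.

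The step that needs genuine care is the identification of the $\Gamma$-action on $U=\underline{\Aut}^{\delta,\bphi}(U_F)$ as trivial. It is immediate once the decompositions $M\otimes_F K=C[U]\otimes_C K$, $\boldsymbol{\sigma}=\id_{C[U]}\otimes\boldsymbol{\sigma}$, and $\varphi=\varphi_0\otimes\id_K$ (for $\varphi_0\in\Aut_C(C[U])$) are made explicit, but since the triviality of the whole cohomology set—and hence the entire proposition—rests on it, I would spell it out rather than leave it to the reader.
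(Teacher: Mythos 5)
Your proof is correct, but it takes a genuinely different route from the paper's. The paper never identifies the $\Gamma$-action on $U$: it filters $U$ by a central normal series with successive quotients $\mathbb{G}_a$, identifies $H^1_{\delta}(\Gamma,\mathbb{G}_a)$ with Hochschild cohomology, invokes Kemper's theorem \cite{kemper2000characterization} that this vanishes for (linearly) reductive $\Gamma$, and then runs the long exact sequence to deduce $H^{1}_{\delta}(\Gamma,U)=0$ by d\'evissage. You instead observe that, because the coefficient object here is the \emph{trivial} torsor $U_F=\Spec(C[U]\otimes_C F)$, \Cref{lemma:automorphism_split} forces every $\varphi\in\Aut^{\delta,\bphi}(M\otimes_F K)$ to have the form $\varphi_0\otimes\id_K$ and hence to commute with $\boldsymbol{\sigma}=\id_{C[U]}\otimes\boldsymbol{\sigma}$; the conjugation action of $\Gamma$ on $U$ is therefore trivial, cocycles become algebraic group homomorphisms $\Gamma\to U$, and these all vanish since their image would be simultaneously reductive (a quotient of $\Gamma$) and unipotent (a closed subgroup of $U$). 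Both arguments are sound. Yours is more elementary and self-contained — no Hochschild cohomology, no appeal to Kemper, no nonabelian long exact sequence — and you are right that the triviality of the action is the one step worth spelling out; it is exactly the content of the proof of \Cref{lemma:automorphism_split}. What the paper's d\'evissage buys in exchange is robustness: it proves $H^{1}_{\delta}(\Gamma,U)=0$ for an arbitrary compatible $\Gamma$-action on $U$, so it would apply verbatim to twisted forms of a $\bphi$-object with unipotent automorphism group that is not itself a trivial $\bphi$-object, whereas your reduction of cocycles to group homomorphisms is special to the trivially-acted case.
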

\begin{proof}
    By viewing $\mathbb{G}_a$ as a $\Gamma$-module, we may identify $H^1_{\delta}(\Gamma, \mathbb{G}_a)$ with the corresponding Hochschild cohomology set.
    By \cite[Corollary 14.15, Proposition 14.21]{milne_2017}, $U$ admits a central normal series 
    \begin{equation*}
        U = U_n \supset U_{n-1} \supset \cdots \supset U_1 \supset U_0 = \{e\},
    \end{equation*} with $U_{i}/U_{i-1} \cong \mathbb{G}_a$. The exact sequence of $\Gamma$-module
    \begin{equation*}
        1 \longrightarrow U_{i-1} \longrightarrow U_i \longrightarrow \mathbb{G}_a \longrightarrow 0
    \end{equation*} induces a long exact sequence
    \begin{equation*}
        \cdots \longrightarrow H^1(\Gamma, U_{i-1}) \longrightarrow H^1(\Gamma, U_{i})\longrightarrow H^1(\Gamma, \mathbb{G}_a).
    \end{equation*}
    By \cite[Proposition 1]{kemper2000characterization},
    % Also Mil17a, page 311, Proposition 15.15
    we have $H^1_{\delta}(\Gamma, \mathbb{G}_a) = 0$, so an induction on $i$ shows that $H^{1}_{\delta}(\Gamma, U_{i}) = 0$ for $i=1,\ldots,n-1$. In particular, $H^{1}_{\delta}(\Gamma, U)=0$, so $X$ is trivial.
\end{proof}

This proposition in particular implies that a differential equation with unipotent $\delta$-Galois group cannot be fully solved over a Picard-Vessiot extension with a reductive $\delta$-Galois group unless it is already fully solved over the base field.
    
\section{Differential Central Simple Algebras}
\label{section:DCSA}

\cite{gupta2022splitting} previously showed that the $\delta$-splitting degree of $\delta$-quaternion algebras is at most three. This universal bound was also proven to be optimal for certain $\delta$-fields. In this section, we generalize this result to all $\delta$-central simple $F$-algebras when $C$ is algebraically closed. For the rest of the section, we assume any $\delta$-field extension $L$ of $F$ has the same field of constants as $F$.

We first recall some definitions and properties from \cite{Juan_Magid08}.
A \emph{$\delta$-central simple algebra} is a pair $(A, \delta)$, where $A$ is a central simple $F$-algebra and $\delta$ is a derivation on $A$ extending $\delta$ on $F$. By \cite[Proposition 1]{Juan_Magid08}, the derivations on the matrix algebra $\M_n(F)$ are given by $\delta_P$, where $P$ is an $n \times n$ traceless matrix and
\begin{equation*}
    \delta_P(x) = (x)' + Px - xP
\end{equation*}
for all $x \in \M_n(F)$. Here $(x)'$ denotes the entry-wise derivation on $x$.
We write $\DCSA_{n}(F)$ for the set of isomorphism classes of $\delta$-central simple algebras of degree $n$ over $F$.

The \emph{$\delta$-splitting degree} of a $\delta$-central simple algebra $(A, \delta)$ is defined as
\begin{equation*}
    \deg_{\spl}^{\delta}(A, \delta) = \min\{\trdeg_F(L)\mid (A, \delta) \otimes_F L \cong (\M_n(L), {\,}')\}.
\end{equation*}

\begin{theorem}\label{differential_pgln}
    There is a bijection 
    \begin{equation*}
        \DCSA_n(F) \longleftrightarrow \difftors{\PGL_{n,F}}(F).
    \end{equation*}
\end{theorem}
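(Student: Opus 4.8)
The plan is to realize $\DCSA_n(F)$ and $\difftors{\PGL_{n,F}}(F)$ as twisted forms of the same trivial object and then invoke the absolute cohomology machinery of \Cref{prop:absolute-cohom}, exactly as was done for $\delta$-modules and $\delta$-torsors in \Cref{cor:diff-mods-as-twisted-forms} and \Cref{cor:diff-tors-as-twisted-forms}. First I would package a $\delta$-central simple algebra of degree $n$ as a $\bphi$-object: take the type over $C$ recording an $n^2$-dimensional vector space together with a multiplication map $\Phi\colon V^{\otimes 2}\to V$ (the algebra structure), so that $(\M_n(F),{\,}')$ with its standard derivation is the \emph{trivial} $\bphi_F$-object $M = M^\delta\otimes_C F$ where $M^\delta = \M_n(C)$. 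The key input is the cited result of \cite{Juan_Magid08} that every derivation on $\M_n(F)$ extending $\delta$ on $F$ is of the form $\delta_P(x) = (x)' + Px - xP$ for a traceless $P$; combined with the classical fact that every central simple $F$-algebra of degree $n$ is an $\overline{F}$-form of $\M_n$, this shows that every $\delta$-central simple algebra of degree $n$ becomes isomorphic to $(\M_n, {\,}')$ after base change to a suitable $\delta$-field extension, and in fact over a \emph{Picard-Vessiot} extension — a fundamental matrix for the gauge-type equation governing $\delta_P$ provides the splitting, so one gets $\DCSA_n(F) = \TF(F^{\PV}/F, (\M_n(F),{\,}'))$, analogously to \Cref{ex-twisted-forms}.

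Next I would compute the automorphism group of the trivial object. By the Skolem–Noether theorem, $\underline{\Aut}_C(\M_n(C)) = \PGL_{n}$ as a group functor over $C$; then \Cref{lemma:automorphism_split} gives $\Aut^{\delta,\bphi}_{F\otimes_C D}(\M_n(F)\otimes_C D) = \Aut_D(\M_n(D)) = \PGL_n(D)$, so the automorphism functor of $M\otimes_F F^{\PV}$ is represented by the linear algebraic group $G = \PGL_n$ over $C$. This is the hypothesis needed to apply \Cref{prop:absolute-cohom}. One also needs the surjectivity of the direct-limit map \eqref{eq:tf-limit}: given a $(F^{\PV}/F)$-twisted form $(A,\delta)$, an isomorphism $(A,\delta)\otimes_F F^{\PV}\cong (\M_n(F^{\PV}),{\,}')$ is determined by finitely many elements of $F^{\PV}$, hence factors through some Picard-Vessiot field extension $K/F$, so $(A,\delta)$ is a $(K/F)$-twisted form — the same finiteness argument as in \Cref{cor:diff-mods-as-twisted-forms}. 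With both hypotheses checked, \Cref{prop:absolute-cohom} yields $\DCSA_n(F) \cong H^1_\delta(F,\PGL_n)$, and \Cref{cor:diff-tors-as-twisted-forms} yields $\difftors{\PGL_{n,F}}(F) \cong H^1_\delta(F,\PGL_n)$; composing gives the desired bijection.

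The main obstacle I anticipate is not the cohomological bookkeeping but the \emph{descent-theoretic input}: verifying that a $\delta$-central simple algebra really is a twisted form of $(\M_n(F),{\,}')$ over a Picard-Vessiot extension, rather than merely over $\overline{F}$ or over some non-differential extension. One must check that the splitting field furnished by Wedderburn/Skolem–Noether can be taken compatibly with the derivation — i.e., that after splitting $A$ as $\M_n$ over an algebraic extension, the induced derivation is of the form $\delta_P$ and the equation $Y' = PY$ (or the associated gauge problem) can be solved over a Picard-Vessiot extension sitting inside $F^{\PV}$. Here I would lean on the constants of $F$ being algebraically closed (so algebraic extensions are already trivial as $\delta$-extensions, having the same constants) together with the structure result from \cite{Juan_Magid08}, reducing everything to the solvability of a linear differential equation, which is exactly what Picard-Vessiot theory guarantees. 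Once that identification is in place, the rest is a formal consequence of the framework already assembled in \Cref{section:cohom}.
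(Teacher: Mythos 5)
Your proposal is correct and follows essentially the same route as the paper: identify $\DCSA_n(F)$ with $\TF(F^{\PV}/F,(\M_n(F),{\,}'))$ via \cite[Theorem 1]{Juan_Magid08}, compute the automorphism group as $\PGL_n$ using \Cref{lemma:automorphism_split}, and then apply \Cref{prop:absolute-cohom} together with \Cref{cor:diff-tors-as-twisted-forms}. Your explicit verification of the surjectivity of \eqref{eq:tf-limit} is a hypothesis the paper's proof leaves implicit, so if anything your write-up is slightly more complete.
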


\begin{proof}
    Let $(A, \delta)$ be a $\delta$-central simple $F$-algebra of degree $n$. By \cite[Theorem 1]{Juan_Magid08}, we have $(A, \delta)\otimes_{F} L \cong (\M_n(L), {\,}')$ for some Picard-Vessiot extension $L/F$. 
    Therefore, the $\delta$-central simple algebras of degree $n$ are twisted forms of $(\M_n(F^{\PV}), {\,}')$. In other words,
    \[
        \DCSA_{n}(F) = \TF(F^{\PV}/F, (\M_n(F), {\,}')).
    \]
    Since $(\M_{n}(F^{\PV}),{\,}') = (\M_{n}(C),{\,}')\otimes_{C}F^{\PV}$, \Cref{lemma:automorphism_split} gives 
    \[
        \Aut^{\delta}_{F^{\PV}}(\M_n(F^{\PV}), {\,}') = \Aut_C(\M_n(C)) = \PGL_n(C).\]
    By \Cref{prop:absolute-cohom}, we have the bijection 
    \begin{equation*}
        \TF(F^{\PV}/F, (\M_n(F), {\,}')) \longleftrightarrow H^1_{\delta}(F, \PGL_n(C)).
    \end{equation*}
    Combined with \Cref{cor:diff-tors-as-twisted-forms}, we get the desired bijection
    \begin{equation*}
        \DCSA_n(F) \longleftrightarrow \difftors{\PGL_{n,F}}(F).
    \end{equation*}
\end{proof}

Theorems \ref{prop:split_torsor} and \ref{differential_pgln} together give us a description of the $\delta$-splitting degree for each $\delta$-central simple algebra, leading to the following universal bound for $\delta$-splitting degree for all $\delta$-central simple $F$-algebras.

\begin{corollary}\label{cor:bound_DCSA}
    Let $(A, \delta)$ be a $\delta$-central simple $F$-algebra of degree $n$. Then 
    \[
    \deg_{\spl}^{\delta}(A, \delta) \leq \dim(\PGL_n) = n^2 - 1.
    \] This bound is optimal when $\PGL_n$ appears as a $\delta$-Galois group over $F$.
\end{corollary}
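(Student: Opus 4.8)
The plan is to combine the two previously-established theorems directly. By \Cref{differential_pgln}, a $\delta$-central simple $F$-algebra $(A,\delta)$ of degree $n$ corresponds to a $\delta$-$\PGL_{n,F}$-torsor $X$; the correspondence is built so that an isomorphism $(A,\delta)\otimes_F L\cong (\M_n(L),{}')$ over a $\delta$-field extension $L$ translates into a trivialization $X_L\cong \PGL_{n,L}$ over $L$ (both sides say the twisted form splits over $L$). Hence the $\delta$-splitting degree of $(A,\delta)$, defined as the minimal $\trdeg_F(L)$ over splitting fields $L$, equals $\deg_{\spl}^{\delta}(X)$ for the associated torsor $X$.

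Next I would apply \Cref{prop:split_torsor}: taking any simple $\delta$-quotient $R$ of $F[X]$ with $\delta$-Galois group $H$, we get $\deg_{\spl}^{\delta}(X)=\dim(H)$, and since $H$ embeds as a closed subgroup of $\PGL_n$ we have $\dim(H)\le\dim(\PGL_n)=n^2-1$. Chaining these equalities and inequality gives
\[
    \deg_{\spl}^{\delta}(A,\delta) \;=\; \deg_{\spl}^{\delta}(X) \;=\; \dim(H) \;\le\; \dim(\PGL_n) \;=\; n^2-1,
\]
which is the claimed bound. One small point to check is that \Cref{prop:split_torsor} is stated under the running hypothesis that every $\delta$-field extension $L/F$ has the same constants as $F$; but that hypothesis is also in force in this section, so there is no gap.

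For optimality, suppose $\PGL_n$ occurs as the $\delta$-Galois group of some Picard-Vessiot extension $K/F$; let $R\subseteq K$ be the corresponding Picard-Vessiot ring and $X\coloneqq\Spec(R)$, which by \Cref{prop:split_torsor} (applied with $G=H=\PGL_n$) is a $\delta$-$\PGL_{n,F}$-torsor with $\deg_{\spl}^{\delta}(X)=\dim(\PGL_n)=n^2-1$. Pulling $X$ back through the bijection of \Cref{differential_pgln} produces a $\delta$-central simple algebra $(A,\delta)$ of degree $n$ whose splitting degree is exactly $n^2-1$, so the bound is attained. I expect the only real subtlety to be bookkeeping: making sure that ``splits'' on the algebra side and ``splits'' (i.e.\ becomes a trivial $\delta$-torsor) on the torsor side genuinely match under the bijection of \Cref{differential_pgln}, so that the two notions of $\delta$-splitting degree coincide rather than merely bound one another; this follows by tracing through the construction in the proof of \Cref{differential_pgln}, where the isomorphism $(A,\delta)\otimes_F L\cong(\M_n(L),{}')$ and the $\delta$-$\PGL_{n,L}$-torsor trivialization are two descriptions of the same twisted-form datum over $L$.
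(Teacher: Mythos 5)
Your proposal is correct and follows essentially the same route as the paper: the bound comes from combining \Cref{differential_pgln} with \Cref{prop:split_torsor}, and optimality comes from taking the spectrum of a Picard-Vessiot ring with $\delta$-Galois group $\PGL_n$ as a simple $\delta$-$\PGL_{n,F}$-torsor achieving $\dim(\PGL_n)$. The paper states this very tersely; your version simply spells out the bookkeeping (in particular that splitting fields on the algebra side and on the torsor side match under the bijection), which is a reasonable elaboration rather than a different argument.
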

\begin{proof}
    The first claim follows from \Cref{prop:split_torsor} and \Cref{differential_pgln}. If $\PGL_n$ appears as a $\delta$-Galois group, there exists a Picard-Vessiot ring $R$ such that $Y \coloneqq \Spec(R)$ is a simple $\PGL_n$-torsor, whence the second claim also follows from \Cref{prop:split_torsor}. 
\end{proof}

\begin{remark}\label{sharp}
    When $F = (\mathbb{C}(t), d/dt))$, all linear algebraic groups appear as $\delta$-Galois groups (see \cite{TT79}) and therefore the universal bound above is optimal. 
    
    When $F = (\mathbb{C}((t)), t (\frac{d}{dt}))$, the structure of the universal Picard-Vessiot ring (\cite[Chapter 3]{vdPS03}) suggests that the Picard-Vessiot extensions over $\mathbb{C}((t))$ must be Liouvillian and therefore the $\delta$-Galois groups must be solvable (\cite[Chapter 1 Section 5]{vdPS03}). By a slight modification of Kovacic's proof of \cite[Proposition 20]{kovacic} using the derivation $t(\frac{d}{dt})$, we may show that all $\delta$-Galois groups must be in the form $\mathbb{G}_m^s \times \mathbb{G}_a^{\epsilon} \times G$, where $\epsilon \in \{0, 1\}$ and $G$ is finite and cyclic. A different treatment is also available in \cite{Vel23}. Hence, the universal bound give in \Cref{cor:bound_DCSA} can be improved. To obtain a sharper bound,  by \Cref{prop:split_torsor}, it suffices to find a connected closed subgroup of $\PGL_n$ of the maximal dimension that is of the above form. 

    A commutative linear algebraic group is isomorphic to the direct product of a torus and a commutative unipotent group (\cite[Corollary 16.15]{milne_2017}). Therefore, a connected subgroup $G$ of $\PGL_n$ that appears as a $\delta$-Galois group must be in the form $\mathbb{G}_m^s$ or $\mathbb{G}_m^s \times \mathbb{G}_a$ for some $s$ . 
    Here $s$ is bounded by $n - 1$, the dimension of the maximal torus of $\PGL_n$. 
    See \cite[Theorem 22.6(ii) and section 23.2]{Borel}. 
    However, $\mathbb{G}_m^{n-1} \times \mathbb{G}_a$ is not a subgroup of $\PGL_n$; 
    otherwise, $\mathbb{G}_m^{n-1} \times \mathbb{G}_a$ lifts to $\mathbb{G}_m^{n} \times \mathbb{G}_a$ as a subgroup of $\GL_n$, but this is impossible. Therefore, the universal bound is at most $n - 1$.       
\end{remark}

\bibliographystyle{alpha}

\begin{thebibliography}{BHHW18}

\bibitem[Art99]{artin1999noncommutative}
Michael Artin.
\newblock Noncommutative rings.
\newblock {\em Class Notes, fall}, pages 75--77, 1999.

\bibitem[BHHW18]{BHHW18}
Annette Bachmayr, David Harbater, Julia Hartmann, and Michael Wibmer.
\newblock {Differential embedding problems over complex function fields}.
\newblock {\em Documenta Mathematica}, 23:241--291, 2018.

\bibitem[Bor69]{Borel}
Armand Borel.
\newblock {\em Linear algebraic groups}.
\newblock Graduate text in mathematics. Springer New York, 2 edition, 1969.

\bibitem[Dyc08]{dyckerhoff2008inverse}
Tobias Dyckerhoff.
\newblock {The inverse problem of differential Galois theory over the field
  $\mathbb R(z)$}.
\newblock {\em arXiv preprint}, 2008.
\newblock \href{https://arxiv.org/abs/0802.2897}{arXiv: 0802.2897}.

\bibitem[GKS22]{gupta2022splitting}
Parul Gupta, Yashpreet Kaur, and Anupam Singh.
\newblock Splitting of differential quaternion algebras.
\newblock {\em arXiv preprint}, 2022.
\newblock \href{https://arxiv.org/abs/2210.02103}{arXiv: 2210.02103}.

\bibitem[JM08]{Juan_Magid08}
Lourdes Juan and Andy~R. Magid.
\newblock {Differential Central Simple Algebras and Picard-Vessiot
  Representations}.
\newblock {\em Proceedings of the American Mathematical Society},
  136(6):1911--1918, 2008.
\newblock \url{http://www.jstor.org/stable/20535372}.

\bibitem[Kem00]{kemper2000characterization}
Gregor Kemper.
\newblock A characterization of linearly reductive groups by their invariants.
\newblock {\em Transformation groups}, 5:85--92, 2000.

\bibitem[Kol73]{kolchin1973differential}
Ellis~Robert Kolchin.
\newblock {\em {Differential algebra \& algebraic groups}}, volume~54.
\newblock Academic press, 1973.

\bibitem[Kov69]{kovacic}
J.~Kovacic.
\newblock The inverse problem in the galois theory of differential fields.
\newblock {\em Annals of Mathematics}, 89(3), 1969.

\bibitem[Mag02]{magid2002picard}
Andy~R Magid.
\newblock {The Picard--Vessiot closure in differential Galois theory}.
\newblock {\em Banach Center Publications}, 58:151--164, 2002.
\newblock \url{https://eudml.org/doc/281834}.

\bibitem[MP23]{MP}
David Meretzky and Anand Pillay.
\newblock{Picard-Vessiot extensions, linear differential algebraic groups and their torsors}.
\newblock{\em arXiv preprint}, 2008.
\newblock\href{https://arxiv.org/abs/2307.14948}{arXiv:2307.14948}.

\bibitem[Mil17a]{milne_2017}
J.~S. Milne.
\newblock {\em {Algebraic Groups: The Theory of Group Schemes of Finite Type
  over a Field}}.
\newblock Cambridge Studies in Advanced Mathematics. Cambridge University
  Press, 2017.

\bibitem[Mil17b]{milne2017primer}
James Milne.
\newblock A primer of commutative algebra.
\newblock \url{https://www.jmilne.org/math/xnotes/CA402.pdf}, 2017.

\bibitem[MS21]{masuoka2021twisted}
Akira Masuoka and Yuta Shimada.
\newblock {Twisted Forms of Differential Lie Algebras over $\mathbb{C}(t)$
  Associated with Complex Simple Lie Algebras}.
\newblock {\em Arnold Mathematical Journal}, 7(1):107--134, 2021.

\bibitem[Nar12]{nardin2012essential}
Denis Nardin.
\newblock {The essential dimension of finite group schemes}.
\newblock Master's thesis, 2012.
\newblock
  \url{https://homepages.uni-regensburg.de/~nad22969/dispense/tesimagistrale.pdf}.

\bibitem[NW07]{nuss2007non}
Philippe Nuss and Marc Wambst.
\newblock {Non-abelian Hopf cohomology}.
\newblock {\em Journal of Algebra}, 312(2):733--754, 2007.
\newblock \url{https://doi.org/10.1016/j.jalgebra.2006.10.005}.

\bibitem[Sch90]{schneider1990principal}
Hans-J{\"u}rgen Schneider.
\newblock {Principal homogeneous spaces for arbitrary Hopf algebras}.
\newblock {\em Israel Journal of Mathematics}, 72(1-2):167--195, 1990.
\newblock \url{https://doi.org/10.1007/BF02764619}.

\bibitem[TT79]{TT79}
Carol Tretkoff and Marvin Tretkoff.
\newblock Solution of the inverse problem of differential galois theory in the
  classical case.
\newblock {\em American Journal of Mathematics}, 101(6):1327--1332, 1979.

\bibitem[vdPS03]{vdPS03}
Marcus van~der Put and Michael~F. Singer.
\newblock {\em {Galois Theory of Linear Differential Equations}}.
\newblock Grundlehren der mathematischen Wissenschaften. Springer Berlin,
  Heidelberg, 2003.

\bibitem[VI23]{Vel23}
Santiago Velazquez~Iannuzzelli.
\newblock {Local differential Galois groups}.
\newblock Master's thesis, University of Pennsylvania, 2023.

\end{thebibliography}

\medskip

\noindent Author information:

\medskip

\noindent Man Cheung Tsui: Department of Mathematics,
Florida State University,
Tallahassee, FL 32304,
USA.\\ email: {\tt mktsui@fsu.edu}

\medskip

\noindent Yidi Wang: Department of Mathematics, University of Pennsylvania, Philadelphia, PA 19104, USA.\\ email: {\tt yidiwang@math.upenn.edu}

\end{document}